\theoremstyle{plain}
\newtheorem{theorem}{Theorem}
\newtheorem{proposition}{Proposition}
\newtheorem{statement}{Assertion}
\newtheorem{corollary}{Corollary}
\newtheorem{lemma}{Lemma}
\theoremstyle{remark}
\newtheorem{remark}{Remark}
\theoremstyle{definition}
\newtheorem{definition}{Definition}
\begin{document}

\def\refname{Bibliography}

\translator{A. I. Shtern}

\subjclass{Primary 37A35, Secondary 28D05, 37A05, 37A50, 60G99}

\title{Universal adic approximation, invariant measures and scaled entropy}

\author{A.~M.~Vershik}
\address{St.~Petersburg Department of the Steklov 
\\
Mathematical Institute of the Russian Academy 
\\
of Sciences, St.~Petersburg
\\[0.7pt]
Department of Mathematics and Mechanics, 
\\
St.~Petersburg State University, St.~Petersburg
\\[0.7pt] 
Institute for Information Transmission Problems 
\\
of~the~Russian Academy of Sciences 
\\
(Kharkevich~Institute), Moscow}
\email{avershik@gmail.com}

\author{P.~B.~Zatitskii}
\address{D\'epartement de math\'ematiques et applications, 
\\
\'Ecole Normale Sup\'erieure, Paris
\\[0.7pt]
Department of Mathematics and Mechanics, 
\\
Chebyshev Laboratory, St.~Petersburg State 
\\
University, St.~Petersburg
\\[0.7pt]
St.~Petersburg Department of the Steklov 
\\
Mathematical Institute of the Russian Academy 
\\
of Sciences, St.~Petersburg} 
\email{pavelz@pdmi.ras.ru}


\maketitle

\markright{Universal adic approximation, invariant measures
and scaled entropy}

\thanks{This research was financially supported
by the Russian Science Foundation
under grant no.~14-11-00581).}

\begin{abstract}
We define an infinite graded graph of ordered pairs and a~canonical action
of the group $\mathbb{Z}$ (the adic action) and of the infinite sum of groups of order
two~$\mathcal{D}=\sum_1^{\infty} \mathbb{Z}/2\mathbb{Z}$ on the path space
of the graph. It is proved that these actions are universal for both groups
in the following sense: every ergodic action of these groups with invariant
measure and binomial generator, multiplied by a~special action (the `odometer'),
is metrically isomorphic to the canonical adic action on the path space of the
graph with a~central measure. We consider a~series of related 
problems.

\end{abstract}

\keywords{Keywords:
graph of ordered pairs, universal action, adic transformation, scaled entropy.}

\section{Introduction}
\label{s1}

\vskip0.7pt

This paper is devoted to the solution of several interrelated problems:

\vskip0.7pt

-- we prove the existence of a~universal adic realization of an arbitrary
ergodic action of the group~$\mathbb{Z}$ and the group
$\mathcal{D}=\sum_1^{\infty} \mathbb{Z}/2\mathbb{Z}$, which is an infinite
direct sum of groups of order two, on the path space of the \textit{graded
graph (Bratteli diagram) of ordered pairs}, which is denoted below
by~$\mathrm{OP}$ (Ordered Pairs);

\vskip0.7pt

-- we list all ergodic central measures on the path space of the
graph~$\mathrm{OP}$;

\vskip0.7pt

-- we prove that the scaling sequence for a~dyadic filtration 
does not depend
on the choice of the iterated metric;

\vskip0.7pt

-- we prove that for the adic actions of the groups~$\mathbb{Z}$ and~$\mathcal{D}$
on the path space of~the~graph~$\mathrm{OP}$ there are invariant measures
having a~given subadditive growth~of scaling sequences in the definition
of scaled entropy, and here the entropy of the tail filtration with respect
to this measure has the same growth.

\vskip1pt

Let us comment on these problems and their solutions.

\subsection{Universal adic realization}
\label{ss1.1}
\looseness=1
In the classical lemma of Rokhlin, a~periodic approximation of arbitrary order is
constructed for any aperiodic automorphism with invariant measure. It is
possible to construct a~coherent system of~`Rokhlin towers' defining an
exhaustive periodic approximation (for example, of orders~$2^n$, $n\in \mathbb N$): 
in~\cite{1} and~\cite{2}, the so-called `adic realization' of any
ergodic action was constructed. Namely, it was proved that for every ergodic
automorphism~$S$ there is a~graded graph~$\Gamma$, an adic structure
on~$\Gamma$, and a~central measure~$\nu$ on the path
space~$\mathcal{T}(\Gamma)$ of~$\Gamma$ such that the adic shift
on~$\mathcal{T}(\Gamma)$ is isomorphic to~$S$. The following
question arises: is it possible to refine this construction and implement
arbitrary ergodic actions on the path space \textit{of the same graph} by
modifying only the invariant measure? One can make the question more specific:
consider the so-called symbolic action on the space~$2^G$ of a~discrete 
group~$G$ by (left) shifts; this is a~universal model: any action with
a~binomial generator can be realized in~this way. Then our question, even in the case
when $G=\nobreak\mathbb{Z}$, actually reduces to the following: can we provide 
the construction of a~sequence of approximations of a~`Rokhlin tower' with a~universal 
nature, that is, make it into the adic approximation? However, a~positive answer 
to this question suggests a~positive answer to another, more modest, question: 
is it possible to give a~universal (if possible, constructive) proof of Rokhlin's
lemma for an arbitrary homeomorphism, for example, for a~shift in the space
$2^{\mathbb{Z}}$, simultaneously for all invariant Borel probability measures?
Approximately this question was posed to the first author 
by~Rokhlin. The point is that an uncountable induction was used in all proofs
known so far, and it is desirable to avoid this induction without using
any properties of a~particular measure. It would then be possible
to raise the same question about the adic approximation.\footnote{
(\textit{added in proof}). Already submitting this paper, the
authors learned from a~conversation with~B.~Weiss that such a~proof is
given in the book \textit{`Handbook of Dynamical Systems'}, 2006. In the
chapter `\textit{On the interplay between measurable and topological
dynamics\/}', the authors, Glasner and~Weiss, give a~universal construction
(with respect to any invariant Borel measure) of a~periodic approximation for
an arbitrary aperiodic homeomorphism of a~Polish space.}
It is shown in this paper that such a~universal construction is possible if the question is
modified as follows: \textit{approximate (or construct an adic realization of)
a~direct product of an arbitrary action by the} `\textit{odometer\/}'
\textit{rather than by an arbitrary action of\/~$\mathbb{Z}$}. 
By the odometer we mean the operation of adding one to any element of the
group~$\textbf{Z}_2$ of dyadic numbers; this is an ergodic automorphism with
a~dyadic spectrum. In the group $\mathcal{D}=\sum_1^{\infty}
\mathbb{Z}/2\mathbb{Z}$ considered below, the role of the odometer is played by the
action of~$\mathcal{D}$ on the group $(\mathbb{Z}/2\mathbb{Z})^\mathbb{N} =
\prod_{j=1}^{\infty} (\mathbb{Z}/2\mathbb{Z})$, which is the group
of characters of~$\mathcal{D}$. Thus, the universality of the approximation is
readily achieved by multiplying directly by an action of the simplest kind. As
a~universal space, the direct product of the space~$2^\mathcal{D}$ and the
odometer space arises, which, in our case, coincides with the path space of the
graph of ordered pairs. Thus, a~universal dyadic approximation is constructed.
It would be interesting to find out whether or not such a~possibility exists for
non-Abelian amenable groups.\footnote{
The general problem is as follows: suppose
that a~hyperfinite equivalence relation (that is, the trajectory partition
of a~Borel action of the group $\mathbb Z$; see~\cite{3}) is defined on the standard
Borel space or, in particular, on a~separable metric space. Then find an effective
Borel isomorphism between this space and the path space of some graded locally
finite graph that maps the tail equivalence relation on the path space of the
graph into a~given equivalence relation. It suffices to solve this problem for
the space $2^{\mathbb Z}$ and the usual action of the group $\mathbb Z$
by shifts on this space. It is not known for what amenable groups this
construction is possible in principle.}

\subsection{Invariant central measures}
\label{ss1.2}
Finding invariant measures for group actions on a~given compact or topological  
space is a~traditional problem of the theory of dynamical
systems. In recent papers it has been shown that, in many cases,
this problem can be reduced to the description of the so-called central
measures on the path spaces of graded graphs (or Bratteli diagrams). In our
case, we speak of a~specific graph, namely, the graph $\mathrm{OP}$ of ordered
pairs. Its construction (see~\S\,\ref{s2}) is very simple: the set of vertices
on the next floor is the set of all ordered pairs of vertices on the previous
floor. This graph was studied in~\cite{4}, as well as a~more complicated
graph of unordered pairs (a~`tower of measures').

There is a~natural bijection between the set of central measures on the path
space of the graph of ordered pairs and the set of measures on the space
$2^{\mathbb Z}\times [0,1]$ that are invariant under the direct
product of the actions of the group~$\mathbb Z$ (the shift on the first component
and the odometer on the segment). Along with the direct products of invariant
measures (the invariant measure is unique for the odometer), there is a~series
of the most interesting ergodic measures with respect to which this action is
a~skew product. We describe them completely. Moreover, it is possible
to describe completely the types of actions of the group~$\mathbb Z$ with respect
to these measures: they are the actions that have a~factor isomorphic to the
odometer. We note that the necessity of this condition is trivial, while the
sufficiency is not so obvious. A~similar result holds for the
group~$\mathcal{D}$.

The case under consideration is part of the following general problem. When
there are two continuous actions of a~group~$G$ on two compact spaces,  
$X$ and~$Y$,
describe all measures invariant under the direct product of actions of~$G$
on the product $X\times\nobreak Y$ modulo the invariant measures on~$X$ and~$Y$
(here we also know in addition that an invariant measure is unique on one of the
spaces, for example, on~$X$). However, even under this assumption, the problem
looks immense since the answer must include descriptions of all quotient
actions of~$G$ on both spaces. In our case, we used the fact that all
quotient actions of the odometer are well known, and therefore the problem
of invariant measures has a~clear answer.

\subsection{Scaled entropy of actions and filtrations}
\label{ss1.3}
The scaled entropy of dynamical systems was defined and studied
in recent papers of the first author (\cite{5},~\cite{6}). The theory
of filtrations (decreasing sequences of sigma-algebras) was a~motivation for the
definition: scaled entropy was first defined as an invariant
of a~filtration (see~\cite{7}--\cite{10}). Below we establish the numerical
coincidence of these notions in our situation. However, the importance of the
notion of scaled entropy of dynamical systems has forced us to give an 
interpretation in terms of an adic implementation of the action, which 
is done in this paper. The novelty of the notion is that, although it is a~purely
metric (rather than topological) invariant of the action, it is defined using
the $\varepsilon$-entropy of metric spaces rather than the metric entropy
of partitions. The scaling sequence is first defined for an automorphism
of a~measure space equipped additionally with a~metric (that is, an admissible
triple: cf., for example,~\cite{11}), as a~sequence that normalizes the
$\varepsilon$-entropy of the space equipped with an invariant measure and an
iterated metric. A~conjecture of the first author, which was proved in the
thesis of the second (see~\cite{12} and~\cite{13}), is that the
asymptotic behaviour of this sequence does not depend on the choice of the
original metric. Thus, the scaled entropy is a~new metric invariant of an action
of groups, which considerably generalizes Kolmogorov's entropy theory.

As mentioned above, the notion of scaled entropy arose from an analysis
of the preceding notion, namely, the \textit{entropy of filtration}, which was
suggested as a~metric invariant of filtration, that is, of a~decreasing
sequence of sigma-algebras (or measurable partitions). In this paper, we dwell
on the theory of filtrations only within the limits in which it is necessary
for the main topic of the paper, the more so because this theory, in its
modern version, will be described in detail in a~forthcoming paper
of the first author. In one of the special cases (homogeneous partitions), the
entropy of a~filtration was defined without using any metric on a~measure
space. However, in the general case, it is convenient to define this entropy
(as well as the scaled entropy of an action) first in a~space with a~metric (or
semimetric) and then prove the independence of the choice of the original
metric. Therefore, in this paper we present an argument about the independence
analogous to that used for the scaling sequence of an action. Namely, we show
that, in the definition of a~scaling sequence of a~filtration, it suffices
to restrict ourselves to any admissible metric without taking the supremum over
all admissible semimetrics.

In various special cases, ideas similar to that of the scaled entropy of an action
have already been used by various authors (see~\cite{14} for the sequential
entropy or the Kirillov--\allowbreak Kushnirenko entropy, \cite{15} and~\cite{16} for 
loosely Bernoulli property, \cite{17}~for~slow entropy, and \cite{18} for complexity
entropy). One of the first applications of the idea of scaled entropy is that
the boundedness of a~scaling sequence is equivalent to the condition
of a~purely point spectrum (see~\cite{14},~\cite{18},~\cite{11}). The definition
of scaled entropy seems to be the most general and synthetic, uniting the
concepts of metric and $\varepsilon$-entropy and revealing the auxiliary
(non-principal) role of the metric.

\looseness=-1
A~result outlined in the paper \cite{19} of Ferenczi and~Park means that the
scaling sequence for ergodic automorphisms can have any intermediate asymptotic
behaviour, and this was proved rigorously in~\cite{13}. In \cite{13},  
examples
of special central measures on the path space of the graph~$\mathrm{OP}$ were
constructed for which the adic transformation has an arbitrary prescribed
subadditive growth of the scaling sequence, and in~\cite{20} it was shown that
a~subadditive sequence must exist in any non-empty class of scaling sequences. In
the above examples with intermediate growth, a~free action of the
group~$\mathcal{D}$ is constructed using a~non-free action on~$2^\mathcal{D}$
for which a~measure is concentrated on the set of functions that are constant
on the cosets of some subgroup of~$\mathcal{D}$, which, in turn, is chosen for
the given scaling sequence. In a~more cumbersome way, this effect
of intermediate growth can also be explained for the group~$\mathbb{Z}$ as an
effect of an action on some class of functions 
(that~is, on~$2^\mathbb{Z}$) with hidden symmetries, but a~good formulation 
has yet to be found.

The result on the arbitrariness of the asymptotic behaviour of a~scaling
sequence for the group $\mathbb{Z}$ was proved in~\cite{20} using the adic
implementation of the action. Here it is also proved for locally finite Abelian
groups. For some special measures this asymptotic behaviour for the adic
transformation coincides with the asymptotic behaviour of the entropy of the
tail filtration of the path space of the graph of ordered pairs. This
coincidence is possibly of a~more general nature and not connected with 
specific features of the graph and the measures considered on the path space.

Certainly, our consideration of the graph of ordered pairs and dyadic groups can
readily be generalized, for example to the graph of ordered triples
or~$n$-tuples, with the dyadic group replaced by the triadic, and so~on. 
This introduces no fundamental changes into the results, only
the numerical characteristics being modified. Moreover, we can consider the
$r_n$-adic case in which the number of joined points (instead of pairs) varies
from floor to floor, but remains permanent on each floor. This~is~the case
of the so-called $\{r_n\}$-adic filtrations (see~\cite{9}). The groups
corresponding~to the group~$\mathcal{D}$ are direct sums of the corresponding
groups $\mathbb Z/r_n \mathbb{Z}$. The odometer is replaced by an automorphism
whose discrete spectrum is the corresponding countable subgroup of the roots
of unity. Finally, we can consider the whole project for the odometer whose
spectrum is the group of all roots of unity of positive integer degrees,
replacing the group~$\mathcal{D}$ by the ad\`ele group and the
graph~$\mathrm{OP}$ by a~more complicated graph, which is yet to be studied.

\subsection{Plan of the paper}
\label{ss1.4}
In~\S\,\ref{s2} we present the necessary definitions and
constructions in the theory of Brattell diagrams, construct the
graph~$\mathrm{OP}$ of ordered pairs, and define the action of the
groups~$\mathbb{Z}$ and $\mathcal{D}=\sum \mathbb{Z}/2\mathbb{Z}$ on the space
$\mathcal{T}(\mathrm{OP})$ of infinite paths of the graph~$\mathrm{OP}$.

In~\S\,\ref{s3} we give an independent description of the path space of the
graph~$\mathrm{OP}$ which is convenient for the study of the action
of~$\mathcal{D}$. We also give a~series of examples of central measures on the
space $\mathcal{T}(\mathrm{OP})$ which, as will be seen below, show the
intermediate growth of the scaling sequences.

In~\S\,\ref{s4} we study the action of an adic transformation
on the space~$\mathcal{T}(\mathrm{OP})$. By making an appropriate change
of variables, we reduce the problem of describing the central measures on the path
space $\mathcal{T}(\mathrm{OP})$ of the graph of ordered pairs~$\mathrm{OP}$
to~the~study of measures on~$I^{\mathbb{Z}}\times I^{\mathbb{N}}$ invariant under the
product of the shift~$\mathbb{S}$ and~the odometer~$\mathbb{O}$.

\S\,\ref{s5} is devoted to the study of the set of measures
on~$I^{\mathbb{Z}}\times I^{\mathbb{N}}$ invariant 
under~$\mathbb{S}\times\nobreak\mathbb{O}$. 
It turns out that such measures are divided into
two types, measures of periodic type and aperiodic measures. We
give a~description of ergodic measures of both types and study their
properties.

In~\S\,\ref{s6} we discuss the scaling sequences of actions of~$\mathbb{Z}$
and~$\mathcal{D}$ on the space $\mathcal{T}(\mathrm{OP})$ with special
measures~$\mu^\sigma$, prove that the scaling sequence of a~filtration is
independent of the choice of the iterated metric, and also describe the
calculation of~a~scaling sequence of the tail filtration on the space
$\mathcal{T}(\mathrm{OP})$ with these special measures.

\section{The graph of ordered pairs and actions on this graph}
\label{s2}

\vskip1pt

\subsection{Bratteli diagrams, central measures and filtration}
\label{ss2.1}
Let $\Gamma$ be a~graded graph (Bratteli diagram) whose floors are finite and
indexed by non-\allowbreak negative integers. The (oriented) edges in this graph can
only connect vertices on adjacent floors, and the beginning of every edge lies
on the floor with the smaller index. The set of vertices on floor~$n$ is denoted
by the symbol~$\Gamma_n$, and the set of edges going from~$\Gamma_n$ to~$\Gamma_{n+1}$ 
is denoted by~$E_n$. We denote by~$\mathcal{T}(\Gamma)$ the set of infinite (oriented)
paths in~$\Gamma $ going from the vertices of the zeroth floor.

The set~$\mathcal{T}(\Gamma)$ is equipped with a~topology in the standard way.
A~base of~this topology is formed by the so-called elementary cylindrical
sets, that is, sets of paths having a~fixed origin. In this topology, the
elementary cylindrical sets are also closed, and the whole
space~$\mathcal{T}(\Gamma)$ is compact. 

For every~$n \geq 0$ we consider the partition $\xi_n$ of~$\mathcal{T}(\Gamma)$ 
into sets of paths that coincide starting at floor~$n$. 
These partitions are in a~sense independent complements of the
algebras of cylindrical sets. Denote by~$\xi$ the \textit{tail filtration}
consisting of~the~sequence of these partitions, $(\xi_n)_{n\geq 0}$. The
equivalence relation generated by~the set-theoretic intersection of all
partitions~$\xi_n$ is called the \textit{tail equivalence relation} of~$\Gamma$: 
two paths are equivalent if and only if they coincide starting 
at some~place. 

An additional (adic) structure on a~graded graph is the reverse lexicographic
ordering of paths (see~\cite{1}). It is defined using introducing
a~total order on the set of edges entering each vertex from the vertices at the
previous level. We say that a~path $y \in \mathcal{T}(\Gamma)$ is greater than
a~path $x \in \mathcal{T}(\Gamma)$ if for some positive integer~$n$ they
coincide starting at floor~$n$ and the edge along which the path~$y$
arrives at floor~$n$ is greater than the edge along which the path~$x$
arrives at the same place. The adic order is a~total order on each class
of tail equivalence. An \textit{adic transformation\/}~$\mathcal{A}$ is defined 
on the partially ordered space of paths $\mathcal{T}(\Gamma)$. 
It assigns to every path the next path in the order. We note that the
next path need not exist for a~given path, and therefore the adic
transformation is defined on a~proper subset of~$\mathcal{T}(\Gamma)$.

One can consider Borel measures on the topological space $\mathcal{T}(\Gamma)$.
We note that the partitions~$\xi_n$ are automatically measurable with respect
to any Borel measure. An important class of measures is formed by the
\textit{central measures\/} (measures with maximum entropy). A~measure~$\mu$
is said to be central if the beginnings of a~path are equiprobable for a~fixed
tail, that is, the conditional measures on the elements of the
partition~$\xi_n$ are uniform. We note that the centrality of a~measure is
equivalent to its invariance under the action of the adic transformation. The
set of all central measures of~$\Gamma$ is denoted by the symbol
$\operatorname{Inv}(\Gamma)$. The set of ergodic central measures is the
Choquet boundary of $\operatorname{Inv}(\Gamma)$ and is called the
\textit{absolute} of~$\Gamma$.

For details concerning the general theory of graded graphs and central
measures, see~\cite{21}--\cite{26}.

\subsection({Description of the graph \$\000\134mathrm\{OP\}\$ of ordered pairs
and of the actions on the path space of the graph}){Description of the
graph~$\mathrm{OP}$ of ordered pairs and of the actions on the path space
of the graph}
\label{ss2.2}

\subsubsection({Description of the graph \$\000\134mathrm\{OP\}\$ of ordered pairs})
{Description of the graph\/~$\mathrm{OP}$ of ordered pairs}
\label{sss2.2.1}
The graded graph~$\mathrm{OP}$
of ordered pairs is defined as follows. The set of vertices
on the zeroth floor is defined as~$\mathrm{OP}_0=I=\{0,1\}$. Further, the set
of vertices on floor $n+1$ is equal
to~$\mathrm{OP}_{n+1}=\mathrm{OP}_n\times \mathrm{OP}_n$, \,$n \geq 0$, that~is,
the set of all possible ordered pairs of vertices on floor~$n$. The set
of edges~$E_n$, $n\geq0$, leading from the vertices on floor~$n$ to those
on floor $n+1$ is constructed as follows. Suppose that $v\in
\mathrm{OP}_{n+1}$, \,$v=(v_0,v_1)$, where~$v_0,v_1\in \mathrm{OP}_{n}$. 
We draw edges from $v_0$ and~$v_1$ to~$v$ and specify the
natural order on these edges, namely, the edge from~$v_0$ to~$v$ is assumed
to be the zeroth and the edge from~$v_1$ to~$v$ the first. If~$v_0=v_1$, then
there are two edges simultaneously from~$v_0$ to the vertex $V=(v_0,v_0)$.

By induction, it can readily be seen that the number of vertices on 
floor~$n$ of~$\mathrm{OP}$ is equal to~$2^{2^n}$, and exactly~$2^n$
paths starting from floor~$0$ enter every vertex on floor~$n$. Since
exactly two edges enter every vertex, except for the vertices on the zeroth 
floor, it follows that the tail filtration~$\xi$ for the graph  
is dyadic.

An adic automorphism~$\mathcal{A}$ (an action of the group~$\mathbb{Z}$) is
defined on the space $\mathcal{T}(\mathrm{OP})$, and one can define 
the canonical (adic) action of the group
$$
\mathcal{D} = \bigoplus_{i=0}^\infty \mathbb{Z}/2\mathbb{Z}.
$$

\subsubsection({Description of the adic transformationon the graph \$\000\134mathrm\{OP\}\$})
{Description of the adic transformation on the graph~$\mathrm{OP}$}
\label{sss2.2.2}
The graph of ordered pairs~$\mathrm{OP}$ is equipped with an
order on the edges entering every vertex, and therefore an adic
transformation~$\mathcal{A}$ is defined automatically (see the general
definition in~\S\,\ref{ss2.1}). More specifically, for a~path
$x\in\mathcal{T}(\mathrm{OP})$ passing through vertices $v_n \in
\mathrm{OP}_n$, $n\geq 0$, we can find the first edge of~$x$ whose index
is~$0$. Let this edge pass from a~vertex~$v_n$ to~$v_{n+1}$. We define the path
$\mathcal{A}x$ in such a~way that it coincides with~$x$ starting from floor $n+1$, 
enters~$v_{n+1}$  
along an edge with index~$1$, and the previous edges all have index~$0$.   
This transformation is defined only for
paths having at least one edge with index~$0$.

\subsubsection
({Description of the canonical action of the group \$\000\134mathcal\{D\}\$
on the graph \$\000\134mathrm\{OP\}\$})
{Description of the canonical action of the group~$\mathcal{D}$
on the graph\/~$\mathrm{OP}$}
\label{sss2.2.3}
The simplest way to define the
canonical (adic) action~$\kappa$ of~$\mathcal{D}$ is to define it on the
generators. We denote by~$\kappa(g)$ the action of an element~$g$
of~$\mathcal{D}$ on~$\mathcal{T}(\mathrm{OP})$. Let the group $\mathcal{D} =
\bigoplus_{i=0}^\infty \mathbb{Z}/2\mathbb{Z}$ be generated by the elements~$g_i$,
$i \geq 0$. We denote by~the symbol~$\mathcal{D}_n$, $n \geq 0$, the finite
subgroup generated by~$g_0, \dots, g_{n-1}$. Let $g_n$ be a~generator
of~$\mathcal{D}$ and let $x\in \mathcal{T}(\mathrm{OP})$ be a~path. We
define a~path $\kappa(g_n)x$ in such a~way that it coincides with~$x$, starting  
at floor~$n+1$, the index of its edge entering the vertex
on floor $n+1$ differs from the index of the corresponding edge~in the~path~$x$, and
the indices of the edges between floors~$i$ and~$i+1$, \,$0\leq i<n$, 
in~the~paths~$x$ and~$\kappa(g_n)x$ coincide. The actions (of the generators) defined
in this way satisfy the commutation relations of the group~$\mathcal{D}$, and
therefore define an~action.

\vskip1pt

We note that the action~$\kappa$ of~$\mathcal{D}$ is closely related to the
tail filtration~$\xi$ and to the set of central measures
$\operatorname{Inv}(\mathrm{OP})$. Namely, the partition~$\xi_n$ is precisely
the partition into orbits of the action of the subgroup~$\mathcal{D}_n$. The
partition into orbits of the whole group~$\mathcal{D}$ is the tail partition
for the graph~$\mathrm{OP}$. A~Borel measure on the space
$\mathcal{T}(\mathrm{OP})$ is central if and only if it is invariant 
under the action~$\kappa$ of~$\mathcal{D}$.

\section({Independent description of the path space of the
graph~\$\000\134mathrm\{OP\}\$}){Independent description of the path space
of the graph~$\mathrm{OP}$}
\label{s3}

\vskip1pt

\subsection({The space \$I\000\136\{\000\134mathcal\{D\}\}\000\134times
I\000\136\{\000\134mathbb\{N\}\}\$, the action of the
group~\$\000\134mathcal\{D\}\$, the filtration and the isomorphism
theorem}){The space $I^{\mathcal{D}}\times I^{\mathbb{N}}$, the action of the
group~$\mathcal{D}$, the filtration and the isomorphism theorem}
\label{ss3.1}
Consider the space $I^{\mathcal{D}}\,{\times}\, I^{\mathbb{N}}$,
consisting of all possible pairs~$(w,\alpha)$, where $w\in I^{\mathcal{D}}$ is
a~configuration on the group~$\mathcal{D}$ and~$\alpha\in I^{\mathbb{N}}$ is an infinite
sequence of zeros and ones. The cylindrical sets on this space, and also the
topology of the product of compact spaces, are defined in the standard way. The
cylindrical sets are open and closed in this topology.

The space $I^{\mathbb{N}}$ with coordinatewise addition forms a~commutative
group, and the group~$\mathcal{D}$ is isomorphic to the subgroup
of~$I^{\mathbb{N}}$ consisting of finite sequences (with zeros starting 
at some place).

\begin{definition}
\label{d1}
We denote by~$\tau$ the embedding of the group~$\mathcal{D}$ in the
group~$I^{\mathbb{N}}$ which takes every element $g \in \mathcal{D}$, \,$g= \sum
\alpha_i g_{i-1}$, to the finite sequence $(\alpha_i)_{i \geq 1}$
of the coefficients of its expansion in the generators.
\end{definition}

The group~$\mathcal{D}$ acts on the space $I^{\mathcal{D}}$ by 
shifting the argument, 
$$
w(\,\cdot\,) \mapsto w(\,\cdot\,+g), \qquad g \in \mathcal{D}, \quad w
\in I^{\mathcal{D}},
$$
and on the space $I^{\mathbb{N}}$ by addition, 
$$
\alpha\mapsto \alpha+\tau(g), \qquad \alpha \in I^{\mathbb{N}},
\quad g \in \mathcal{D}.
$$

The direct product of these actions, which is the action of~$\mathcal{D}$
on the product $I^{\mathcal{D}}\times I^{\mathbb{N}}$, is given by the formula
\begin{equation}
\label{eq1}
\operatorname{diag}(g)\colon \bigl(w(\,\cdot\,),\alpha\bigr) \mapsto
\bigl(w(\,\cdot\,+g), \alpha+\tau(g)\bigr), \qquad g \in \mathcal{D}, \quad
w \in I^{\mathcal{D}}, \quad\alpha \in I^{\mathbb{N}}.
\end{equation}

This action and the dyadic structure (the sequence of embedded subgroups
$\mathcal{D}_n$) of the group~$\mathcal{D}$ define a~dyadic filtration on the
space $I^{\mathcal{D}}\times I^{\mathbb{N}}$, which is the tail filtration for
this action. Namely, for $n \geq 0$ we define the partition $\zeta_n$ of the
space $I^{\mathcal{D}}\times\nobreak I^{\mathbb{N}}$ as the partition into orbits
of the action of the subgroup~$\mathcal{D}_n$. It should be noted that this
filtration is not the direct product of the tail filtrations of the corresponding
actions of~$\mathcal{D}$ on~$I^{\mathcal{D}}$ and~$I^{\mathbb{N}}$.

\begin{theorem}
\label{t1}
The path space\/ $\mathcal{T}(\mathrm{OP})$ of the graph\/~$\mathrm{OP}$
of ordered pairs with the action\/~$\kappa$ and the space\/ $I^{\mathcal{D}}\times
I^{\mathbb{N}}$ with the action\/ $\operatorname{diag}$ are isomorphic
as topological\/ $\mathcal{D}$-spaces.
\end{theorem}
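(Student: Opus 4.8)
The plan is to construct an explicit equivariant homeomorphism $\Phi\colon \mathcal{T}(\mathrm{OP}) \to I^{\mathcal{D}}\times I^{\mathbb{N}}$ directly from the combinatorics of paths. First I would fix, by induction on $n$, a canonical bijection $\mathrm{OP}_n \cong I^{\mathcal{D}_n}$ between the $n$-th floor and the configurations on the finite subgroup $\mathcal{D}_n=\langle g_0,\dots,g_{n-1}\rangle$; this is possible because $|\mathcal{D}_n|=2^n$, so $|I^{\mathcal{D}_n}|=2^{2^n}=|\mathrm{OP}_n|$. For the base case $\mathcal{D}_0$ is trivial and $\mathrm{OP}_0=I=I^{\mathcal{D}_0}$; for the step, the splitting $\mathcal{D}_{n+1}=\mathcal{D}_n\oplus\langle g_n\rangle$ gives $I^{\mathcal{D}_{n+1}}=I^{\mathcal{D}_n}\times I^{\mathcal{D}_n}$, the two factors being the restrictions to the cosets on which the $g_n$-coordinate equals $0$ and $1$. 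This matches the recursion $\mathrm{OP}_{n+1}=\mathrm{OP}_n\times\mathrm{OP}_n$ once we declare the index-$0$ factor to correspond to the zero coset $\mathcal{D}_n$ and the index-$1$ factor to $\mathcal{D}_n+g_n$. Under this convention the $\epsilon$-th component of a vertex $v_{n+1}$, viewed as a function $f_{n+1}$ on $\mathcal{D}_{n+1}$, is the function $h\mapsto f_{n+1}(h+\epsilon g_n)$ on $\mathcal{D}_n$.

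Next, to a path $x$ passing through vertices $v_n$ along edges of index $\epsilon_n$ I associate the functions $f_n\in I^{\mathcal{D}_n}$ encoding $v_n$. The relation $v_n=(v_{n+1})_{\epsilon_n}$ then becomes the compatibility $f_n(h)=f_{n+1}(h+\epsilon_n g_n)$ for $h\in\mathcal{D}_n$. Setting the cumulative shift $t_n=\sum_{i<n}\epsilon_i g_i\in\mathcal{D}_n$ (so that $t_{n+1}=t_n+\epsilon_n g_n$), a short computation shows that for each fixed $\gamma\in\mathcal{D}$ the value $f_n(\gamma+t_n)$ stabilizes in $n$ (once $\gamma\in\mathcal{D}_n$); I define $w(\gamma)$ to be this common value and set $\alpha=(\epsilon_{i-1})_{i\geq1}\in I^{\mathbb{N}}$, giving $\Phi(x)=(w,\alpha)$. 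The inverse is explicit: from $(w,\alpha)$ one reads $\epsilon_n=\alpha_{n+1}$, reconstructs $t_n$, and recovers $f_n(h)=w(h+t_n)$, which is automatically compatible and hence defines a unique path; thus $\Phi$ is a bijection. Both spaces are compact Hausdorff, and $\Phi$ is continuous because the initial segment $\epsilon_0,\dots,\epsilon_{n-1}$ together with $v_n$ depends only on $x$ up to floor $n$ and determines $w$ on $\mathcal{D}_n$ and the first $n$ coordinates of $\alpha$; so $\Phi$ is a homeomorphism.

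It remains to verify equivariance, which is the heart of the matter, and for this it suffices to check it on the generators $g_m$. On the $\alpha$-component, $\kappa(g_m)$ flips exactly the index $\epsilon_m$ and leaves the others unchanged, matching the addition of $\tau(g_m)$, the sequence with a single $1$ in position $m+1$, which flips the coordinate $\alpha_{m+1}=\epsilon_m$. On the $w$-component, $\kappa(g_m)x$ agrees with $x$ from floor $m+1$ upward, so $f'_n=f_n$ for $n>m$, while the flipped index changes the cumulative shift to $t'_n=t_n+g_m$ for $n>m$; therefore $w'(\gamma)=f_n(\gamma+t'_n)=f_n(\gamma+g_m+t_n)=w(\gamma+g_m)$, which is precisely the shift action of $\operatorname{diag}(g_m)$. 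Since every element of $\mathcal{D}$ is a sum of generators and both $\kappa$ and $\operatorname{diag}$ are genuine actions, equivariance follows on all of $\mathcal{D}$.

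The main obstacle is identifying the correct \emph{untwisting} by the cumulative shift $t_n$: the naive restriction maps are compatible only up to the translation by $\epsilon_n g_n$ dictated by the adic order, so without absorbing these translations into $w$ the vertex data would not assemble into a single shift-equivariant configuration on $\mathcal{D}$. Once the shift $t_n$ is in place, both the stabilization defining $w$ and the equivariance computation are short; the labeling convention relating the index-$0$/index-$1$ edge decomposition to the cosets of $g_n$ must be fixed consistently at the outset, since it is exactly what converts the edge-flip performed by $\kappa(g_m)$ into the group shift on $I^{\mathcal{D}}$.
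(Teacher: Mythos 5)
Your proposal is correct and follows essentially the same route as the paper: the inductive identification $\mathrm{OP}_n\cong I^{\mathcal{D}_n}$ is the paper's labelling map, your cumulative shift $t_n=\sum_{i<n}\epsilon_i g_i$ is exactly the untwisting $\sum_{i=0}^{n-1}\alpha_{i+1}g_i$ used there to make the vertex configurations consistent, and the equivariance check on generators is identical. No gaps.
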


To prove this theorem, we need some preliminary considerations.

\subsection{Labels on the vertices of the graph and 
proof of Theorem~\ref{t1}}
\label{ss3.2}
The construction of the graph~$\mathrm{OP}$ of ordered pairs
enables us to parametrize its vertices 
by configurations on finite subgroups of~$\mathcal{D}$ in a~natural way. 
We inductively define
a~map~$\Phi$ which assigns configurations on~$\mathcal{D}_n$, $n\geq 0$, 
bijectively to vertices on floor~$n$. When $v \in \mathrm{OP}_0
= I$ we define $\Phi[v]$ as the configuration that takes the single element
of the subgroup $\mathcal{D}_0=\{0\}$ to~$v$. Further, suppose that we have
already defined~$\Phi$ on~$\mathrm{OP}_n$. Let $v \in
\mathrm{OP}_{n+1}$, \,$v = (v_0,v_1)$, where $v_0,v_1 \in\nobreak \mathrm{OP}_n$. 
We define the configuration~$\Phi[v]$ on~$\mathcal{D}_{n+1}$ by the relation
\begin{equation}
\label{eq2}
\Phi[v](h) =
\begin{cases}
\Phi[v_0](h),& h \in \mathcal{D}_n,
\\[2pt]
\Phi[v_1](g_n + h),& h \in \mathcal{D}_{n+1}\setminus \mathcal{D}_n.
\end{cases}
\end{equation}

\begin{remark}
\label{r1} For each $n \geq 0$ the map~$\Phi$ is a~bijection between the
floor~~$\mathrm{OP}_n$ of the graph~$\mathrm{OP}$ and the set
$I^{\mathcal{D}_n}$ of configurations on the subgroup~$\mathcal{D}_n$.
\end{remark}

We note that for two vertices $v\in \mathrm{OP}_n$ and $v' \in
\mathrm{OP}_{n+1}$ joined by an edge with index $\beta \in \{0,1\}$, the
corresponding configurations on~$\mathcal{D}_n$ are connected by the relation
\begin{equation}
\label{eq3}
\Phi[v](h) =\Phi[v'](h + \beta g_n), \qquad h \in \mathcal{D}_n.
\end{equation}

\begin{proof}[of Theorem~\ref{t1}]
To prove the theorem, we shall explicitly construct an isomorphism~$\Psi \colon
\mathcal{T}(\mathrm{OP}) \to I^{\mathcal{D}}\times I^{\mathbb{N}}$.
To do this, we define two maps, a~map~$F$ of~$\mathcal{T}(\mathrm{OP})$ 
onto $I^{\mathcal{D}}$ and a~map~$A$ of~$\mathcal{T}(\mathrm{OP})$ 
onto~$I^{\mathbb{N}}$. 
Then the map $\Psi = (F, A)$ will be the desired homeomorphism 
of~$\mathcal{T}(\mathrm{OP})$ onto $I^{\mathcal{D}}\times I^{\mathbb{N}}$.

\vskip1pt

Let $x \in \mathcal{T}(\mathrm{OP})$ be the path passing through the vertices
$v_n \in \mathrm{OP}_n$, $n \geq 0$, with edges $e_n \in E_n$, \,$n \geq 0$
\,($e_n$ joins $v_n$ and~$v_{n+1}$). We recall that, by the
construction of the graph of ordered pairs, an order is given on the two edges
leading to a~given vertex (the numbers 0 and~1). For $n \geq 1$ we
set~$\alpha_n$ equal to the ordinal number of the edge $e_{n-1}$. This
defines a~sequence $\alpha = (\alpha_n)_{n \geq 1} \in I^{\mathbb{N}}$, which
we call the image of~$x$ under the map~$A$:
$$
A[x]:=\alpha.
$$

The map~~$F$ is defined in a~somewhat more complicated way. We have to define
a~configuration $F[x] \in I^{\mathcal{D}}$ on the group~$\mathcal{D}$. Here, 
every vertex~$v_n$, $n \geq 0$, of the path~$x$ is taken by the map~$\Phi$
to the configuration~$\Phi[v_n]$ on the finite subgroup~$\mathcal{D}_n$. We note that
the sequence of configurations~$\Phi[v_n]$, $n \geq 0$, is not consistent
in general, that is, the configuration~$\Phi[v_n]$ is not the restriction of the
configuration~$\Phi[v_{n+1}]$ to the subgroup~$\mathcal{D}_n$. A~direct
consequence of the equation~\eqref{eq3} and the definition of the
number~$\alpha_{n+1}$ is the identity
$$
\Phi[v_n](\,\cdot\,) = \Phi[v_{n+1}](\,\cdot\,+\alpha_{n+1} g_n) \quad
\text{on}\quad \mathcal{D}_n, \quad n\geq 0.
$$

This implies the relation
$$
\Phi[v_n]\biggl(\,\cdot\,+\sum_{i=0}^{n-1}\alpha_{i+1} g_i\biggr) =
\Phi[v_{n+1}]\biggl(\,\cdot\,+\sum_{i=0}^{n}\alpha_{i+1} g_i\biggr) \quad
\text{on}\quad \mathcal{D}_n, \quad n\geq 0,
$$
which means that the sequence of configurations
$\Phi[v_n]\bigl(\,\cdot\,+\sum_{i=0}^{n-1}\alpha_{i+1} g_i\bigl)$, $n \geq 0$,
is consistent. Therefore, we can specify a~configuration on~~$\mathcal{D}$
whose restriction to every subgroup~$\mathcal{D}_n$ coincides with the
configuration $\Phi[v_n]\bigl(\,\cdot\,+\sum_{i=0}^{n-1}\alpha_{i+1}
g_i\bigr)$. We define this configuration on~$\mathcal{D}$ to be the
image~$F[x]$ of the path~$x$ under the~map~$F$:
\begin{equation}
\label{eq4}
F[x](g) = \Phi[v_n]\biggl(g+\sum_{i=0}^{n-1}\alpha_{i+1} g_i\biggr), \qquad
g \in \mathcal{D}_n, \quad n \geq 1.
\end{equation}

As already noted above, we define the map~$\Psi$ as a~pair~$(F,A)$:
$$
\Psi[x]=(F[x],A[x]), \qquad x \in \mathcal{T}(\mathrm{OP}).
$$
We claim that $\Psi$ is a~bijection of the path space
$\mathcal{T}(\mathrm{OP})$ of the graph~$\mathrm{OP}$ onto $I^{\mathcal{D}}
\times I^{\mathbb{N}}$. We note that formula~\eqref{eq4} enables us
to recover uniquely the vertices~$v_n$, $n \geq 0$, of~$x$ from the
configuration~$F[x]$ and the sequence $\alpha = A[x]$. Further, the edge~$e_n$,
$n \geq 0$, can be recovered uniquely from its end~$v_{n+1}$ and the ordinal number
~$\alpha_{n+1}$. Moreover, the configuration can be chosen arbitrarily
in~$I^{\mathcal{D}}$, and the sequence~$\alpha$ can be chosen arbitrarily
in~$I^{\mathbb{N}}$. This proves that the map~$\Psi$ is a~bijection.

It is obvious from the definition of~$\Psi$ that the image of a~cylindrical set 
in~$\mathcal{T}(\mathrm{OP})$ under~$\Psi$ 
is a~cylindrical set in~$I^{\mathcal{D}}\times\nobreak I^{\mathbb{N}}$.
This implies that $\Psi$
is a~homeomorphism.

It remains to prove that $\Psi$ is~$\mathcal{D}$-equivariant. Let us
verify the necessary commutation relation for an arbitrary $m \geq 0$:
\begin{equation}
\label{eq5}
\Psi[\kappa(g_m)x]=\operatorname{diag}(g_m)\Psi[x], \qquad x \in
\mathcal{T}(\mathrm{OP}).
\end{equation}

By the definition of the canonical action~$\kappa$, the path~$\kappa(g_m)x$
coincides with the path~$x$ starting at floor~$m+1$, the ordinal
numbers of the edges in these paths coincide up~to floor~$m$, and the
ordinal number of the edge going from floor~$m$ to floor~$m+1$ is
different. By the definition of~$A$, this implies that
$$
A[\kappa(g_m)x]_i = A[x]_i, \quad i \ne
m+1,\qquad A[\kappa(g_m)x]_{m+1} \ne A[x]_{m+1}.
$$

In other words, $A[\kappa(g_m)x] = A[x]+\tau(g_m)$. The equation
$F[\kappa(g_m)x](\,\cdot\,)=F[x](\,\cdot\,+g_m)$ follows from 
formula~\eqref{eq4}. Thus, we have proved relation~\eqref{eq5} and the
$\mathcal{D}$-equivariance of~$\Psi$. This completes the proof
of Theorem~\ref{t1}. 
\end{proof}

\subsection({Examples of central measures on the path
space \$\000\134mathcal\{T\}(\000\134mathrm\{OP\})\$ of the
graph \$\000\134mathrm\{OP\}\$, and the
measures \$\000\134mu\000\136\000\134sigma\$}){Examples of central measures
on the path space~$\mathcal{T}(\mathrm{OP})$ of the graph~$\mathrm{OP}$, 
and the measures~$\mu^\sigma$}
\label{ss3.3}
One of the central problems of the theory
of graded graphs is that of describing the central measures on the path
space of a~graph. \S\,\ref{s5} below is devoted to a~detailed study of the set
$\operatorname{Inv}(\mathrm{OP})$ of central measures on the path space
$\mathcal{T}(\mathrm{OP})$ of~$\mathrm{OP}$. In this subsection 
we discuss a~class of examples of central measures.

The equivariant homeomorphism~$\Psi$ takes the set
$\operatorname{Inv}(\mathrm{OP})$ of central measures to the set of measures
on~$I^{\mathcal{D}}\times I^{\mathbb{N}}$ invariant under the action
$\operatorname{diag}$ of the group~$\mathcal{D}$ given by 
formula~\eqref{eq1}. The projections of these measures to~$I^{\mathcal{D}}$
and~$I^{\mathbb{N}}$ are also invariant under the corresponding actions
of~$\mathcal{D}$. The only measure on~$I^{\mathbb{N}}$ invariant under this
action of~~$\mathcal{D}$ is the Lebesgue measure~$m$.

Measures of the form $\mu\times m$, where $\mu$ is a~$\mathcal{D}$-invariant
measure on~$I^{\mathcal{D}}$ and $m$ is the Lebesgue measure
on~$I^{\mathbb{N}}$, can serve as examples of measures
on~$I^{\mathcal{D}}\times I^{\mathbb{N}}$ that are invariant under the action
$\operatorname{diag}$ of~$\mathcal{D}$. Among these measures, we
single out a~special class.

\begin{definition}
\label{d2} 
Let $\sigma = (\sigma_i)_{i\geq 0}$ be a~sequence of zeros and
ones. Let
$$
\mathcal{D}^\sigma = \langle g_i \colon \sigma_i=0, \, i \geq 0 \rangle
$$
be the subgroup of~$\mathcal{D}$ generated by those elements~$g_i$ for
which $\sigma_i=\nobreak0$. Consider the quotient map~$\mathcal{D} \to
\mathcal{D}/\mathcal{D}^\sigma$ and the map of configurations
$I^{\mathcal{D}/\mathcal{D}^\sigma}\to I^{\mathcal{D}}$ induced by~it. 
We denote by~$m^\sigma$ the measure on~$I^{\mathcal{D}}$ which is the
pushforward of the Lebesgue measure on~$I^{\mathcal{D}/\mathcal{D}^\sigma}$
under the map of configurations, and by~$\omega^\sigma$ the product
$m^\sigma \times m$ of~$m^\sigma$ and the Lebesgue measure on~$I^{\mathbb{N}}$.
\end{definition}

The measures $\mu^\sigma$ on the path space $\mathcal{T}(\mathrm{OP})$ 
of~$\mathrm{OP}$ (the pushforward of the measures~$\omega^\sigma$ under
the map~$\Psi^{-1}$) were studied in~\cite{27}. The scaling sequences (see
\S\,\ref{s6}) of the adic automorphism on the space
$(\mathcal{T}(\mathrm{OP}),\mu^\sigma)$ were calculated there, as well as the
scaling sequence of the canonical action~$\kappa$ of~$\mathcal{D}$ on this
space.

\section({Action of the group~\$\000\134mathbb\{Z\}\$}){Action
of the group~$\mathbb{Z}$}
\label{s4}

\vskip1pt

\subsection({Action of the group~\$\000\134mathbb\{Z\}\$
on~\$I\000\136\{\000\134mathcal\{D\}\}\000\134times
I\000\136\{\000\134mathbb\{N\}\}\$}){Action of the group~$\mathbb{Z}$
on~$I^{\mathcal{D}}\times I^{\mathbb{N}}$}
\label{ss4.1}
We proceed with the
study of the action of~$\mathbb{Z}$ on the path space
$\mathcal{T}(\mathrm{OP})$ of the graph~$\mathrm{OP}$, that is, the adic
transformation~$\mathcal{A}$ defined in~\S\,\ref{ss2.2}.

\begin{definition}
\label{d3} 
Let~$I^{\mathbb{N}}_0$ denote the set of all sequences
in~$I^{\mathbb{N}}$ containing only finitely many zeros or ones and
$I^{\mathbb{N}}_\infty$ the complement of~$I^{\mathbb{N}}_0$, that is, the
set of sequences containing infinitely many zeros and ones.
\end{definition}

Recall the definition of odometer.

\begin{definition}
\label{d4} 
The \textit{odometer} is a~transformation~$\mathbb{O}$ defined
on the set~$I^{\mathbb{N}}$. Let $\alpha \in\nobreak I^{\mathbb{N}}$ be such that
$\alpha_1=\alpha_2=\dots=\alpha_{n-1}=\nobreak1$ and $\alpha_{n}=\nobreak0$ 
for some $n\geq 0$.
Then the sequence $\mathbb{O}[\alpha]$ is defined in such a~way that
$\mathbb{O}[\alpha]_i=\alpha_i$ when $i>n$, $\mathbb{O}[\alpha]_{n}=\nobreak1$, 
and $\mathbb{O}[\alpha]_i=\nobreak0$ when $i< n$.
\end{definition}

The domain of the odometer in this definition is, generally speaking, not the
whole of~$I^{\mathbb{N}}$. We note that $I^{\mathbb{N}}$ is the
group~$\mathbb{Z}_2$ of integer $2$-adic numbers in the~`digital'
representation. Here the odometer~$\mathbb{O}$ is the addition of one in the
group~$\mathbb{Z}_2$. The odometer defined on~$I^{\mathbb{N}}$ using the
structure of~$2$-adic numbers is already defined on the whole set. However, we
will not use this extension but confine ourselves to Definition~\ref{d4}.
This approach is more convenient when working with the graph~$\mathrm{OP}$.

\begin{remark}
\label{r2} 
All the $\mathbb{O}^n[\alpha]$, $n \in \mathbb{Z}$, are defined for an
$\alpha\in I^{\mathbb{N}}$ if and only if~$\alpha \in I^{\mathbb{N}}_\infty$.

\end{remark}

We recall that the adic transformation is also defined only on a~subset of the
path space $\mathcal{T}(\mathrm{OP})$ of the graph~$\mathrm{OP}$ rather than
on the whole space.

\begin{definition}
\label{d5} 
Let $\mathcal{T}_0(\mathrm{OP})\subset \mathcal{T}(\mathrm{OP})$ be
the set of all possible paths $x\in \mathcal{T}(\mathrm{OP})$ for which all
the $\mathcal{A}^nx$, $n \in \mathbb{Z}$, are defined.
\end{definition}

\begin{remark}
\label{r3} 
Let $x\in\mathcal{T}(\mathrm{OP})$. Then
$x\in\mathcal{T}_0(\mathrm{OP})$ if and only if~$A[x] \in
I^{\mathbb{N}}_\infty$. The map~$\Psi$ establishes a~bijection between
$\mathcal{T}_0(\mathrm{OP})$ and $I^{\mathcal{D}}\times I^{\mathbb{N}}_\infty$.
\end{remark}

We now study the transformation $\mathcal{A}^\Psi= \Psi \circ \mathcal{A}\circ
\Psi^{-1}$ on~$I^{\mathcal{D}}\times I^{\mathbb{N}}_\infty$. It follows from
the definition of the adic transformation that for $x \in
\mathcal{T}_0(\mathrm{OP})$ the sequence $A[\mathcal{A}x]$ is the sequent
to~$A[x]$ in the reverse lexicographic order. In other words, the
transformation $\mathcal{A}^\Psi\colon I^{\mathcal{D}}\times
I^{\mathbb{N}}_\infty \to I^{\mathcal{D}}\times I^{\mathbb{N}}_\infty$ acts
on the second coordinate as the odometer~$\mathbb{O}$. On the other hand, the
orbits of the actions of the groups $\mathbb{Z}$ and~$\mathcal{D}$ on the space
$\mathcal{T}_0(\mathrm{OP})$ coincide, and both actions are free. Hence,
for every $x \in\nobreak \mathcal{T}_0(\mathrm{OP})$ there is a~unique element 
$g \in \mathcal{D}$ such that $\mathcal{A}x = \kappa(g) x$. Then
$\mathbb{O}[A[x]]=A[\mathcal{A}x] = A[\kappa(g) x]=A[x]+\tau(g)$, that is, $g =
\tau^{-1}(\mathbb{O}[A[x]]-A[x])$. Consequently,
$$
F[\mathcal{A}x](\,\cdot\,) = F[\kappa(g) x](\,\cdot\,) = F[x](\,\cdot\,+g).
$$

Thus, we have proved the following assertion.

\begin{proposition}
\label{p1} 
The transformation\/ $\mathcal{A}^\Psi$ on\/~$I^{\mathcal{D}}\times
I^{\mathbb{N}}_\infty$ satisfies the relation
\begin{equation}
\label{eq6}
\mathcal{A}^\Psi (f,\alpha)= \bigl(f\bigl(\,\cdot\,+ \tau^{-1}(\mathbb{O}[\alpha]-\alpha)\bigr),
\mathbb{O}[\alpha]\bigr), \qquad f \in I^{\mathcal{D}},\quad \alpha
\in I^{\mathbb{N}}_\infty.
\end{equation}
\end{proposition}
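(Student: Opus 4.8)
The plan is to transport the adic transformation through the homeomorphism $\Psi$ of Theorem~\ref{t1} and read off its action one coordinate at a time. By Remark~\ref{r3}, $\Psi$ restricts to a bijection $\mathcal{T}_0(\mathrm{OP})\to I^{\mathcal{D}}\times I^{\mathbb{N}}_\infty$, which is exactly the domain on which $\mathcal{A}^\Psi$ must be analyzed, since all integer iterates of $\mathcal{A}$ are defined precisely on $\mathcal{T}_0(\mathrm{OP})$. So for $x\in\mathcal{T}_0(\mathrm{OP})$ with $(f,\alpha)=\Psi[x]$, I would compute $\Psi[\mathcal{A}x]$ by separately tracking the two components $F$ and $A$.

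For the second component, recall that $A[x]$ records the sequence of edge indices of $x$ and that, by construction, $\mathcal{A}$ sends a path to its successor in the reverse lexicographic order. The first observation I would make is that on the index sequences this successor operation is literally the odometer $\mathbb{O}$ of Definition~\ref{d4}: incrementing in reverse lexicographic order flips the initial run of ones to zeros and the first zero to a one. Hence $A[\mathcal{A}x]=\mathbb{O}[A[x]]$, so the second coordinate of $\mathcal{A}^\Psi(f,\alpha)$ is $\mathbb{O}[\alpha]$.

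For the first component I would exploit that, on $\mathcal{T}_0(\mathrm{OP})$, the $\mathbb{Z}$-orbit of $x$ under $\mathcal{A}$ coincides with its $\mathcal{D}$-orbit under $\kappa$: both equal the whole tail-equivalence class (the $\mathcal{D}$-orbits are the tail partition by \S\,\ref{sss2.2.3}, and on $\mathcal{T}_0(\mathrm{OP})$, where $A[x]$ has infinitely many zeros and ones, iterating $\mathcal{A}$ sweeps out the entire class). Since both actions are free, there is a unique $g\in\mathcal{D}$ with $\mathcal{A}x=\kappa(g)x$. To pin down $g$, I would feed it back into the second coordinate: from $\mathbb{O}[\alpha]=A[\mathcal{A}x]=A[\kappa(g)x]=\alpha+\tau(g)$ one gets $\tau(g)=\mathbb{O}[\alpha]-\alpha$; since the odometer alters only finitely many digits this difference is finitely supported, hence lies in the image $\tau(\mathcal{D})$, and injectivity of $\tau$ gives $g=\tau^{-1}(\mathbb{O}[\alpha]-\alpha)$. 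Finally, the $\mathcal{D}$-equivariance of $\Psi$ established in the proof of Theorem~\ref{t1} (namely $F[\kappa(g)x](\,\cdot\,)=F[x](\,\cdot\,+g)$) yields $F[\mathcal{A}x](\,\cdot\,)=f(\,\cdot\,+\tau^{-1}(\mathbb{O}[\alpha]-\alpha))$, which is exactly the first coordinate in \eqref{eq6}.

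The only genuinely structural point, and the one I expect to be the main obstacle, is the coincidence of the $\mathbb{Z}$- and $\mathcal{D}$-orbits together with the freeness of both actions on $\mathcal{T}_0(\mathrm{OP})$: this is what legitimizes representing $\mathcal{A}$ as a single group element $\kappa(g)$ on each orbit. Once that is granted, the rest is forced, being purely a matter of matching the odometer coordinate and invoking the equivariance of $\Psi$.
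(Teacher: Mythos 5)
Your proposal is correct and follows essentially the same route as the paper's own argument: identify the second coordinate's evolution with the odometer via the reverse lexicographic successor, use the coincidence and freeness of the $\mathbb{Z}$- and $\mathcal{D}$-orbits on $\mathcal{T}_0(\mathrm{OP})$ to write $\mathcal{A}x=\kappa(g)x$, solve for $g=\tau^{-1}(\mathbb{O}[\alpha]-\alpha)$ from the second coordinate, and conclude by the $\mathcal{D}$-equivariance of $\Psi$. The paper likewise asserts the orbit coincidence and freeness without further elaboration, so your level of detail matches its proof.
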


\subsection({Isomorphism of the
transformation~\$\000\134mathcal\{A\}\000\136\000\134Psi\$
and the direct product of the actions
on~\$I\000\136\{\000\134mathbb\{Z\}\}\000\134times
I\000\136\{\000\134mathbb\{N\}\}\$. The map~\$\000\134Lambda\$}){Isomorphism
of the transformation $\mathcal{A}^\Psi$ and the direct product of the actions
on~$I^{\mathbb{Z}}\times I^{\mathbb{N}}$. The map~$\Lambda$}
\label{ss4.2}
For a~more detailed study of the action of the
group~$\mathbb{Z}$ it is reasonable
to replace the phase space $I^{\mathcal{D}}\times I^{\mathbb{N}}$
by~$I^{\mathbb{Z}}\times I^{\mathbb{N}}$. Here we shall do this in such a~way that
the transformation $\mathcal{A}^\Psi$ will pass to the transformation
$\mathbb{S}\times\mathbb{O}$, where $\mathbb{S}$~stands for the left shift
on~$I^{\mathbb{Z}}$.

\vskip1pt

We want to construct a~map~$\Lambda\colon I^{\mathcal{D}}\times
I^{\mathbb{N}}_\infty \to I^{\mathbb{Z}}\times I^{\mathbb{N}}_\infty$ which
closes the commutative diagram
\begin{equation}
\begin{gathered}
\xymatrix{
\mathcal{T}_0(\mathrm{OP}) \ar[r]^{\Psi}
\ar[d]^{\mathcal{A}} &I^{\mathcal{D}}\times I^{\mathbb{N}}_\infty
\ar[r]^{\Lambda} \ar[d]^{\mathcal{A}^\Psi} &I^{\mathbb{Z}}\times I^{\mathbb{N}}_\infty
\ar[d]^{\mathbb{S}\times\mathbb{O}}
\\
\mathcal{T}_0(\mathrm{OP}) \ar[r]^{\Psi} &I^{\mathcal{D}}\times I^{\mathbb{N}}_\infty
\ar[r]^{\Lambda} &I^{\mathbb{Z}}\times I^{\mathbb{N}}_\infty
}
\end{gathered}
\label{eq7}
\end{equation}

If a~measure~$\nu$ on~$I^{\mathcal{D}}\times I^{\mathbb{N}}$ is invariant 
under the action of~$\mathcal{D}$, then the set $I^{\mathcal{D}}\times
I^{\mathbb{N}}_\infty$ has the full $\nu$-measure since the projection
of~$\nu$ to~$I^{\mathbb{N}}$ is the Lebesgue measure~$m$. Thus, the
map~$\Lambda$ is defined $\operatorname{mod}0$ with respect to~$\nu$.

\begin{definition}
\label{d6} 
A~Borel measurable map from the product $I^{\mathcal{D}}\times
I^{\mathbb{N}}_\infty$ to the product $I^{\mathbb{Z}}\times
I^{\mathbb{N}}_\infty$ is said to be \textit{fibrewise} if it preserves the
second component, that is, for every $\alpha \in I^{\mathbb{N}}_\infty$ the
image of the fibre $I^{\mathcal{D}}\times\{\alpha\}$ is contained in the fibre
$I^{\mathbb{Z}} \times\{\alpha\}$.

\end{definition}

We restrict ourselves to the class of fibrewise maps~$\Lambda$ whose fibres
$\Lambda(\,\cdot\,,\alpha)\colon 
I^{\mathcal{D}}\to I^{\mathbb{Z}}$ are induced
for $\alpha \in I^{\mathbb{N}}_\infty$ by some maps $\lambda_\alpha\colon
\mathbb{Z} \to \mathcal{D}$, that is, are changes of the argument of the
configurations:
\begin{equation}
\label{eq8}
\Lambda(f,\alpha) = \bigl(f\circ \lambda_\alpha,\alpha), \qquad f
\in I^{\mathcal{D}}, \quad\alpha \in I^{\mathbb{N}}_\infty.
\end{equation}

For such maps~$\Lambda$ commutative diagram~\eqref{eq7} is obviously
equivalent to the following relation on the family of maps
$(\lambda_\alpha)_{\alpha \in I^{\mathbb{N}}_\infty}$:
\begin{equation}
\label{eq9}
\lambda_{\mathbb{O}[\alpha]}(k)+\tau^{-1}(\mathbb{O}[\alpha]-\alpha)=\lambda_\alpha(k+1),
\qquad \alpha \in I^{\mathbb{N}}_\infty, \quad k \in \mathbb{Z}.
\end{equation}

\begin{lemma}
\label{l1} 
The map\/~$\Lambda \colon I^{\mathcal{D}}\times
I^{\mathbb{N}}_\infty\,{\to}\,I^{\mathbb{Z}}\times I^{\mathbb{N}}_\infty$
defined by equation~\eqref{eq8} and satisfying commutation
relation~\eqref{eq7} is Borel measurable if and only if the map\/
$\lambda\colon\alpha \mapsto \lambda_\alpha(0)$ from\/~$I^{\mathbb{N}}_\infty$
to\/~$\mathcal{D}$ is Borel measurable. Here the map\/~$\Lambda$ is completely
determined by the map\/~$\lambda$.
\end{lemma}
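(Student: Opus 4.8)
The plan is to reduce the whole statement to a single explicit formula expressing the entire family $(\lambda_\alpha)_{\alpha\in I^{\mathbb{N}}_\infty}$ through the one map $\lambda\colon\alpha\mapsto\lambda_\alpha(0)$, and then to check that each direction of the equivalence is a measurability bookkeeping around that formula. First I would solve the cocycle-type relation \eqref{eq9}. Writing it as $\lambda_\alpha(k+1)=\lambda_{\mathbb{O}[\alpha]}(k)+\tau^{-1}(\mathbb{O}[\alpha]-\alpha)$ and iterating, using that $\tau$ is a group isomorphism of $\mathcal{D}$ onto the subgroup of finite sequences (so $\tau^{-1}$ is additive) and that the increments $\mathbb{O}^{j+1}[\alpha]-\mathbb{O}^{j}[\alpha]$ telescope, I obtain by induction on $|k|$ the formula
\[
\lambda_\alpha(k)=\lambda_{\mathbb{O}^k[\alpha]}(0)+\tau^{-1}\bigl(\mathbb{O}^k[\alpha]-\alpha\bigr),\qquad k\in\mathbb{Z},\ \ \alpha\in I^{\mathbb{N}}_\infty,
\]
where all iterates $\mathbb{O}^k[\alpha]$ are defined on $I^{\mathbb{N}}_\infty$ by Remark~\ref{r2} and differ from $\alpha$ in only finitely many coordinates (so $\tau^{-1}$ applies). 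Since $\lambda_{\mathbb{O}^k[\alpha]}(0)=\lambda(\mathbb{O}^k[\alpha])$, this recovers every $\lambda_\alpha(k)$, and hence, via \eqref{eq8}, the whole map $\Lambda$, from $\lambda$ alone; this is the asserted complete determination.

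For the implication ``$\lambda$ Borel $\Rightarrow\Lambda$ Borel'' I would argue as follows. Each $\alpha\mapsto\mathbb{O}^k[\alpha]$ is Borel on $I^{\mathbb{N}}_\infty$, and $\alpha\mapsto\tau^{-1}(\mathbb{O}^k[\alpha]-\alpha)$ is Borel into the countable discrete group $\mathcal{D}$; combined with the displayed formula, each $\alpha\mapsto\lambda_\alpha(k)$ is Borel into $\mathcal{D}$. To conclude that $\Lambda$ is Borel it suffices to check that every coordinate $(f,\alpha)\mapsto(f\circ\lambda_\alpha)(k)=f(\lambda_\alpha(k))$ is Borel. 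Here is the only genuinely technical point: the evaluation site $\lambda_\alpha(k)$ itself moves with $\alpha$. I would handle it by partitioning $I^{\mathbb{N}}_\infty=\bigsqcup_{d\in\mathcal{D}}B_d^{(k)}$ with $B_d^{(k)}=\{\alpha:\lambda_\alpha(k)=d\}$, a countable Borel partition; on $I^{\mathcal{D}}\times B_d^{(k)}$ the map $(f,\alpha)\mapsto f(\lambda_\alpha(k))=f(d)$ is a continuous coordinate projection, and gluing these over the countable partition keeps the map Borel.

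For the converse ``$\Lambda$ Borel $\Rightarrow\lambda$ Borel'', since $\mathcal{D}$ is countable and discrete it is enough to show that each level set $\{\alpha:\lambda(\alpha)=d\}$ is Borel. For $d\in\mathcal{D}$ let $e_d\in I^{\mathcal{D}}$ be the configuration with $e_d(d)=1$ and $e_d(x)=0$ for $x\neq d$. Feeding it into \eqref{eq8} gives $\Lambda(e_d,\alpha)=(e_d\circ\lambda_\alpha,\alpha)$, whose first component evaluated at $0$ equals $e_d(\lambda_\alpha(0))$, which is $1$ precisely when $\lambda(\alpha)=d$ and $0$ otherwise. Since $\alpha\mapsto(e_d,\alpha)$ is continuous, evaluation of the first coordinate at position $0$ is continuous, and $\Lambda$ is Borel by hypothesis, the composite $\alpha\mapsto e_d(\lambda_\alpha(0))$ is Borel; hence $\{\alpha:\lambda(\alpha)=d\}$ is Borel and $\lambda$ is Borel measurable.

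The main obstacle is the one measurability assertion in the forward direction, namely that the \emph{diagonal} evaluation $(f,\alpha)\mapsto f(\lambda_\alpha(k))$ is Borel even though the point at which $f$ is read off depends on $\alpha$; everything else is formal manipulation of \eqref{eq9} and continuity of coordinate projections. The countability of $\mathcal{D}$ is exactly what makes the partition argument succeed, so I would stress that the discreteness of the target group is essential rather than incidental to the proof.
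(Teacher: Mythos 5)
Your proposal is correct and follows essentially the same route as the paper: both directions rest on decomposing by the (countably many) values of $\lambda_\alpha(k)$ in $\mathcal{D}$ for the forward implication and on evaluating $\Lambda$ at the indicator configuration of a single group element for the converse, with the recursion \eqref{eq9} propagating measurability from $\lambda_\alpha(0)$ to all $\lambda_\alpha(k)$. Your explicit closed form $\lambda_\alpha(k)=\lambda(\mathbb{O}^k[\alpha])+\tau^{-1}(\mathbb{O}^k[\alpha]-\alpha)$ is a correct (and slightly more explicit) version of what the paper leaves implicit in its appeal to \eqref{eq9}.
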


\begin{proof}
Let $k \in \mathbb{Z}$, \,$w^0\in I$. \,Let $W=\{w \in I^{\mathbb{Z}}\colon
w(k)=w^0\}$. \,Then
\begin{align}
\Lambda^{-1}(W\times I^{\mathbb{N}}_\infty) = &\bigcup_{\alpha
\in I^{\mathbb{N}}_\infty} \{f \in I^{\mathcal{D}}\colon
f(\lambda_\alpha(k))=w^0\}\times
\{\alpha\}
\nonumber
\\
=&\bigcup_{g\in \mathcal{D}} \{f \in I^{\mathcal{D}}\colon f(g)=w^0\} \times
\{\alpha \in I^{\mathbb{N}}_\infty\colon \lambda_\alpha(k)=g\}.
\label{eq10}
\end{align}
The measurability of~$\Lambda$ is equivalent to the condition that the
set in formula~\eqref{eq10}~is measurable for every $k \in \mathbb{Z}$,
\,$w^0\in I$.

If~$\Lambda$ is measurable, then the set in formula~\eqref{eq10} is
measurable when $k=\nobreak0$ and $w^0=\nobreak1$. Let $g_0 \in \mathcal{D}$ and choose
$f^0\in I^{\mathcal{D}}$ in such a~way that $f^0(g)=\nobreak1$ holds for $g \in
\mathcal{D}$ only when $g=g_0$. Then the set
\begin{align}
&\{\alpha \in I^{\mathbb{N}}_\infty\colon (f^0,\alpha) \in
\Lambda^{-1}(W\times I^{\mathbb{N}}_\infty)\} = \{\alpha
\in I^{\mathbb{N}}_\infty\colon
f^0\circ\lambda_\alpha \in W\}
\nonumber
\\[1pt]
&\qquad=\{\alpha \in I^{\mathbb{N}}_\infty\colon f^0(\lambda_\alpha(0))=1\}=
\{\alpha \in I^{\mathbb{N}}_\infty\colon \lambda_\alpha(0)=g_0\}
\nonumber
\\[1pt]
&\qquad=\{\alpha \in I^{\mathbb{N}}_\infty\colon \lambda(\alpha)=g_0\}
\label{eq11}
\end{align}
is measurable. Thus, $\lambda$ is measurable.

If~$\lambda$ is measurable, then it follows from formula~\eqref{eq9} 
that for every $k \in\nobreak \mathbb{Z}$ the map~$\alpha\mapsto
\lambda_{\alpha}(k)$ from $I^{\mathbb{N}}_\infty$ to~$\mathcal{D}$ is also
measurable. However, in this case, the set on the right-hand side of
formula~\eqref{eq10} is a~countable union of measurable sets, that is, it is
measurable. Hence $\Lambda$ is measurable.

The last assertion of the lemma is trivial, since the maps~$\lambda_\alpha$ are
uniquely defined by~$\lambda$ via formula~\eqref{eq9}. 
\end{proof}

Choose a~map~$\lambda \colon I^{\mathbb{N}}_\infty \to \mathcal{D}$
which takes everything to the zero element of the group~$\mathcal{D}$. By
relations~\eqref{eq9}, this map generates a~family of maps $\lambda_\alpha
\colon \mathbb{Z}\to \mathcal{D}$, $\alpha \in\nobreak I^{\mathbb{N}}_\infty$, 
which, by formula~\eqref{eq8}, in turn defines a~fibrewise map~$\Lambda \colon
I^{\mathcal{D}}\times I^{\mathbb{N}}_\infty \to I^{\mathbb{Z}}\times\nobreak
I^{\mathbb{N}}_\infty$. We fix the symbols $\lambda_\alpha$ and~$\Lambda$ for
these objects.

Thus, the action of the adic transformation~$\mathcal{A}$ on the path space
$\mathcal{T}(\mathrm{OP})$ of the~graph~$\mathrm{OP}$ is Borel isomorphic
to the action of the transformation $\mathbb{S}\times\mathbb{O}$ on~the space
$I^{\mathbb{Z}}\times I^{\mathbb{N}}$.

\vskip1pt

This implies the following theorem on the universal adic approximation 
of automorphisms.

\begin{theorem}
\label{t2} 
Let\/~$T$ be an automorphism of a~Lebesgue space\/ $(X,p)$ with
a~binomial generator. Then there is a~measure\/ $\mu \in
\operatorname{Inv}(\mathrm{OP})$ such that the adic
transformation\/~$\mathcal{A}$ on the path space\/ $\mathcal{T}(\mathrm{OP})$
of the graph\/~$\mathrm{OP}$ with the measure\/~$\mu$ is isomorphic to the
transformation\/ $T\times \mathbb{O}$ on the direct product\/ $X \times
I^{\mathbb{N}}$ with the measure\/ $p\times\nobreak m$.
\end{theorem}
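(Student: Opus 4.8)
The plan is to read off \textbf{Theorem~\ref{t2}} as an immediate consequence of two ingredients: the Borel conjugacy established just above (the composite $\Lambda\circ\Psi$, which by diagram~\eqref{eq7} carries the adic transformation $\mathcal{A}$ to $\mathbb{S}\times\mathbb{O}$ on $I^{\mathbb{Z}}\times I^{\mathbb{N}}$), together with the standard symbolic realization of an automorphism possessing a binomial generator. Concretely, I would transport a suitable product measure from $I^{\mathbb{Z}}\times I^{\mathbb{N}}$ back to $\mathcal{T}(\mathrm{OP})$ along $\Lambda\circ\Psi$, and then check that the resulting measure lies in $\operatorname{Inv}(\mathrm{OP})$.

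First I would unwind the hypothesis. A binomial generator for $T$ is precisely a two-set generating partition, so the itinerary map sending $x$ to the sequence whose $n$-th coordinate records which of the two parts contains $T^{n}x$ is an isomorphism $\operatorname{mod}0$ of $(X,p,T)$ onto $(I^{\mathbb{Z}},q,\mathbb{S})$, where $\mathbb{S}$ is the left shift and $q$ is the ($\mathbb{S}$-invariant) pushforward of~$p$. Taking the direct product with the identity on $(I^{\mathbb{N}},m,\mathbb{O})$ then yields an isomorphism of $(X\times I^{\mathbb{N}},\,p\times m,\,T\times\mathbb{O})$ onto $(I^{\mathbb{Z}}\times I^{\mathbb{N}},\,q\times m,\,\mathbb{S}\times\mathbb{O})$.

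Next I would produce the central measure. Since $I^{\mathbb{N}}_0$ is countable we have $m(I^{\mathbb{N}}_\infty)=1$, hence $(q\times m)(I^{\mathbb{Z}}\times I^{\mathbb{N}}_\infty)=1$, so the product measure is carried by the range of the Borel isomorphism $\Lambda\circ\Psi\colon \mathcal{T}_0(\mathrm{OP})\to I^{\mathbb{Z}}\times I^{\mathbb{N}}_\infty$. I then set $\mu:=\bigl((\Lambda\circ\Psi)^{-1}\bigr)_*(q\times m)$, a Borel probability measure on $\mathcal{T}(\mathrm{OP})$ concentrated on $\mathcal{T}_0(\mathrm{OP})$. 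Because $q$ is $\mathbb{S}$-invariant and $m$ is $\mathbb{O}$-invariant, $q\times m$ is invariant under $\mathbb{S}\times\mathbb{O}$; and since $\Lambda\circ\Psi$ conjugates $\mathcal{A}$ to $\mathbb{S}\times\mathbb{O}$, the measure $\mu$ is invariant under the adic transformation $\mathcal{A}$. By the criterion recalled in \S\,\ref{ss2.1} that invariance under the adic transformation is equivalent to centrality, this gives $\mu\in\operatorname{Inv}(\mathrm{OP})$.

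Chaining the two isomorphisms then shows that $\mathcal{A}$ on $(\mathcal{T}(\mathrm{OP}),\mu)$ is isomorphic to $\mathbb{S}\times\mathbb{O}$ on $(I^{\mathbb{Z}}\times I^{\mathbb{N}},q\times m)$, and hence to $T\times\mathbb{O}$ on $(X\times I^{\mathbb{N}},p\times m)$, which is exactly the assertion. The one genuinely substantial input is the Borel conjugacy $\Lambda\circ\Psi$, which is already in hand from Theorem~\ref{t1}, Proposition~\ref{p1}, and the construction of $\Lambda$; so the remaining point of care is purely measure-theoretic bookkeeping, namely verifying that everything is supported on $\mathcal{T}_0(\mathrm{OP})$ (equivalently on $I^{\mathbb{Z}}\times I^{\mathbb{N}}_\infty$), so that $\Lambda$, which is only defined $\operatorname{mod}0$, acts as a genuine measure isomorphism, and then transporting the invariance through the conjugacy. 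I expect this last verification, rather than any new idea, to be the main thing to get right.
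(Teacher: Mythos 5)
Your proposal is correct and follows essentially the same route as the paper: the paper derives Theorem~\ref{t2} directly from the Borel conjugacy $\Lambda\circ\Psi$ between $\mathcal{A}$ and $\mathbb{S}\times\mathbb{O}$, pushing the product of the symbolic measure of $T$ with $m$ back to $\mathcal{T}(\mathrm{OP})$, exactly as you do. Your additional bookkeeping (concentration on $I^{\mathbb{Z}}\times I^{\mathbb{N}}_\infty$, centrality via invariance under $\mathcal{A}$) just fills in the details the paper leaves implicit.
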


\begin{definition}
\label{d7} 
We denote by~$\mathbb{M}$ the set of Borel probability measures
on the space $I^{\mathbb{Z}}\times I^{\mathbb{N}}$ that are invariant 
under the transformation $\mathbb{S}\times\mathbb{O}$ and 
by~$\operatorname{Ex}(\mathbb{M})$ the set of ergodic measures in~$\mathbb{M}$,
which is the Choquet boundary consisting of the extreme points.
\end{definition}

\begin{remark}
\label{r4} 
The map~$\Lambda \circ \Psi$ is a~bijection between the set
of central measures $\operatorname{Inv}(\mathrm{OP})$ on the path space
$\mathcal{T}(\mathrm{OP})$ of the graph~$\mathrm{OP}$ and the set~$\mathbb{M}$.
\end{remark}

The following remark gives an explicit formula for the fibrewise map~$\Lambda$.

\begin{remark}
\label{r5} 
For every $\alpha \in I^{\mathbb{N}}_\infty$ the
map~$\lambda_\alpha$ satisfies the relation
\begin{equation}
\label{eq12}
\lambda_\alpha \biggl(\sum_{i=0}^n
(-1)^{\alpha_{i+1}} \beta_i 2^i\biggr) = \sum_{i=0}^n
\beta_i g_i,
\end{equation}
where $n\geq 0$ and $\beta_i \in\{0,1\}$.

The pre-image of the subgroup $\mathcal{D}_n$ under the map~$\lambda_\alpha$ is
the segment of integers $\{-a_n,\dots, -a_n+2^n-1\}$, where
$a_n=\sum_{i=1}^{n}\alpha_i 2^{i-1}$. Here $\lambda_\alpha^{-1}(\mathcal{D}_n)$
is the left half of~$\lambda_\alpha^{-1}(\mathcal{D}_{n+1})$
if~$\alpha_{n+1}=\nobreak0$ and the right half if~$\alpha_{n+1}=\nobreak1$.
\end{remark}

\section({Invariant measures
on \$I\000\136\{\000\134mathbb\{Z\}\}\000\134times
I\000\136\{\000\134mathbb\{N\}\}\$}){Invariant measures
on~$I^{\mathbb{Z}}\times I^{\mathbb{N}}$}
\label{s5}

\vskip1pt

\subsection{Direct products and measures of periodic type}
\label{ss5.1}
In this subsection, we study the measures on~$I^{\mathbb{Z}}\times
I^{\mathbb{N}}$ that are invariant under the action
of~$\mathbb{S}\times\mathbb{O}$. We recall that we have denoted the set of these
measures by~$\mathbb{M}$. This set of measures is taken under the
map~$\Lambda^{-1}$ to the set of all measures on~$I^{\mathcal{D}}\times
I^{\mathbb{N}}$ invariant under the action $\operatorname{diag}$ of the
group~$\mathcal{D}$. Under the map~$\Psi^{-1}\circ \Lambda^{-1}$, this set
passes into the set of central measures on the path space
$\mathcal{T}(\mathrm{OP})$ of the graph~$\mathrm{OP}$ of ordered pairs.

The simplest examples of measures in~$\mathbb{M}$ are the direct products.
If~$\eta$ is a~Borel probability measure on~$I^{\mathbb{Z}}$ invariant under the
shift~$\mathbb{S}$, then its direct product with the Lebesgue measure~$m$
on~$I^{\mathbb{N}}$, that is, the measure $\eta\times m$, is invariant 
under the transformation $\mathbb{S}\times\mathbb{O}$.

\begin{definition}
\label{d8} 
Let $\nu$ be a~measure on~$I^{\mathbb{Z}}\times I^{\mathbb{N}}$.
We denote by~$\mathcal{P}[\nu]$ the projection of~$\nu$
to~$I^{\mathbb{Z}}$. Let $k \geq 0$ and $0\leq r \leq 2^k-1$ and let
$A_{r,k}=\{\alpha \in I^{\mathbb{N}}\colon \sum_{i=1}^{k}\alpha_i 2^{i-1} =r
\}$ be a~cylinder in~$I^{\mathbb{N}}$. Let $\mathcal{P}_{r,k}[\nu]$ denote
the normalized projection to~$I^{\mathbb{Z}}$ of the restriction of the
measure~$\nu$ to the set $I^{\mathbb{Z}}\times A_{r,k}$, \,$\theta_k[\nu]$~the
measure $\mathcal{P}_{0,k}[\nu]$, and~$\Theta[\nu]$ the sequence of measures
$(\theta_k[\nu])_{k \geq 0}$.
\end{definition}

A~slightly more complicated example of a~measure in~$\mathbb{M}$ is given
by the following construction, and we call it a~\textit{measure of periodic type}.

\begin{definition}
\label{d9} 
We denote by~$\mathcal{M}_k$, $k \geq 0$, the set of all Borel
probability measures on~$I^{\mathbb{Z}}$ that are invariant
under~$\mathbb{S}^{2^k}$, and by~$\operatorname{Ex}(\mathcal{M}_k)$ the set
of extreme points in~$\mathcal{M}_k$.
\end{definition}

\begin{remark}
\label{r6} 
For every $k \geq 0$ the set~$\mathcal{M}_k$ with the topology
of weak convergence of measures is a~Poulsen simplex, that is, the set
$\operatorname{Ex}(\mathcal{M}_k)$ of its extreme points is dense
in~$\mathcal{M}_k$.
\end{remark}

\begin{definition}
\label{d10} 
Let $k\geq 0$ and $\eta \in \mathcal{M}_k$. We call the measure
\begin{equation}
\label{eq13}
D_k[\eta] = \sum_{i=0}^{2^k-1} \mathbb{S}^i \eta \times
(\chi_{\rule{0pt}{7pt}A_{i,
k}} m)
\end{equation}
on~$I^{\mathbb{Z}}\times I^{\mathbb{N}}$ a~\textit{measure of periodic type\/~$k$
with base\/~$\eta$}.
\end{definition}

We note that the measures of periodic type~$0$ are simply direct products.

\begin{remark}
\label{r7} 
If~$k \geq 0$ and~$\eta \in \mathcal{M}_k$, then $D_k[\eta] \in
\mathbb{M}$.
\end{remark}

\begin{remark}
\label{r8} 
When $0\leq k \leq n$ and $\eta \in \mathcal{M}_k$ the equation $
D_{n}[\eta] = D_{k}[\eta]$ holds.
\end{remark}

\begin{definition}
\label{d11} 
We say that a~measure~$\nu$ on~$I^{\mathbb{Z}}\times
I^{\mathbb{N}}$ \textit{is a~measure of periodic type~$k$} if~$k$ is the least
possible number for which there is a~measure $\eta \in \mathcal{M}_k$ such that
$\nu = D_k[\eta]$. We denote by~$\mathbb{M}_k$ the set of all measures
of periodic type not greater than~$k$ on~$I^{\mathbb{Z}}\times I^{\mathbb{N}}$ and
by~$\operatorname{Ex}(\mathbb{M}_{k})$ the set of extreme points
of~$\mathbb{M}_k$.

We say that a~measure~$\nu$ is a~\textit{measure of a~periodic type} if it is
a~measure of periodic type~$k$ for some $k \geq 0$.

A~measure $\nu \in \mathbb{M}$ which is not a~measure of periodic type is said
to be \textit{aperiodic}. We denote by~$\mathbb{M}_\infty$ the set of aperiodic
measures.
\end{definition}

\begin{remark}
\label{r9} 
The set~$\mathbb{M}$ of invariant measures on~$I^{\mathbb{Z}}\times
I^{\mathbb{N}}$ can be decomposed into the disjoint union of the set
of aperiodic measures and the sets of measures of periodic type~$k$, $k \geq 0$:
$$
\mathbb{M} = \mathbb{M}_\infty \cup \mathbb{M}_{0}\cup
\biggl(\bigcup_{k\geq 1} (\mathbb{M}_{k}\setminus \mathbb{M}_{k-1})\biggr).
$$

For every $k \geq 0$ the map~$D_k$ is an affine homeomorphism between the
sets~$\mathcal{M}_{k}$ and~$\mathbb{M}_k$, and hence a~bijection between
$\operatorname{Ex}(\mathcal{M}_{k})$ and $\operatorname{Ex}(\mathbb{M}_{k})$.
\end{remark}

\subsection{Approximation by measures of periodic type}
\label{ss5.2}
We proceed with the study of measures in~$\mathbb{M}$ of general
form.

\begin{remark}
\label{r10} 
For every measure $\nu \in \mathbb{M}$ the family of measures
$\mathcal{P}_{r,k}[\nu]$, \,$k \geq 0$, \,$0\leq r <2^k$, on~$I^{\mathbb{Z}}$
corresponding to~$\nu$ satisfies the following relations:
\begin{gather}
\mathbb{S}\mathcal{P}[\nu]=\mathcal{P}[\nu]; \qquad
\mathbb{S}^{2^k}\mathcal{P}_{r,k}[\nu] = \mathcal{P}_{r,k}[\nu];
\label{eq14}
\\[2pt]
\mathbb{S}\mathcal{P}_{r,k}[\nu]= \mathcal{P}_{r+1,k}[\nu],
\quad 0\leq r <2^k-1; \qquad
\mathbb{S}\mathcal{P}_{2^k-1,k}[\nu] = \mathcal{P}_{0,k}[\nu];
\label{eq15}
\\
\label{eq16}
\mathcal{P}_{r,k}[\nu] =
\frac{1}{2}\bigl(\mathcal{P}_{r,k+1}[\nu]+\mathcal{P}_{r+2^k,k+1}[\nu]\bigr)=
\frac{1}{2}\bigl(\mathcal{P}_{r,k+1}[\nu]+\mathbb{S}^{2^k}\mathcal{P}_{r,k+1}[\nu]\bigr).
\end{gather}
\end{remark}

\begin{corollary}
\label{c1} 
Let\/ $\nu \in \mathbb{M}$. The sequence of measures\/
$\theta_{k}[\nu]$, $k \geq 0$, uniquely determines the whole family of measures\/
$\mathcal{P}_{r,k}[\nu]$, and hence also the measure\/~$\nu$. Moreover, the
following relations hold:
\begin{equation}
\label{eq17}
\mathbb{S}^{2^k}\theta_k=\theta_k, \qquad
\theta_{k} = \frac{1}{2}\bigl(\theta_{k+1}+\mathbb{S}^{2^k}\theta_{k+1}\bigr),
\quad k \geq 0.
\end{equation}
Further, if a~sequence of measures\/~$\theta_k$, $k \,{\geq}\, 0$,
on\/~$I^{\mathbb{Z}}$ satisfies relations~\eqref{eq17}, then there is
a~unique measure\/ $\nu \in \mathbb{M}$ for which\/ $\theta_k=\theta_k[\nu]$, 
\,$k \geq 0$.
\end{corollary}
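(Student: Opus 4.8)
The plan is to prove all four assertions by working on the generating family of clopen sets $B\times A_{r,k}$, where $B$ ranges over cylinders in $I^{\mathbb{Z}}$ and the $A_{r,k}$ are as in Definition~\ref{d8}, and by exploiting that the projection to $I^{\mathbb{N}}$ of any $\nu\in\mathbb{M}$ is the Lebesgue measure $m$ (its $I^{\mathbb{N}}$-projection is $\mathbb{O}$-invariant, hence equals $m$ by unique ergodicity of the odometer; cf.\ \S\,\ref{ss3.3}). Consequently $\nu(I^{\mathbb{Z}}\times A_{r,k})=m(A_{r,k})=2^{-k}$, and therefore $\nu(B\times A_{r,k})=2^{-k}\mathcal{P}_{r,k}[\nu](B)$.

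For the direct implication I would first recover the whole family from the diagonal sequence: iterating the shift relation~\eqref{eq15} gives $\mathcal{P}_{r,k}[\nu]=\mathbb{S}^r\theta_k[\nu]$ for $0\leq r<2^k$, so the $\theta_k[\nu]$ determine all $\mathcal{P}_{r,k}[\nu]$. Since the sets $B\times A_{r,k}$ generate the Borel $\sigma$-algebra of $I^{\mathbb{Z}}\times I^{\mathbb{N}}$ and $\nu$ is prescribed on them by the formula above, the sequence $\theta_k[\nu]$ determines $\nu$. Relations~\eqref{eq17} are then exactly the special cases $r=0$ of~\eqref{eq14} and of~\eqref{eq16}.

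For the converse, given probability measures $\theta_k$ on $I^{\mathbb{Z}}$ satisfying~\eqref{eq17}, I would set $\mathcal{P}_{r,k}:=\mathbb{S}^r\theta_k$ for $0\leq r<2^k$ (forced by the above) and define a set function on clopen cylinders by $\nu(B\times A_{r,k}):=2^{-k}\mathcal{P}_{r,k}(B)$, extended additively. The point to verify for well-definedness is consistency under the refinement $A_{r,k}=A_{r,k+1}\sqcup A_{r+2^k,k+1}$: writing $\mathcal{P}_{r+2^k,k+1}=\mathbb{S}^{2^k}\mathcal{P}_{r,k+1}$ and using the averaging identity $\theta_k=\tfrac12(\theta_{k+1}+\mathbb{S}^{2^k}\theta_{k+1})$ from~\eqref{eq17}, the equality $\mathcal{P}_{r,k}=\tfrac12(\mathcal{P}_{r,k+1}+\mathcal{P}_{r+2^k,k+1})$ follows. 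As the ambient space is compact and the cylinders are clopen, finite additivity already yields a premeasure, and Carath\'eodory's theorem produces a Borel probability measure $\nu$ (of total mass $\sum_{r<2^k}2^{-k}=1$). It remains to check $\mathbb{S}\times\mathbb{O}$-invariance on generators, together with $\theta_k[\nu]=\theta_k$; the latter is immediate from $\nu(B\times A_{0,k})=2^{-k}\theta_k(B)$ and $\nu(I^{\mathbb{Z}}\times A_{0,k})=2^{-k}$, and uniqueness then follows from the direct implication.

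The step I expect to require the most care is the invariance verification, through the bookkeeping of the odometer's carry. Modulo the null set $I^{\mathbb{N}}_0$ the odometer acts on the first $k$ coordinates as addition of $1$ modulo $2^k$, so $\mathbb{O}^{-1}(A_{r,k})=A_{(r-1)\bmod 2^k,\,k}$. Testing invariance on $B\times A_{r,k}$ splits into two cases: for $r\geq 1$ no wrap-around occurs and the identity reduces to the shift-covariance $\mathcal{P}_{r-1,k}=\mathbb{S}^{-1}\mathcal{P}_{r,k}$, while for $r=0$ the carry past the $k$-th digit sends $A_{0,k}$ to $A_{2^k-1,k}$ and forces the use of the periodicity $\mathbb{S}^{2^k}\theta_k=\theta_k$ (equivalently the wrap-around $\mathbb{S}\mathcal{P}_{2^k-1,k}=\mathcal{P}_{0,k}$). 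Thus each relation in~\eqref{eq17} enters exactly once and for a structural reason: the averaging identity makes the construction coherent across levels $k$, and the periodicity identity is precisely what the boundary case $r=0$ of invariance needs. The measure-theoretic extension itself is routine given compactness, so no delicate countable-additivity argument is needed beyond the clopenness of cylinders.
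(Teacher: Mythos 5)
Your argument is correct, and it is essentially the paper's intended route: the paper states Corollary~\ref{c1} without proof as an immediate consequence of the relations in Remark~\ref{r10} (and packages the existence/uniqueness direction as the projective-limit statement of Lemma~\ref{l2}), and your write-up is the natural fleshing-out of exactly that — recovering $\mathcal{P}_{r,k}[\nu]=\mathbb{S}^r\theta_k[\nu]$ from~\eqref{eq15}, determining $\nu$ on the clopen cylinders $B\times A_{r,k}$ via the fact that the $I^{\mathbb{N}}$-projection is $m$, and for the converse checking consistency, compactness-based extension, and invariance (where the two relations in~\eqref{eq17} enter exactly as you describe). No gaps.
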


\begin{definition}
\label{d12} 
We denote by~$\operatorname{proj}_k$ the projection of the
set~$\mathcal{M}_{k+1}$ to~$\mathcal{M}_{k}$ given by the formula
$$
\operatorname{proj}_k \theta = \frac{1}{2}\bigl(\theta+\mathbb{S}^{2^k}\theta\bigr),
\qquad \theta \in \mathcal{M}_{k+1}.
$$
\end{definition}

\begin{lemma}
\label{l2} 
The map\/~$\Theta$ is an affine homeomorphism of the space\/~$\mathbb{M}$
of measures with the weak topology onto the projective limit\/
$\varprojlim (\mathcal{M}_k,\operatorname{proj}_k)$.
\end{lemma}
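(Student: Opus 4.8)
We must show that $\Theta \colon \nu \mapsto (\theta_k[\nu])_{k\ge 0}$ is an affine homeomorphism from $\mathbb{M}$ (with the weak topology) onto the projective limit $\varprojlim(\mathcal{M}_k, \operatorname{proj}_k)$. The plan is to verify the following in order: (i) $\Theta$ is well-defined, i.e. lands in the projective limit; (ii) $\Theta$ is a bijection; (iii) $\Theta$ is affine; and (iv) $\Theta$ and its inverse are continuous. Most of the hard work has already been packaged into Corollary~\ref{c1}, so this lemma is largely a matter of assembling those facts and supplying the topological argument.

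**Well-definedness and bijectivity.** First I would observe that a point of $\varprojlim(\mathcal{M}_k,\operatorname{proj}_k)$ is precisely a sequence $(\theta_k)_{k\ge0}$ with $\theta_k \in \mathcal{M}_k$ and $\operatorname{proj}_k\theta_{k+1} = \theta_k$; by Definition~\ref{d12} the latter is exactly the relation $\theta_k = \tfrac12(\theta_{k+1}+\mathbb{S}^{2^k}\theta_{k+1})$. Thus the defining relations~\eqref{eq17} of Corollary~\ref{c1} say exactly that $\Theta[\nu]$ lies in the projective limit (the invariance $\mathbb{S}^{2^k}\theta_k=\theta_k$ guarantees $\theta_k\in\mathcal{M}_k$), so $\Theta$ maps into $\varprojlim(\mathcal{M}_k,\operatorname{proj}_k)$. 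For bijectivity, Corollary~\ref{c1} already does both halves: the first part states that $(\theta_k[\nu])_k$ determines $\nu$ uniquely, giving injectivity; the last part states that any sequence $(\theta_k)_k$ satisfying~\eqref{eq17}, hence any point of the projective limit, is realized by a unique $\nu \in \mathbb{M}$, giving surjectivity. So $\Theta$ is a bijection onto the projective limit.

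**Affineness.** Affineness is immediate from the construction in Definition~\ref{d8}: for a convex combination $\nu = t\nu_1 + (1-t)\nu_2$ the restriction to $I^{\mathbb{Z}}\times A_{0,k}$, its normalized projection $\theta_k[\nu]=\mathcal{P}_{0,k}[\nu]$, depends linearly on $\nu$ (the normalizing factor is $\nu(I^{\mathbb{Z}}\times A_{0,k})=2^{-k}$, independent of $\nu\in\mathbb{M}$ since the $I^{\mathbb{N}}$-projection is always Lebesgue). Hence $\theta_k[t\nu_1+(1-t)\nu_2] = t\,\theta_k[\nu_1]+(1-t)\,\theta_k[\nu_2]$ coordinatewise, which is exactly affineness of $\Theta$.

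**Continuity in both directions — the main point.** The remaining and least mechanical step is the homeomorphism property. Continuity of $\Theta$ I would get by testing against cylinder functions: if $\nu_j \to \nu$ weakly in $\mathbb{M}$, then since $A_{0,k}$ is a clopen cylinder of fixed mass $2^{-k}$, integrating a continuous function on $I^{\mathbb{Z}}$ against $\theta_k[\nu_j]=2^k\,\mathcal{P}[\nu_j|_{I^{\mathbb{Z}}\times A_{0,k}}]$ converges to the corresponding integral for $\theta_k[\nu]$; convergence in each coordinate $\theta_k$ is exactly convergence in the product topology on $\varprojlim$. For the inverse I would exploit compactness: $\mathbb{M}$ is a weakly closed subset of the compact space of Borel probability measures on the compact metrizable space $I^{\mathbb{Z}}\times I^{\mathbb{N}}$, hence $\mathbb{M}$ is compact and metrizable, and the projective limit is Hausdorff; a continuous bijection from a compact space to a Hausdorff space is automatically a homeomorphism, so $\Theta^{-1}$ is continuous as well. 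The one place demanding genuine care — the step I expect to be the main obstacle — is continuity of $\Theta$ itself, specifically that weak convergence of the full measures transfers to weak convergence of the normalized conditional projections $\theta_k$; this uses that $A_{0,k}$ is clopen (so its indicator is continuous) together with the uniform mass $2^{-k}$, which is where the fixed Lebesgue $I^{\mathbb{N}}$-marginal is essential. Once that is in hand, the compactness argument closes the proof.

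\begin{proof}
A point of $\varprojlim(\mathcal{M}_k,\operatorname{proj}_k)$ is a sequence $(\theta_k)_{k\ge0}$ with $\theta_k\in\mathcal{M}_k$ satisfying $\operatorname{proj}_k\theta_{k+1}=\theta_k$, which by Definition~\ref{d12} means $\theta_k=\tfrac12(\theta_{k+1}+\mathbb{S}^{2^k}\theta_{k+1})$. By Corollary~\ref{c1} the sequence $\Theta[\nu]=(\theta_k[\nu])_{k\ge0}$ satisfies relations~\eqref{eq17}, the first of which says $\theta_k[\nu]\in\mathcal{M}_k$ and the second of which is precisely $\operatorname{proj}_k\theta_{k+1}[\nu]=\theta_k[\nu]$. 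Hence $\Theta$ maps $\mathbb{M}$ into $\varprojlim(\mathcal{M}_k,\operatorname{proj}_k)$. The first assertion of Corollary~\ref{c1} gives injectivity, and its last assertion gives surjectivity onto the projective limit, so $\Theta$ is a bijection.

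For affineness, note that every $\nu\in\mathbb{M}$ has Lebesgue $I^{\mathbb{N}}$-marginal, so $\nu(I^{\mathbb{Z}}\times A_{0,k})=m(A_{0,k})=2^{-k}$ is independent of $\nu$. Thus $\theta_k[\nu]=\mathcal{P}_{0,k}[\nu]=2^k\cdot(\text{projection to }I^{\mathbb{Z}}\text{ of }\nu|_{I^{\mathbb{Z}}\times A_{0,k}})$ depends linearly on $\nu$, so $\theta_k[t\nu_1+(1-t)\nu_2]=t\,\theta_k[\nu_1]+(1-t)\,\theta_k[\nu_2]$ and $\Theta$ is affine.

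It remains to prove that $\Theta$ is a homeomorphism. Since $I^{\mathbb{Z}}\times I^{\mathbb{N}}$ is compact metrizable, the space of Borel probability measures on it is compact metrizable in the weak topology; the set $\mathbb{M}$ of $\mathbb{S}\times\mathbb{O}$-invariant measures is weakly closed, hence compact. The projective limit $\varprojlim(\mathcal{M}_k,\operatorname{proj}_k)$ carries the product topology and is Hausdorff. We show $\Theta$ is continuous; then, being a continuous bijection from a compact space to a Hausdorff space, it is automatically a homeomorphism. Let $\nu_j\to\nu$ weakly in $\mathbb{M}$, and fix $k\ge0$ and a continuous $\varphi\colon I^{\mathbb{Z}}\to\mathbb{R}$. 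Since $A_{0,k}$ is a clopen cylinder in $I^{\mathbb{N}}$, the function $(w,\alpha)\mapsto\varphi(w)\chi_{A_{0,k}}(\alpha)$ is continuous on $I^{\mathbb{Z}}\times I^{\mathbb{N}}$, so
\begin{equation*}
\int \varphi\,d\theta_k[\nu_j]
= 2^k\!\int \varphi(w)\chi_{A_{0,k}}(\alpha)\,d\nu_j(w,\alpha)
\xrightarrow[j\to\infty]{} 2^k\!\int \varphi(w)\chi_{A_{0,k}}(\alpha)\,d\nu(w,\alpha)
= \int \varphi\,d\theta_k[\nu].
\end{equation*}
Thus $\theta_k[\nu_j]\to\theta_k[\nu]$ weakly for every $k$, which is exactly convergence $\Theta[\nu_j]\to\Theta[\nu]$ in the product topology of the projective limit. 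Hence $\Theta$ is continuous, and the proof is complete.
\end{proof}
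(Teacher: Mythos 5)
The paper states Lemma~\ref{l2} without any proof, treating it as an immediate consequence of Corollary~\ref{c1} together with standard facts, and your write-up supplies exactly those omitted details correctly: well-definedness and bijectivity come from Corollary~\ref{c1}, affineness from the uniform normalizing constant $2^{-k}$ (since the $I^{\mathbb{N}}$-marginal of every measure in $\mathbb{M}$ is Lebesgue), continuity of $\Theta$ from the clopenness of the cylinders $A_{0,k}$, and continuity of $\Theta^{-1}$ from compactness of $\mathbb{M}$ plus the Hausdorff property of the projective limit. Your proof is correct and is the natural argument the authors intended.
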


\begin{definition}
\label{d13} 
We say that a~sequence of measures $(\theta_k)_{k \geq 0}$
on~$I^{\mathbb{Z}}$ \textit{stabilizes at a~moment~$n$}, $n \geq 0$,
if~$\theta_k=\theta_{k+1}$ for $k \geq n$ and $\theta_{n-1}\ne \theta_n$.
\end{definition}

\begin{remark}
\label{r11} 
For every $k \geq 0$ the map~$\Theta$ is a~bijection between the
set $\mathbb{M}_k\setminus \mathbb{M}_{k-1}$ of measures of periodic type~$k$
and the sequences stabilizing at the moment~$k$. Here the following identity
holds for $\eta \in \mathcal{M}_k$:
$$
\theta_n[D_k[\eta]] = \eta, \qquad n \geq k.
$$
\end{remark}

\begin{corollary}
\label{c2} 
The measures of periodic type are dense in\/~$\mathbb{M}$.
\end{corollary}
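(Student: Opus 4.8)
The plan is to exploit Lemma~\ref{l2}, which identifies $\mathbb{M}$, equipped with the weak topology, homeomorphically with the projective limit $\varprojlim(\mathcal{M}_k,\operatorname{proj}_k)$ via the map $\Theta$. The projective-limit topology is precisely that of coordinatewise weak convergence, so it suffices to produce, for each $\nu\in\mathbb{M}$, a sequence of measures of periodic type whose $\Theta$-images converge coordinate by coordinate to $\Theta[\nu]=(\theta_k[\nu])_{k\ge 0}$; the homeomorphism of Lemma~\ref{l2} then upgrades this to weak convergence in $\mathbb{M}$.

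First I would fix $\nu\in\mathbb{M}$ and, for each $N\ge 0$, set $\eta_N:=\theta_N[\nu]$. The first relation in~\eqref{eq17} gives $\mathbb{S}^{2^N}\eta_N=\eta_N$, so $\eta_N\in\mathcal{M}_N$ and the measure $\nu^{(N)}:=D_N[\eta_N]$ is well defined; by Remark~\ref{r7} it lies in $\mathbb{M}$, and by construction (Definition~\ref{d11}) it is a measure of periodic type, of type at most $N$.

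Next I would compute $\Theta[\nu^{(N)}]$ and compare it with $\Theta[\nu]$. By Remark~\ref{r11} one has $\theta_n[\nu^{(N)}]=\eta_N$ for all $n\ge N$, in particular $\theta_N[\nu^{(N)}]=\eta_N=\theta_N[\nu]$. For the lower coordinates $n<N$ I would descend by the projection identity $\theta_n=\operatorname{proj}_n\theta_{n+1}=\tfrac12(\theta_{n+1}+\mathbb{S}^{2^n}\theta_{n+1})$ from~\eqref{eq17}, which holds for every element of $\mathbb{M}$ and hence for both $\nu^{(N)}$ and $\nu$: starting from equal $N$-th coordinates and applying the same projection at each step, one obtains $\theta_n[\nu^{(N)}]=\theta_n[\nu]$ for all $0\le n\le N$. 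Thus $\Theta[\nu^{(N)}]$ and $\Theta[\nu]$ agree on their first $N+1$ coordinates. Consequently, for each fixed $k$ the coordinate $\theta_k[\nu^{(N)}]$ equals $\theta_k[\nu]$ as soon as $N\ge k$, so it is eventually constant and converges to $\theta_k[\nu]$. This is exactly coordinatewise convergence $\Theta[\nu^{(N)}]\to\Theta[\nu]$ in the projective limit, whence $\nu^{(N)}\to\nu$ weakly by Lemma~\ref{l2}. Since each $\nu^{(N)}$ is of periodic type, the density follows.

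The argument is essentially formal once Lemma~\ref{l2} is in hand, so there is no genuine obstacle; the only point demanding care is the two facts packaged into the convergence step, namely that the topology on $\varprojlim(\mathcal{M}_k,\operatorname{proj}_k)$ coincides with coordinatewise weak convergence, and that the downward projections applied to $\nu^{(N)}$ and to $\nu$ truly agree. Both reduce to the consistency relations~\eqref{eq17} together with Remarks~\ref{r7} and~\ref{r11}.
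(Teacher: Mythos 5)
Your proposal is correct and follows exactly the paper's argument: the paper's proof consists of the single observation that every $\nu\in\mathbb{M}$ is the weak limit of the measures $D_k[\theta_k[\nu]]$, which is precisely the sequence $\nu^{(N)}$ you construct. Your verification of the coordinatewise agreement via Remark~\ref{r11} and the projection relations~\eqref{eq17}, upgraded to weak convergence by Lemma~\ref{l2}, just makes explicit what the paper leaves as ``readily seen''.
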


\begin{proof}
It can readily be seen that every measure $\nu \in \mathbb{M}$ is approximated
in the weak topology by the sequence of measures $D_k[\theta_k[\nu]]$, $k \to
\infty$. 
\end{proof}

As noted in Remark~\ref{r6}, the set~$\mathbb{M}_k$ is a~Poulsen simplex for
every $k \geq 0$, that~is, its Choquet boundary, the
set~$\operatorname{Ex}(\mathbb{M}_k)$, is dense in~$\mathbb{M}_k$.
Corollary~\ref{c2} means that the set $\bigcup_{k \geq 0} \mathbb{M}_k$
of measures of periodic type is dense in the simplex~$\mathbb{M}$ of all invariant
measures on the space $I^{\mathbb{Z}}\times I^{\mathbb{N}}$. This implies that
the set $\bigcup_{k \geq 0} \operatorname{Ex}(\mathbb{M}_k)$ is dense
in~$\mathbb{M}$. However, some of the measures in~$\bigcup_{k \geq 0}
\operatorname{Ex}(\mathbb{M}_k)$ are not extreme points of~$\mathbb{M}$ 
(see~Lemma~\ref{l5} below). Thus, a~natural question arises: \textit{is~$\mathbb{M}$
a~Poulsen simplex?} At the~moment, the authors do not know the answer to this
question. It seems that the question of whether or not the Poulsen property is
preserved under certain operations over simplices of invariant measures has not
been studied.

\subsection({Ergodic invariant measures on the
space~\$I\000\136\{\000\134mathbb\{Z\}\}\000\134times
I\000\136\{\000\134mathbb\{N\}\}\$}){Ergodic invariant measures on the space
$I^{\mathbb{Z}}\times I^{\mathbb{N}}$}
\label{ss5.3}
Let us proceed with the
study of the set $\operatorname{Ex}(\mathbb{M})$, which is the set of measures
in~$\mathbb{M}$ that are ergodic with~respect to~$\mathbb{S}\times\mathbb{O}$.

The following lemma gives a~characterization of ergodic measures
in~$\mathbb{M}$ in terms of their partial projections~$\theta_k[\cdot]$, $k
\geq 0$.

\begin{lemma}
\label{l3} 
Let\/ $\nu \in \mathbb{M}$. Then the following conditions are
equivalent:

\vskip1pt

{\rm 1)}~$\nu \in \operatorname{Ex}(\mathbb{M})$;

\vskip1pt

{\rm 2)}~$\theta_k[\nu] \in \operatorname{Ex}(\mathcal{M}_k)$ for every\/ $k \geq 0$;

\vskip1pt

{\rm 3)}~$\theta_k[\nu] \in \operatorname{Ex}(\mathcal{M}_k)$ for some\/ $n\geq 0$ 
and every\/ $k \geq n$.
\end{lemma}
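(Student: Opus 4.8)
The plan is to prove the cycle $(1)\Rightarrow(2)\Rightarrow(3)\Rightarrow(1)$, working throughout on $I^{\mathbb{Z}}\times I^{\mathbb{N}}$ with $T=\mathbb{S}\times\mathbb{O}$ and exploiting the cylinders $A_{r,k}$ as a cyclically permuted tower. On the full-measure set $I^{\mathbb{N}}_\infty$ the odometer $\mathbb{O}$ carries $A_{r,k}$ onto $A_{r+1,k}$ for $0\le r<2^k-1$ and $A_{2^k-1,k}$ onto $A_{0,k}$, so $T$ cyclically permutes the sets $B_{r,k}:=I^{\mathbb{Z}}\times A_{r,k}$, each of $\nu$-measure $2^{-k}$ (the projection of $\nu$ to $I^{\mathbb{N}}$ being the Lebesgue measure $m$). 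I will also use repeatedly the identity $\mathcal{P}_{r,k}[\nu]=\mathbb{S}^r\theta_k[\nu]$, which follows from \eqref{eq15}, and the relation $\nu(C\times A_{r,k})=2^{-k}\mathcal{P}_{r,k}[\nu](C)$ coming from the definition of the normalized projection.

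For $(1)\Rightarrow(2)$ I would argue by contraposition. If some $\theta_k[\nu]$ fails to be $\mathbb{S}^{2^k}$-ergodic, choose an $\mathbb{S}^{2^k}$-invariant set $B\subseteq I^{\mathbb{Z}}$ with $0<\theta_k[\nu](B)<1$ and set $\tilde{E}=\bigcup_{r=0}^{2^k-1}\mathbb{S}^r B\times A_{r,k}$. Using $\mathbb{S}^{2^k}B=B$ and the tower permutation one checks $T\tilde{E}=\tilde{E}$ mod $0$, while the two displayed identities give $\nu(\tilde{E})=\sum_{r}2^{-k}\theta_k[\nu](B)=\theta_k[\nu](B)\in(0,1)$, contradicting ergodicity of $\nu$. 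The implication $(2)\Rightarrow(3)$ is immediate, with $n=0$.

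The hard part will be $(3)\Rightarrow(1)$, and the essential subtlety is that ergodicity of a \emph{single} marginal does not suffice: since $\mathbb{O}$ carries all dyadic roots of unity in its point spectrum, $\mathbb{S}\times\mathbb{O}$ can fail to be ergodic even when $\mathcal{P}[\nu]=\theta_0[\nu]$ is $\mathbb{S}$-ergodic, and only the cooperation of all large levels $k$ cancels this spectral obstruction. To combine all levels at once I would use martingale convergence. Let $E$ be $T$-invariant, $F:=\mathbf{1}_E$, and let $\mathcal{G}_k$ be the $\sigma$-algebra generated by the $I^{\mathbb{Z}}$-coordinate together with $\alpha_1,\dots,\alpha_k$. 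Then $\mathcal{G}_k$ increases to the full Borel $\sigma$-algebra $\mathcal{B}$, so $F_k:=E_\nu[F\mid\mathcal{G}_k]\to F$ in $L^2(\nu)$ and $\nu$-a.e. Writing $F_k=\sum_r f_{r,k}(w)\,\mathbf{1}_{A_{r,k}}$ with $f_{r,k}\in L^2(\mathcal{P}_{r,k}[\nu])$, the facts that $T$ preserves $\mathcal{G}_k$ and permutes the cells cyclically turn $F\circ T=F$ into $f_{r+1,k}(\mathbb{S}w)=f_{r,k}(w)$ and $f_{0,k}(\mathbb{S}w)=f_{2^k-1,k}(w)$; composing around the cycle yields $f_{0,k}\circ\mathbb{S}^{2^k}=f_{0,k}$, so $f_{0,k}$ is $\mathbb{S}^{2^k}$-invariant in $L^2(\theta_k[\nu])$.

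Now condition (3) supplies $\theta_k[\nu]\in\operatorname{Ex}(\mathcal{M}_k)$ for all $k\ge n$, so each such $f_{0,k}$ is $\theta_k[\nu]$-a.e.\ constant; the relations $f_{r,k}=f_{0,k}\circ\mathbb{S}^{-r}$ force every $f_{r,k}$ to equal the \emph{same} constant $c_k$, whence $F_k\equiv c_k$ and, integrating, $c_k=\nu(E)$. Letting $k\to\infty$ and using $F_k\to F$ gives $F=\nu(E)$ $\nu$-a.e., so $\mathbf{1}_E$ is constant and $\nu(E)\in\{0,1\}$; thus $\nu$ is ergodic. I expect the only technical care to be in justifying the cell maps mod $0$ on $I^{\mathbb{N}}_\infty$ (where $\mathbb{O}$ is defined) and in keeping track of which measure each a.e.\ relation refers to; the martingale convergence itself and the passage from level-wise to global constancy are then routine.
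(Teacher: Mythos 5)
Your proof is correct. The implication $1)\Rightarrow 2)$ is exactly the paper's argument (the paper states it directly rather than by contraposition, but the invariant set $\bigcup_r \mathbb{S}^r B\times A_{r,k}$ and the computation $\nu(\widetilde E)=\theta_k[\nu](B)$ are the same), and $2)\Rightarrow 3)$ is trivial in both. Where you genuinely diverge is in $3)\Rightarrow 1)$. The paper argues at the level of the simplex: if $\nu$ were not extreme, write $\nu_1+\nu_2=2\nu$ with distinct $\nu_1,\nu_2\in\mathbb{M}$; extremality of $\theta_k[\nu]$ in $\mathcal{M}_k$ forces $\theta_k[\nu_1]=\theta_k[\nu_2]$ for $k\geq n$, relation~\eqref{eq17} propagates this down to all $k\geq 0$, and then Corollary~\ref{c1} (injectivity of $\Theta$) gives $\nu_1=\nu_2$, a contradiction. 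You instead take an arbitrary $T$-invariant set $E$ and show $\mathbf{1}_E$ is a.e.\ constant via the martingale $F_k=E_\nu[\mathbf{1}_E\mid\mathcal{G}_k]$: the identity $T^{-1}\mathcal{G}_k=\mathcal{G}_k$ (mod $0$ on $I^{\mathbb{Z}}\times I^{\mathbb{N}}_\infty$) makes each $F_k$ $T$-invariant, the cyclic tower relations reduce this to $\mathbb{S}^{2^k}$-invariance of $f_{0,k}$ with respect to $\theta_k[\nu]$, and ergodicity of $\theta_k[\nu]$ for $k\geq n$ plus martingale convergence finishes. Your route is self-contained and does not need Corollary~\ref{c1} or the downward propagation via~\eqref{eq17}; its price is the (routine but real) bookkeeping of which measure each a.e.\ identity refers to and the mod-$0$ identification of $T^{-1}\mathcal{G}_k$ with $\mathcal{G}_k$, which you correctly flag. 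The paper's route is shorter because the machinery of Remark~\ref{r10} and Corollary~\ref{c1} is already in place. Both proofs implicitly use that the projection of any $\nu\in\mathbb{M}$ to $I^{\mathbb{N}}$ is Lebesgue measure (unique ergodicity of the odometer), so that $I^{\mathbb{N}}_\infty$ has full measure and $\nu(I^{\mathbb{Z}}\times A_{r,k})=2^{-k}$; this is consistent with the paper's conventions.
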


\begin{proof}
To show that~1) implies~2), let $k \geq 0$ and recall that the symbol
$A_{r,k}$ denotes the coordinate cylinders in~$I^{\mathbb{N}}$ (see
Definition~\ref{d8}). Let $B \subset I^{\mathbb{Z}}$ be an
$\mathbb{S}^{2^k}$-invariant set. Then the set
$$
\bigcup_{r=0}^{2^k-1} (\mathbb{S}^r B) \times A_{r,k}
$$
is $\mathbb{S}\times\mathbb{O}$-invariant. Hence, by the ergodicity of the
measure~$\nu$, the value of~$\nu$ on this set is zero or one. This implies that
$\theta_k[\nu](B)$ is also zero or one.

\vskip1pt

2) obviously implies~3). Now suppose that condition~3) holds. 
To prove~1), suppose the contrary, that $\nu$ is not ergodic. Then there are
distinct $\nu_1, \nu_2 \in \mathbb{M}$ for which $\nu_1+\nu_2=2\nu$. Then for
every $k\geq n$ we have $\theta_k[\nu_1]+\theta_k[\nu_2]=2\theta_k[\nu]$. Since
the measure~$\theta_k[\nu]$ is ergodic, we have the equation
$\theta_k[\nu_1]=\theta_k[\nu_2]$ for $k \geq n$. By relation~\eqref{eq17},
this also holds for all $k \geq\nobreak 0$. Then the measures~$\nu_1$ and~$\nu_2$
coincide, which contradicts the procedure of their construction. 
\end{proof}

\begin{corollary}
\label{c3} 
Let\/ $\nu \in \operatorname{Ex}(\mathbb{M})$. Then for every\/ $k\geq 0$ 
and distinct\/~$r_1$ and\/~$r_2$ in the set\/ $\{0,\dots, 2^k-1\}$, the
measures\/ $\mathcal{P}_{r_1,k}[\nu]$ and\/~$\mathcal{P}_{r_2,k}[\nu]$ either
coincide or are mutually singular.
\end{corollary}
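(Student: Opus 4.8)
The plan is to reduce the desired dichotomy to the classical fact that two ergodic invariant measures for a single transformation are either equal or mutually singular. The essential inputs are Lemma~\ref{l3} and the relations collected in Remark~\ref{r10}.

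First I would fix $k \geq 0$ and recall from Remark~\ref{r10} the relations $\mathbb{S}\mathcal{P}_{r,k}[\nu] = \mathcal{P}_{r+1,k}[\nu]$ (with the cyclic convention $\mathbb{S}\mathcal{P}_{2^k-1,k}[\nu] = \mathcal{P}_{0,k}[\nu]$) together with the $\mathbb{S}^{2^k}$-invariance $\mathbb{S}^{2^k}\mathcal{P}_{r,k}[\nu] = \mathcal{P}_{r,k}[\nu]$. Iterating the first relation gives $\mathcal{P}_{r,k}[\nu] = \mathbb{S}^r \theta_k[\nu]$, where $\theta_k[\nu] = \mathcal{P}_{0,k}[\nu]$. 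Since $\nu \in \operatorname{Ex}(\mathbb{M})$, Lemma~\ref{l3} (the implication from condition~1 to condition~2) yields $\theta_k[\nu] \in \operatorname{Ex}(\mathcal{M}_k)$; that is, $\theta_k[\nu]$ is ergodic with respect to $\mathbb{S}^{2^k}$.

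Next I would verify that each $\mathcal{P}_{r,k}[\nu] = \mathbb{S}^r \theta_k[\nu]$ remains $\mathbb{S}^{2^k}$-ergodic. The point is that $\mathbb{S}^r$ commutes with $\mathbb{S}^{2^k}$, so it carries $\mathbb{S}^{2^k}$-invariant sets to $\mathbb{S}^{2^k}$-invariant sets. Concretely, if $B$ is $\mathbb{S}^{2^k}$-invariant, then $(\mathbb{S}^r\theta_k[\nu])(B) = \theta_k[\nu](\mathbb{S}^{-r}B)$, and $\mathbb{S}^{-r}B$ is again $\mathbb{S}^{2^k}$-invariant, so ergodicity of $\theta_k[\nu]$ forces this value into $\{0,1\}$. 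Hence all $2^k$ measures $\mathcal{P}_{r,k}[\nu]$, $0 \leq r < 2^k$, are ergodic for the single transformation $\mathbb{S}^{2^k}$.

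Finally I would invoke the ergodic dichotomy for $\mathbb{S}^{2^k}$: given two $\mathbb{S}^{2^k}$-ergodic probability measures $\mathcal{P}_{r_1,k}[\nu]$ and $\mathcal{P}_{r_2,k}[\nu]$, either they coincide or they are mutually singular. If they differ, there is a bounded measurable $f$ on $I^{\mathbb{Z}}$ whose integrals against the two measures are distinct; by the Birkhoff ergodic theorem applied to the homeomorphism $\mathbb{S}^{2^k}$, the Birkhoff averages of $f$ converge almost everywhere to these two distinct constants, so the two convergence sets are disjoint and each has full measure for the corresponding measure, exhibiting mutual singularity. The only obstacle I foresee is bookkeeping: one must keep the cyclic indexing modulo $2^k$ consistent and check that $\mathbb{S}^r$ genuinely conjugates $\mathbb{S}^{2^k}$ to itself. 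No deeper difficulty arises, since the substantive work—that the partial projection $\theta_k[\nu]$ is ergodic—is already supplied by Lemma~\ref{l3}.
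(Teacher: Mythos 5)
Your proposal is correct and follows essentially the same route as the paper: the paper's proof simply observes that each $\mathcal{P}_{r,k}[\nu]$ is ergodic for $\mathbb{S}^{2^k}$ and then invokes the standard dichotomy that two ergodic invariant measures of a single transformation either coincide or are mutually singular. You merely fill in the details the paper leaves implicit (the identification $\mathcal{P}_{r,k}[\nu]=\mathbb{S}^r\theta_k[\nu]$ via Remark~\ref{r10}, the transfer of ergodicity from $\theta_k[\nu]$ to its shifts, and the Birkhoff argument for the dichotomy), all of which are accurate.
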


\begin{proof}
Each of the measures $\mathcal{P}_{r,k}[\nu]$, $0\leq r \leq 2^k-1$, is ergodic
for the transformation~$\mathbb{S}^{2^k}$. Any two ergodic measures either
coincide or are mutually singular. 
\end{proof}

\subsubsection{Ergodic measures of periodic type}
\label{sss5.3.1}
We need the following simple lemma.

\begin{lemma}
\label{l4} 
Let\/~$T$ be an automorphism of a~Lebesgue space\/ $(X,\mu)$. If the
transformation\/~$T^2$ is ergodic on\/~$(X,\mu)$, then for every\/ $k\geq 1$ 
the transformation\/~$T^{2^k}$ is also ergodic on\/~$(X,\mu)$.
\end{lemma}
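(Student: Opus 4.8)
The plan is to reduce everything to the behaviour of $T$ on the would-be invariant functions and to exploit that the relevant roots of unity form a $2$-group. First I would record the trivial but essential reductions: since every $T$-invariant function is $T^2$-invariant, ergodicity of $T^2$ forces $T$ to be ergodic; and I will argue by induction on $k$, the case $k=1$ being the hypothesis. So the real task is the doubling step: assuming $T^2,\dots,T^{2^{k-1}}$ are ergodic, show that $T^{2^k}$ is ergodic. The one point to keep in mind throughout is that the naive implication ``$S$ ergodic $\Rightarrow S^2$ ergodic'' is false, so the argument must genuinely use the base hypothesis that $T^2$ (not merely $T^{2^{k-1}}$) is ergodic; this is where the power-of-two structure enters.

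For the doubling step I would suppose, towards a contradiction, that there is a non-constant $f\in L^2(X,\mu)$ with $f\circ T^{2^k}=f$. Writing $h=f\circ T^{2^{k-1}}$, the function $f+h$ is $T^{2^{k-1}}$-invariant, hence constant by the inductive hypothesis, while $\varphi:=f-h$ satisfies $\varphi\circ T^{2^{k-1}}=-\varphi$. Since $f$ is non-constant and $f+h$ is constant, $\varphi$ is a nonzero function; and because $|\varphi|$ is $T^{2^{k-1}}$-invariant it is constant, so after normalising I may assume $|\varphi|\equiv 1$. Thus the obstruction has been packaged as a unimodular function $\varphi$ with $\varphi\circ T^{2^{k-1}}=-\varphi$.

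The heart of the matter is a descent that halves the exponent: given a unimodular $\varphi$ with $\varphi\circ T^{2m}=-\varphi$, I set $\psi:=\varphi\cdot(\varphi\circ T^m)$ and compute $\psi\circ T^m=(\varphi\circ T^m)\cdot(\varphi\circ T^{2m})=-\varphi\cdot(\varphi\circ T^m)=-\psi$, where $\psi$ is again unimodular and hence nonzero. Applying this step $k-1$ times starting from $\varphi\circ T^{2^{k-1}}=-\varphi$ produces a unimodular function $\Psi$ with $\Psi\circ T=-\Psi$. Then $\Psi$ is $T^2$-invariant and non-constant (since $\Psi\circ T=-\Psi\neq\Psi$), contradicting the ergodicity of $T^2$. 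This completes the doubling step and hence the induction.

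I expect the descent identity $\psi=\varphi\cdot(\varphi\circ T^m)$ to be the crux: it is exactly the device that converts an ``eigenvalue $-1$ for $T^{2m}$'' relation into an ``eigenvalue $-1$ for $T^m$'' relation, and keeping the function unimodular is what guarantees that it does not collapse to zero along the way. Conceptually this is just the statement that the eigenvalues of an ergodic transformation form a subgroup of the circle, so a nontrivial $2^k$-th root of unity among them would have a power equal to $-1$, contradicting $T^2$-ergodicity; the elementary descent above is a self-contained realisation of that idea avoiding any appeal to spectral theory.
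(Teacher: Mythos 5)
Your proof is correct. It differs from the paper's argument in its mechanics, though not in its underlying idea. The paper's proof is spectral and very short: ergodicity of $T^2$ implies that $T$ is ergodic and that $-1$ is not an eigenvalue of the unitary operator $U_T$; since the eigenvalues of an ergodic automorphism form a subgroup of the circle, no nontrivial $2^k$-th root of unity can be an eigenvalue (its cyclic group would contain $-1$), and hence $1$ is a simple eigenvalue of $U_{T^{2^k}}$, i.e.\ $T^{2^k}$ is ergodic. This is efficient but leans on two standard facts (the group structure of the point spectrum, and the equivalence between ergodicity of $T^n$ and the absence of nontrivial $n$-th roots of unity in the spectrum) that the paper does not spell out. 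Your descent replaces both by explicit function manipulations: the decomposition $f=\frac12\bigl((f+h)+(f-h)\bigr)$ isolates the obstruction as a unimodular $\varphi$ with $\varphi\circ T^{2^{k-1}}=-\varphi$, and the identity $\psi=\varphi\cdot(\varphi\circ T^m)$ is exactly the ``product of eigenfunctions'' device made concrete, halving the exponent at each step until it produces a nonconstant $T^2$-invariant function. What your version buys is self-containedness (no appeal to the spectral theory of ergodic transformations, and unimodularity guarantees nothing degenerates along the way); what it costs is length and the need to carry an induction on $k$ with all intermediate ergodicity statements as hypotheses. Both arguments are sound, and you correctly identify in your closing paragraph that they are two realisations of the same group-theoretic fact about $2$-power roots of unity.
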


\begin{proof}
The ergodicity of the transformation~$T^2$ implies that of the
automorphism~$T$ itself, and also the fact that~$-1$ is not an eigenvalue
of the unitary operator~$U_T$ on $L^2(X,\mu)$ corresponding to the
automorphism~$T$. Since the eigenvalues of~$U_T$~form a~group,
this implies that the only one which is a~root
of degree~$2^k$ of~$1$ is the number~$1$. Therefore, $1$ is a~multiplicity-free
eigenvalue of the operator~$T^{2^k}$. 
\end{proof}

The following lemma gives a~description of the set of measures of periodic type
in~$\operatorname{Ex}(\mathbb{M})$.

\begin{lemma}
\label{l5} 
Let\/ $k \geq 0$ and\/ $\eta \in \mathcal{M}_k$. Then\/ $D_k[\eta] \in
\operatorname{Ex}(\mathbb{M})$ if and only if\/~$\eta \in
\operatorname{Ex}(\mathcal{M}_{k+1})$.
\end{lemma}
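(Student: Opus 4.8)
The plan is to prove the equivalence as a short chain of equivalences, feeding the structure of $D_k[\eta]$ through the ergodicity criterion of Lemma~\ref{l3}, the stabilization identity of Remark~\ref{r11}, and the spectral doubling Lemma~\ref{l4}. The guiding observation is that, since $D_k[\eta]$ is a measure of periodic type~$k$, its partial projections stabilize: by Remark~\ref{r11} we have $\theta_m[D_k[\eta]] = \eta$ for every $m \geq k$. Hence the ergodicity of $D_k[\eta]$ as an element of $\mathbb{M}$ should be governed entirely by the ergodicity of $\eta$ under the single transformation $\mathbb{S}^{2^{k+1}}$, which is exactly the condition $\eta \in \operatorname{Ex}(\mathcal{M}_{k+1})$.

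First I would invoke the equivalence of conditions~1) and~3) in Lemma~\ref{l3}, exploiting the freedom to choose the threshold $n$ in condition~3). Taking $n = k+1$, Lemma~\ref{l3} gives that $D_k[\eta] \in \operatorname{Ex}(\mathbb{M})$ if and only if $\theta_m[D_k[\eta]] \in \operatorname{Ex}(\mathcal{M}_m)$ for every $m \geq k+1$. Substituting the stabilization identity $\theta_m[D_k[\eta]] = \eta$ from Remark~\ref{r11}, valid for all $m \geq k$, this reduces to the statement that $D_k[\eta] \in \operatorname{Ex}(\mathbb{M})$ if and only if $\eta \in \operatorname{Ex}(\mathcal{M}_m)$ for every $m \geq k+1$; that is, if and only if $\eta$ is ergodic with respect to $\mathbb{S}^{2^m}$ for all $m \geq k+1$.

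It then remains to collapse this family of ergodicity conditions to the single one $\eta \in \operatorname{Ex}(\mathcal{M}_{k+1})$. One direction is immediate by taking $m = k+1$. For the converse I would apply Lemma~\ref{l4} with $T = \mathbb{S}^{2^k}$; this $T$ preserves $\eta$ because $\eta \in \mathcal{M}_k$. Ergodicity of $T^{2} = \mathbb{S}^{2^{k+1}}$ (which is precisely $\eta \in \operatorname{Ex}(\mathcal{M}_{k+1})$) then yields, via Lemma~\ref{l4}, ergodicity of $T^{2^{j}} = \mathbb{S}^{2^{k+j}}$ for every $j \geq 1$, i.e.\ $\eta \in \operatorname{Ex}(\mathcal{M}_m)$ for all $m \geq k+1$. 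Chaining the three equivalences produces the lemma.

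As for the difficulty: essentially no hard analytic step remains, since Lemmas~\ref{l3} and~\ref{l4} together with Remark~\ref{r11} carry all the weight. The only point demanding care is the bookkeeping, namely aligning the threshold $n = k+1$ in Lemma~\ref{l3} with the choice $T = \mathbb{S}^{2^k}$ in Lemma~\ref{l4}, so that the squaring $T \mapsto T^2$ in the doubling lemma corresponds exactly to the passage from $\mathcal{M}_k$ to $\mathcal{M}_{k+1}$. Choosing $n = k+1$ rather than $n = k$ is what makes the final equivalence with $\operatorname{Ex}(\mathcal{M}_{k+1})$ direct, and lets me avoid separately appealing to the (standard) fact that ergodicity of $T^{2}$ forces ergodicity of $T$ itself.
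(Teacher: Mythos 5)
Your argument is correct and follows essentially the same route as the paper's proof: both reduce the claim to the stabilization identity $\theta_m[D_k[\eta]]=\eta$ for $m\geq k$ from Remark~\ref{r11}, apply the ergodicity criterion of Lemma~\ref{l3}, and use Lemma~\ref{l4} with $T=\mathbb{S}^{2^k}$ to propagate ergodicity from $\mathbb{S}^{2^{k+1}}$ to all $\mathbb{S}^{2^m}$, $m\geq k+1$. The only cosmetic difference is that you phrase everything as a single chain of equivalences via condition~3) of Lemma~\ref{l3}, whereas the paper handles the two implications separately (using 1)$\Rightarrow$2) for the forward direction).
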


\begin{proof}
It follows from Remark~\ref{r11} that the equation 
$\theta_{n}[D_k[\eta]] \,{=}\, \eta$ holds for $n\geq\nobreak k$.

If the measure $D_k[\eta]$ is ergodic for $\mathbb{S}\times\mathbb{O}$, then,
by Lemma~\ref{l3}, the measure $\eta = \theta_{k+1}[D_k[\eta]]$ is ergodic for
$\mathbb{S}^{2^{k+1}}$.

If the measure~$\eta$ is ergodic for $\mathbb{S}^{2^{k+1}}$\!, then,
by Lemma~\ref{l4}, it is ergodic for $\mathbb{S}^{2^{n}}$\!, $n\,{>}\,k$. Hence, the
measure $D_k[\eta]$ satisfies condition~3) of Lemma~\ref{l3}. This implies
that $D_k[\eta]$ is ergodic for $\mathbb{S}\times\mathbb{O}$ and completes the
proof of the lemma. 
\end{proof}

\begin{corollary}
\label{c4} 
Let\/ $k\geq 0$. Then (here and below we take\/ 
$\mathbb{M}_{-1}=\varnothing$)
\begin{equation}
\label{eq18}
\operatorname{Ex}(\mathbb{M})\cap \mathbb{M}_k\setminus
\mathbb{M}_{k-1} = \operatorname{Ex}(\mathbb{M}_k)\cap
\operatorname{Ex}(\mathbb{M}_{k+1})\setminus
\operatorname{Ex}(\mathbb{M}_{k-1}).
\end{equation}
\end{corollary}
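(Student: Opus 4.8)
The plan is to establish the set equality in \eqref{eq18} by combining the characterization of ergodic measures from Lemma~\ref{l3} with the structure of the maps $D_k$ recorded in Remark~\ref{r9}. First I would fix $k \geq 0$ and take a measure $\nu \in \operatorname{Ex}(\mathbb{M})\cap(\mathbb{M}_k\setminus \mathbb{M}_{k-1})$. Since $\nu$ has periodic type exactly $k$, Remark~\ref{r11} gives a unique $\eta \in \mathcal{M}_k$ with $\nu = D_k[\eta]$ and $\theta_n[\nu]=\eta$ for all $n \geq k$; the fact that the type is not smaller than $k$ is precisely the statement that the stabilization moment is $k$, so $\nu \in \mathbb{M}_k\setminus\mathbb{M}_{k-1}$ is equivalent to $\eta \in \mathcal{M}_k$ together with $\eta \notin \operatorname{proj}_{k-1}(\mathcal{M}_k)$-type nondegeneracy. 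Applying Lemma~\ref{l5}, ergodicity of $\nu=D_k[\eta]$ is equivalent to $\eta \in \operatorname{Ex}(\mathcal{M}_{k+1})$.

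Next I would translate both sides into the language of the affine homeomorphisms $D_j$. By Remark~\ref{r9}, for each $j$ the map $D_j$ is an affine homeomorphism of $\mathcal{M}_j$ onto $\mathbb{M}_j$, hence restricts to a bijection $\operatorname{Ex}(\mathcal{M}_j) \to \operatorname{Ex}(\mathbb{M}_j)$. Because $\eta \in \mathcal{M}_k \subset \mathcal{M}_{k+1}$ (an $\mathbb{S}^{2^k}$-invariant measure is automatically $\mathbb{S}^{2^{k+1}}$-invariant), and since $D_{k+1}[\eta]=D_k[\eta]=\nu$ by Remark~\ref{r8}, the conditions $\nu \in \operatorname{Ex}(\mathbb{M}_k)$ and $\nu \in \operatorname{Ex}(\mathbb{M}_{k+1})$ correspond under $D_k^{-1}$ and $D_{k+1}^{-1}$ precisely to $\eta \in \operatorname{Ex}(\mathcal{M}_k)$ and $\eta \in \operatorname{Ex}(\mathcal{M}_{k+1})$ respectively. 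The subtraction of $\operatorname{Ex}(\mathbb{M}_{k-1})$ on the right should exactly remove those $\nu$ whose periodic type is strictly less than $k$, matching the removal of $\mathbb{M}_{k-1}$ on the left.

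The plan is therefore to prove two inclusions. For the forward inclusion, starting from $\nu \in \operatorname{Ex}(\mathbb{M})$ of type exactly $k$, I get $\eta \in \operatorname{Ex}(\mathcal{M}_{k+1})$ from Lemma~\ref{l5}, whence $\nu = D_{k+1}[\eta] \in \operatorname{Ex}(\mathbb{M}_{k+1})$; and since every extreme point of $\mathbb{M}_{k+1}$ lying in the subface $\mathbb{M}_k$ is automatically extreme in $\mathbb{M}_k$, I also obtain $\nu \in \operatorname{Ex}(\mathbb{M}_k)$; finally type exactly $k$ forces $\nu \notin \mathbb{M}_{k-1} \supset \operatorname{Ex}(\mathbb{M}_{k-1})$. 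For the reverse inclusion, given $\nu$ in the right-hand side, membership in $\operatorname{Ex}(\mathbb{M}_{k+1})$ pulls back to $\eta \in \operatorname{Ex}(\mathcal{M}_{k+1})$, so Lemma~\ref{l5} yields $\nu \in \operatorname{Ex}(\mathbb{M})$, while $\nu \notin \operatorname{Ex}(\mathbb{M}_{k-1})$ combined with $\nu \in \mathbb{M}_k$ gives that the type is exactly $k$, i.e.\ $\nu \in \mathbb{M}_k \setminus \mathbb{M}_{k-1}$.

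The main obstacle I anticipate is the bookkeeping around the removed set $\operatorname{Ex}(\mathbb{M}_{k-1})$: I must verify that for a measure of type exactly $k$ the failure to lie in $\mathbb{M}_{k-1}$ really corresponds to the failure to lie in $\operatorname{Ex}(\mathbb{M}_{k-1})$, and that no extreme point of $\mathbb{M}_{k+1}$ of type strictly less than $k$ can sneak in. This requires using that $\mathbb{M}_{k-1} \subset \mathbb{M}_k \subset \mathbb{M}_{k+1}$ form nested faces of the simplices, so that extremality is inherited downward along faces (an extreme point of a larger simplex that happens to lie in a face is extreme in that face), and that the stabilization moment of $\Theta[\nu]$ in Remark~\ref{r11} is an exact invariant distinguishing the types. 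Once this face-inheritance principle is stated cleanly, the two inclusions follow by the dictionary above with no further computation.
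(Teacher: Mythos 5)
Your proposal is correct and follows essentially the same route as the paper's own proof: both directions are handled by pulling $\nu=D_k[\eta]$ back through the affine homeomorphisms of Remark~\ref{r9} (using $D_{k+1}[\eta]=D_k[\eta]$ from Remark~\ref{r8}), invoking Lemma~\ref{l5} to convert ergodicity of $\nu$ into $\eta\in\operatorname{Ex}(\mathcal{M}_{k+1})$, and using the downward inheritance of extremality to convex subsets to handle $\operatorname{Ex}(\mathbb{M}_k)$ and the removal of $\operatorname{Ex}(\mathbb{M}_{k-1})$. The only cosmetic difference is that you deduce $\nu\in\operatorname{Ex}(\mathbb{M}_k)$ from $\operatorname{Ex}(\mathbb{M}_{k+1})$ rather than directly from $\operatorname{Ex}(\mathbb{M})$ as the paper does, and the inheritance principle needs only convexity of the subset, not that it be a face.
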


\begin{proof}
We first prove that the left-hand side of equation~\eqref{eq18} is contained
on the right-hand side. If~$\nu \in \operatorname{Ex}(\mathbb{M})\cap
\mathbb{M}_k\setminus \mathbb{M}_{k-1}$, then, obviously, $\nu \in
\operatorname{Ex}(\mathbb{M}_{k})$. Further, there is a~measure $\eta \in
\mathcal{M}_k\setminus\mathcal{M}_{k-1}$ for which $\nu =D_k[\eta]$. By
Lemma~\ref{l5}, since $\nu \in \operatorname{Ex}(\mathbb{M})$, it follows that
$\eta \in \operatorname{Ex}(\mathcal{M}_{k+1})$. Hence, $\nu \in
\operatorname{Ex}(\mathbb{M}_{k+1})$. Here $\eta \notin \mathcal{M}_{k-1}$, and
therefore $\nu \notin \operatorname{Ex}(\mathbb{M}_{k-1})$.

To prove the reverse, let $\nu \in \operatorname{Ex}(\mathbb{M}_k)
\cap \operatorname{Ex}(\mathbb{M}_{k+1})\setminus
\operatorname{Ex}(\mathbb{M}_{k-1})$. Then $\nu = D_k[\eta]$ for some $\eta \in
\mathbb{M}_k\cap \operatorname{Ex}(\mathcal{M}_{k+1})$. It follows from
Lemma~\ref{l5} that $\nu \in \operatorname{Ex}(\mathbb{M})$. Moreover, if~$\nu
\in \operatorname{Ex}(\mathbb{M})$, then, obviously, $\nu \in
\operatorname{Ex}(\mathbb{M}_{k-1})$, that is, we arrive at a~contradiction.
Therefore $\nu \in \operatorname{Ex}(\mathbb{M})\cap \mathbb{M}_k\setminus
\mathbb{M}_{k-1}$. 
\end{proof}

Moreover, the following relation is obvious:
$$
\operatorname{Ex}(\mathbb{M})\cap
\mathbb{M}_\infty = \operatorname{Ex}(\mathbb{M}_\infty).
$$

\begin{corollary}
\label{c5}
The set of ergodic measures in\/~$\mathbb{M}$ admits the following grading:
$$
\operatorname{Ex}(\mathbb{M}) = \operatorname{Ex}(\mathbb{M}_\infty) \cup
\biggl(\bigcup_{k\geq 0} (\operatorname{Ex}(\mathbb{M}_k)\cap
\operatorname{Ex}(\mathbb{M}_{k+1})\setminus
\operatorname{Ex}(\mathbb{M}_{k-1}))\biggr).
$$
\end{corollary}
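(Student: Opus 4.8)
The plan is to obtain the grading by simply intersecting the already-established partition of the whole simplex $\mathbb{M}$ with the set $\operatorname{Ex}(\mathbb{M})$ and then rewriting each resulting piece by means of the identities proved above. The essential input is Remark~\ref{r9}, which (with the convention $\mathbb{M}_{-1} = \varnothing$) gives the disjoint decomposition
\begin{equation*}
\mathbb{M} = \mathbb{M}_\infty \cup \biggl(\bigcup_{k\geq 0} (\mathbb{M}_{k}\setminus \mathbb{M}_{k-1})\biggr),
\end{equation*}
the union being disjoint because, by Definition~\ref{d11}, a measure of periodic type~$k$ is not of periodic type~$k-1$, and an aperiodic measure is of no periodic type at all.

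First I would intersect both sides of this equation with $\operatorname{Ex}(\mathbb{M})$, which yields
\begin{equation*}
\operatorname{Ex}(\mathbb{M}) = \bigl(\operatorname{Ex}(\mathbb{M}) \cap \mathbb{M}_\infty\bigr) \cup \biggl(\bigcup_{k\geq 0} \bigl(\operatorname{Ex}(\mathbb{M}) \cap (\mathbb{M}_{k}\setminus \mathbb{M}_{k-1})\bigr)\biggr),
\end{equation*}
the union on the right remaining disjoint since intersection distributes over a disjoint union. I would then treat the aperiodic term and the periodic terms separately. For the aperiodic part the identity $\operatorname{Ex}(\mathbb{M}) \cap \mathbb{M}_\infty = \operatorname{Ex}(\mathbb{M}_\infty)$ recorded immediately before the statement handles the first summand directly. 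For each $k \geq 0$, Corollary~\ref{c4} supplies exactly
\begin{equation*}
\operatorname{Ex}(\mathbb{M}) \cap \mathbb{M}_{k}\setminus \mathbb{M}_{k-1} = \operatorname{Ex}(\mathbb{M}_k)\cap \operatorname{Ex}(\mathbb{M}_{k+1})\setminus \operatorname{Ex}(\mathbb{M}_{k-1}).
\end{equation*}
Substituting these two identifications term by term into the displayed decomposition of $\operatorname{Ex}(\mathbb{M})$ produces precisely the claimed formula.

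I do not expect any genuine obstacle here: the corollary is a bookkeeping consequence of Remark~\ref{r9} and Corollary~\ref{c4}, all the real content having already been packaged into Corollary~\ref{c4} (which in turn rests on Lemma~\ref{l5}). The only point worth checking carefully is that the decomposition in Remark~\ref{r9} is a genuine partition, so that intersecting with $\operatorname{Ex}(\mathbb{M})$ distributes cleanly over the union; this is immediate from Definition~\ref{d11}, since the sets $\mathbb{M}_\infty$ and $\mathbb{M}_k \setminus \mathbb{M}_{k-1}$, $k \geq 0$, are pairwise disjoint by the very definition of the periodic type of a measure.
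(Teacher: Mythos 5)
Your proposal is correct and follows essentially the same route as the paper: the corollary is stated there without a separate proof precisely because it is the termwise combination of the partition in Remark~\ref{r9}, the identity $\operatorname{Ex}(\mathbb{M})\cap\mathbb{M}_\infty=\operatorname{Ex}(\mathbb{M}_\infty)$ noted just before the statement, and Corollary~\ref{c4}. Your bookkeeping, including the check that the decomposition is disjoint so that intersection distributes, matches the intended argument exactly.
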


\subsubsection{Aperiodic ergodic measures}
\label{sss5.3.2}

\begin{theorem}
\label{t3} 
Let\/ $\nu \in \operatorname{Ex}(\mathbb{M}_\infty)$. Then for
every\/ $k \geq 0$ and distinct\/~$r_1$ and\/~$r_2$ in the set\/ $\{0,\dots,
2^k-1\}$, the measures\/ $\mathcal{P}_{r_1,k}[\nu]$ and\/~$\mathcal{P}_{r_2,k}[\nu]$
are mutually singular.

\end{theorem}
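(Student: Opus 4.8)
The plan is to argue by contradiction and to lean on the dichotomy already proved in Corollary~\ref{c3}: since $\nu\in\operatorname{Ex}(\mathbb{M}_\infty)\subset\operatorname{Ex}(\mathbb{M})$, any two of the measures $\mathcal{P}_{r,k}[\nu]$ at a fixed level $k$ either coincide or are mutually singular, so it suffices to \emph{rule out coincidences}. Suppose, to the contrary, that $\mathcal{P}_{r_1,k}[\nu]=\mathcal{P}_{r_2,k}[\nu]$ for some $k$ and some $r_1\neq r_2$ in $\{0,\dots,2^k-1\}$. By the shift relations~\eqref{eq15} the set of $d$ with $\mathcal{P}_{r,k}[\nu]=\mathcal{P}_{r+d,k}[\nu]$ for all $r$ is a nonzero subgroup of $\mathbb{Z}/2^k\mathbb{Z}$, hence equals $2^j\mathbb{Z}/2^k\mathbb{Z}$ for some $j<k$; equivalently $\mathcal{P}_{r,k}[\nu]$ depends only on $r\bmod 2^j$. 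My goal is to show that this $2^j$-periodicity of the index propagates to every higher level, since then the sequence $(\theta_n[\nu])$ stabilises and, by Remark~\ref{r11}, $\nu$ is of periodic type, contradicting $\nu\in\mathbb{M}_\infty$.

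The easy directions I would dispose of first. Iterating the averaging relation~\eqref{eq16} from level $j$ up to any $n\geq j$ writes $\mathcal{P}_{r,j}[\nu]$ as the mean of the measures $\mathcal{P}_{r+t2^j,n}[\nu]$, $0\le t<2^{n-j}$; once the index is known to be $2^j$-periodic at level $n$, all these summands coincide, giving $\theta_n[\nu]=\theta_j[\nu]$. Hence the whole statement reduces to a single inductive claim: \emph{if $\mathcal{P}_{\,\cdot\,,n}[\nu]$ is $2^j$-periodic in its index, then so is $\mathcal{P}_{\,\cdot\,,n+1}[\nu]$}.

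This induction step is the heart of the matter and the part I expect to be the main obstacle. Here I would use that $\theta_n[\nu]=\mathcal{P}_{0,n}[\nu]$ is $\mathbb{S}^{2^j}$-invariant (immediate from periodicity) together with relation~\eqref{eq17}, namely $\theta_n=\tfrac12(\theta_{n+1}+\mathbb{S}^{2^n}\theta_{n+1})$. Applying $\mathbb{S}^{2^j}$ and using $\mathbb{S}^{2^j}\theta_n=\theta_n$ yields, after rewriting each shift of $\theta_{n+1}$ as the appropriate $\mathcal{P}_{\,\cdot\,,n+1}[\nu]$,
$$
\mathcal{P}_{0,n+1}[\nu]+\mathcal{P}_{2^n,n+1}[\nu]
=\mathcal{P}_{2^j,n+1}[\nu]+\mathcal{P}_{2^j+2^n,n+1}[\nu].
$$
Now the dichotomy of Corollary~\ref{c3} at level $n+1$ enters decisively: the measure $\mathcal{P}_{0,n+1}[\nu]$ is dominated by the right-hand side, so it cannot be singular to both summands there and must coincide with one of them. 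In the first case it is $\mathbb{S}^{2^j}$-invariant directly; in the second case it is $\mathbb{S}^{2^j+2^n}$-invariant and hence $\mathbb{S}^{2^j}$-invariant, because $\gcd(2^j+2^n,2^{n+1})=2^j$ (as $1+2^{n-j}$ is odd). Applying the shifts $\mathbb{S}^r$ then shows $\mathcal{P}_{r,n+1}[\nu]$ depends only on $r\bmod 2^j$, closing the induction.

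Putting the pieces together gives $\theta_n[\nu]=\theta_j[\nu]$ for all $n\geq j$, so $(\theta_n[\nu])$ stabilises and $\nu$ is a measure of periodic type by Remark~\ref{r11}, contradicting $\nu\in\operatorname{Ex}(\mathbb{M}_\infty)$. The delicate point throughout is that the periodic index pattern only transfers upward through the coincide-or-singular alternative, so the argument depends essentially on the ergodicity of $\nu$ (via Corollary~\ref{c3}) and not on any explicit description of the measure.
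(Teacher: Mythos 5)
Your proof is correct, and while it follows the paper's overall skeleton --- assume two projections at some level are non-singular, upgrade this to coincidence via Corollary~\ref{c3}, show that the sequence $(\theta_n[\nu])_{n\ge 0}$ must then stabilize, and conclude via Remark~\ref{r11} that $\nu$ is of periodic type, contradicting aperiodicity --- the mechanism by which you force stabilization is genuinely different. The paper takes $k$ minimal among the levels with a non-singular pair, deduces from minimality and~\eqref{eq16} that $r_2-r_1=2^{k-1}$, so that $\theta_{k-1}[\nu]$ coincides with an ergodic measure for $\mathbb{S}^{2^k}$, and then invokes Lemma~\ref{l4} to make $\theta_{k-1}[\nu]$ ergodic for every $\mathbb{S}^{2^n}$, $n\ge k$; relation~\eqref{eq17} exhibits $\theta_{k-1}[\nu]$ as a convex combination of $\mathbb{S}^{2^n}$-invariant measures, and extremality collapses all of them to $\theta_{k-1}[\nu]$. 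You instead propagate the coincidence pattern upward one level at a time: applying $\mathbb{S}^{2^j}$ to~\eqref{eq17} gives $\mathcal{P}_{0,n+1}[\nu]+\mathcal{P}_{2^n,n+1}[\nu]=\mathcal{P}_{2^j,n+1}[\nu]+\mathcal{P}_{2^j+2^n,n+1}[\nu]$, the coincide-or-singular dichotomy forces $\mathcal{P}_{0,n+1}[\nu]$ to equal one of the two right-hand summands, and the computation $\gcd(2^j+2^n,2^{n+1})=2^j$ (valid since $n\ge k>j$) recovers $2^j$-periodicity of the index in either case; iterating~\eqref{eq16} then gives $\theta_n[\nu]=\theta_j[\nu]$ for all $n\ge j$. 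Your route is more combinatorial and dispenses with Lemma~\ref{l4} and the extremal-decomposition step entirely, at the cost of a longer level-by-level induction; the paper's route is shorter once Lemma~\ref{l4} is in hand and makes explicit the spectral fact (ergodicity of the projection for all higher powers of $\mathbb{S}$) that underlies the collapse. Both arguments are complete.
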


\begin{proof}
Assume that for some $k \geq 0$, \,$\mathcal{P}_{r_1,k}[\nu]$ and
$\mathcal{P}_{r_2,k}[\nu]$ are not mutually singular and let~$k$
be minimal among such numbers. We may assume without loss of generality
that $r_1<r_2$. By Corollary~\ref{c3}, the measures $\mathcal{P}_{r_1,k}[\nu]$
and~$\mathcal{P}_{r_2,k}[\nu]$ coincide. Since~$k$~is~minimal, all the measures
$\mathcal{P}_{r,k-1}[\nu]$, $0\leq r <\nobreak 2^{k-1}$, are pairwise singular. 
Using this argument and relation~\eqref{eq16}, we see that $r_2-r_1 = 2^{k-1}$.
Hence, $\mathcal{P}_{r_1,k-1}[\nu]$ coincides with 
$\mathcal{P}_{r_1,k}[\nu]$ and is ergodic for the
transformation~$\mathbb{S}^{2^k}$. Thus, the measure $\theta_{k-1}[\nu]$ is
also ergodic for~$\mathbb{S}^{2^k}$. By Lemma~\ref{l4}, 
$\theta_{k-1}[\nu]$ is ergodic for~$\mathbb{S}^{2^{n}}$ when $n \geq\nobreak k$. 
On the other hand, when $n>k$, by formula~\eqref{eq17}, the measure
~$\theta_{k-1}[\nu]$ is the arithmetic mean of the shifts of the measure
$\theta_{n}[\nu]$ which are invariant under~$\mathbb{S}^{2^{n}}$. Hence,
$\theta_{n}[\nu]=\theta_{k-1}[\nu]$ when $n\geq\nobreak k$. Hence,
$\nu=D_k[\theta_{k-1}[\nu]]$, that~is, $\nu$~is a~measure
of periodic type, which contradicts the assumption. 
\end{proof}

\begin{definition}
\label{d14}
For $n \geq 0$ we denote the number $e^{2^{1-n}\pi i }$ by~$\varrho_n$
(it is the $2^n$th root of unity with the least positive argument).
\end{definition}

\begin{corollary}[{\rm (properties of aperiodic ergodic measures)}]
\label{c6} 
Let\/ $\nu$ be an aperiodic measure in\/~$\operatorname{Ex}(\mathbb{M})$. Then

\vskip0.5pt

{\rm 1)}~the measures\/ $\theta_k[\nu]$ and\/~$\mathbb{S}^r\theta_k[\nu]$ are
mutually singular for\/ $k\geq 1$ and\/ $1<r<2^{k}-1$;

\vskip0.5pt

{\rm 2)}~(delta-type property) for\/ $\theta_0[\nu]$-almost all\/ $w \in
I^{\mathbb{Z}}$ the conditional measures in the sections\/ $\{w\}\times
I^{\mathbb{N}}$ are delta measures;

\vskip0.5pt

{\rm 3)}~the projection of\/~$I^{\mathbb{Z}}\times I^{\mathbb{N}}$
to~$I^{\mathbb{Z}}$ is an isomorphism of the dynamical systems\/
$(I^{\mathbb{Z}}\times I^{\mathbb{N}},\,\nu,\,\mathbb{S}\times\mathbb{O})$ and\/
$(I^{\mathbb{Z}},\,\theta_0[\nu],\,\mathbb{S})$;

\vskip0.5pt

{\rm 4)}~the numbers\/~$\varrho_n$, $n \geq 0$, are the eigenvalues for the
automorphism\/~$\mathbb{S}$ on the space\/ $(I^{\mathbb{Z}}, \theta_0[\nu])$;

\vskip0.5pt

{\rm 5)}~the dynamical system\/ $(I^{\mathbb{Z}},\,\theta_0[\nu],\,\mathbb{S})$
has a~quotient isomorphic to the odometer\/ $(I^{\mathbb{N}},m,\mathbb{O})$.
\end{corollary}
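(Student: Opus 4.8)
The plan is to deduce all five items from Theorem~\ref{t3}, the delta-type property~2) being the central step from which 3), 4), 5) follow formally. Item~1) is immediate: by Definition~\ref{d8} one has $\theta_k[\nu]=\mathcal{P}_{0,k}[\nu]$, and relation~\eqref{eq15} gives $\mathbb{S}^r\theta_k[\nu]=\mathcal{P}_{r,k}[\nu]$, so the asserted mutual singularity is exactly the content of Theorem~\ref{t3}.

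For 2) I would first iterate relation~\eqref{eq16} down from $\mathcal{P}_{0,0}[\nu]=\theta_0[\nu]$ to obtain, for every $k\geq 0$,
\[
\theta_0[\nu]=\frac{1}{2^k}\sum_{r=0}^{2^k-1}\mathcal{P}_{r,k}[\nu].
\]
In particular each $\mathcal{P}_{r,k}[\nu]\ll\theta_0[\nu]$, while Theorem~\ref{t3} makes them pairwise singular; hence there are disjoint Borel sets $B_{r,k}\subset I^{\mathbb{Z}}$ with $\mathcal{P}_{r,k}[\nu](B_{r,k})=1$, on which the average above reduces to $\theta_0[\nu]=2^{-k}\mathcal{P}_{r,k}[\nu]$, so that $d\mathcal{P}_{r,k}[\nu]/d\theta_0[\nu]=2^k\mathbf{1}_{B_{r,k}}$. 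On the other hand, disintegrating $\nu$ over its first projection $\theta_0[\nu]$ as $\nu=\int_{I^{\mathbb{Z}}}\delta_w\times\nu_w\,d\theta_0[\nu](w)$ and using that the second projection of $\nu$ is $m$ with $m(A_{r,k})=2^{-k}$, a direct computation of the projection of $\nu|_{I^{\mathbb{Z}}\times A_{r,k}}$ gives
\[
\frac{d\mathcal{P}_{r,k}[\nu]}{d\theta_0[\nu]}(w)=2^k\,\nu_w(A_{r,k}).
\]

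Comparing the two expressions yields $\nu_w(A_{r,k})=\mathbf{1}_{B_{r,k}}(w)\in\{0,1\}$ for $\theta_0[\nu]$-a.e.\ $w$. Thus, for a.e.\ $w$ and each $k$ there is a unique index $r(w,k)$ with $\nu_w(A_{r(w,k),k})=1$; relation~\eqref{eq16} forces $A_{r(w,k+1),k+1}\subset A_{r(w,k),k}$, so these cylinders are nested and shrink to a single point $\alpha(w)\in I^{\mathbb{N}}$, whence $\nu_w=\delta_{\alpha(w)}$, proving~2). For~3), item~2) says $\nu$ is carried by the graph of $w\mapsto\alpha(w)$, so the (measure-preserving, equivariant) first projection $\pi\colon(w,\alpha)\mapsto w$ is injective $\nu$-mod~$0$ with measurable inverse $w\mapsto(w,\alpha(w))$, hence is the claimed isomorphism onto $(I^{\mathbb{Z}},\theta_0[\nu],\mathbb{S})$.

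Items 4) and 5) are then transported through this isomorphism. Setting $a_n(\alpha)=\sum_{i=1}^n\alpha_i2^{i-1}$, the odometer satisfies $a_n(\mathbb{O}\alpha)\equiv a_n(\alpha)+1\pmod{2^n}$, so $f_n(\alpha)=\varrho_n^{\,a_n(\alpha)}$ is a unimodular eigenfunction of $\mathbb{O}$ with eigenvalue $\varrho_n$; then $(w,\alpha)\mapsto f_n(\alpha)$ is an eigenfunction of $\mathbb{S}\times\mathbb{O}$, and pulling it back by $w\mapsto f_n(\alpha(w))$ through the isomorphism of~3) produces a nonzero eigenfunction of $\mathbb{S}$ on $(I^{\mathbb{Z}},\theta_0[\nu])$ with eigenvalue $\varrho_n$, giving~4). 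For~5), the second projection $(w,\alpha)\mapsto\alpha$ pushes $\nu$ to $m$ and intertwines $\mathbb{S}\times\mathbb{O}$ with $\mathbb{O}$, so via~3) the odometer $(I^{\mathbb{N}},m,\mathbb{O})$ is a quotient of $(I^{\mathbb{Z}},\theta_0[\nu],\mathbb{S})$. The only real work is~2): the main obstacle is converting the \emph{static} mutual singularity of Theorem~\ref{t3} into information about conditional measures. The mechanism that does this is that $\theta_0[\nu]$ is simultaneously the uniform average of the pairwise singular measures $\mathcal{P}_{r,k}[\nu]$, which pins the conditional masses $\nu_w(A_{r,k})$ to the values $0$ and $1$; everything after that is the standard nested-cylinder argument.
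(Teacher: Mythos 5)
Your proof is correct, and it follows the route the paper clearly intends: the paper states Corollary~\ref{c6} without proof as a consequence of Theorem~\ref{t3}, and your argument supplies exactly the missing derivation — identifying $\mathbb{S}^r\theta_k[\nu]$ with $\mathcal{P}_{r,k}[\nu]$ via~\eqref{eq15} for part~1), and converting the pairwise singularity of the $\mathcal{P}_{r,k}[\nu]$ together with $\theta_0[\nu]=2^{-k}\sum_r\mathcal{P}_{r,k}[\nu]$ into the $0$--$1$ law $\nu_w(A_{r,k})\in\{0,1\}$, from which the delta-type property and then 3)--5) follow formally. The one step worth stating explicitly is that the second marginal of $\nu$ equals $m$ (by unique ergodicity of the odometer), which you use to compute $\nu(I^{\mathbb{Z}}\times A_{r,k})=2^{-k}$; the paper uses this fact throughout, so this is not a gap.
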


The following natural question arises: which measures on~$I^{\mathbb{Z}}$ can be
projections of aperiodic measures in~$\operatorname{Ex}(\mathbb{M})$? By
Lemma~\ref{l3}, these measures are ergodic with respect to the
shift~$\mathbb{S}$. It turns out that the condition in part~5)
of Corollary~\ref{c6} is not only necessary but also sufficient.

\begin{lemma}
\label{l6} 
Let\/ $\eta \in \operatorname{Ex}(\mathcal{M}_0)$ and let the
shift\/~$\mathbb{S}$ on the space\/ $(I^{\mathbb{Z}},\eta)$ have a~quotient
isomorphic to the odometer. Then there is an aperiodic measure\/ $\nu \in
\operatorname{Ex}(\mathbb{M})$ for which\/ $\theta_0[\nu]=\eta$. Moreover, the
set of all measures\/~$\nu$ of this kind is naturally parametrized by the points\/
$\alpha \in I^{\mathbb{N}}$ (see Lemma~\ref{l7}).
\end{lemma}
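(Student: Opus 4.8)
The plan is to build the measure $\nu$ through its sequence of partial projections and invoke the structural results already established. By Corollary~\ref{c1}, specifying a measure $\nu \in \mathbb{M}$ with $\theta_0[\nu]=\eta$ is the same as producing a sequence $(\theta_k)_{k\geq 0}$ on $I^{\mathbb{Z}}$ with $\theta_0=\eta$, each $\theta_k\in\mathcal{M}_k$, satisfying the consistency relations~\eqref{eq17}; by Lemma~\ref{l3} the resulting $\nu$ is ergodic precisely when every $\theta_k$ lies in $\operatorname{Ex}(\mathcal{M}_k)$; and by Remark~\ref{r11} together with Definition~\ref{d11} the measure $\nu$ is aperiodic precisely when the sequence $(\theta_k)$ never stabilizes, i.e. $\theta_k\neq\theta_{k+1}$ for all $k$. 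Thus the whole problem reduces to constructing one consistent sequence over $\eta$ that is ergodic at every level and never stabilizes.

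First I would use the odometer quotient to equip $(I^{\mathbb{Z}},\eta)$ with a tower structure. Fix a factor map $\pi\colon (I^{\mathbb{Z}},\eta,\mathbb{S}) \to (I^{\mathbb{N}},m,\mathbb{O})$ realizing the odometer as a quotient, and set $B_{r,k}=\pi^{-1}(A_{r,k})$ for $0\leq r<2^k$, using the cylinders $A_{r,k}$ of Definition~\ref{d8}. Since $\pi$ intertwines $\mathbb{S}$ with $\mathbb{O}$, the shift permutes these sets cyclically, $\mathbb{S}B_{r,k}=B_{r+1\bmod 2^k,\,k}$, each $B_{r,k}$ has $\eta$-measure $2^{-k}$, and $\mathbb{S}^{2^k}$ preserves $B_{0,k}$ where it is the first-return map of $\mathbb{S}$ (the return time to $B_{0,k}$ is constant, equal to $2^k$). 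I then define
\[
\theta_k = 2^k\, \eta|_{B_{0,k}}, \qquad k\geq 0,
\]
the normalized restriction of $\eta$ to $B_{0,k}$. Then $\theta_0=\eta$, each $\theta_k$ is $\mathbb{S}^{2^k}$-invariant (so lies in $\mathcal{M}_k$), and the consistency relation~\eqref{eq17} follows from the disjoint decomposition $B_{0,k}=B_{0,k+1}\sqcup \mathbb{S}^{2^k}B_{0,k+1}$ together with the $\mathbb{S}$-invariance of $\eta$.

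For ergodicity I would argue that $\mathbb{S}^{2^k}$ on $(B_{0,k},\theta_k)$ is the induced (first-return) transformation of the ergodic system $(I^{\mathbb{Z}},\eta,\mathbb{S})$ to the positive-measure set $B_{0,k}$, and the induced map of an ergodic automorphism is again ergodic; hence $\theta_k\in\operatorname{Ex}(\mathcal{M}_k)$ for every $k$, and Lemma~\ref{l3} yields $\nu\in\operatorname{Ex}(\mathbb{M})$. For aperiodicity, observe that $\mathbb{S}^{2^k}\theta_{k+1}$ is carried by $\mathbb{S}^{2^k}B_{0,k+1}=B_{2^k,\,k+1}$, which is disjoint from the support $B_{0,k+1}$ of $\theta_{k+1}$; thus $\theta_{k+1}$ and $\mathbb{S}^{2^k}\theta_{k+1}$ are mutually singular probability measures and cannot coincide. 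By~\eqref{eq17} this gives $\theta_k=\frac{1}{2}(\theta_{k+1}+\mathbb{S}^{2^k}\theta_{k+1})\neq\theta_{k+1}$ for every $k$, so the sequence never stabilizes and $\nu$ is aperiodic. This singularity step is where the hypothesis is used essentially, and it is the main point of the argument: it is precisely because the factor is the \emph{full} odometer that all the sets $B_{r,k}$ are genuinely distinct for every $k$, which forces non-stabilization.

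Finally, for the parametrization statement, the only freedom in the construction is the choice of factor map $\pi$ onto the odometer. Any two such factor maps differ by an automorphism of $(I^{\mathbb{N}},m,\mathbb{O})$ commuting with $\mathbb{O}$, and the centralizer of the odometer consists exactly of the translations by elements of the group $I^{\mathbb{N}}\cong\mathbb{Z}_2$ of dyadic integers. Replacing $\pi$ by its composition with such a translation relabels the towers and produces the various admissible measures $\nu$, yielding the natural parametrization by $\alpha\in I^{\mathbb{N}}$; the verification that distinct $\alpha$ give distinct measures and that every aperiodic ergodic $\nu$ with $\theta_0[\nu]=\eta$ arises this way is deferred to Lemma~\ref{l7}.
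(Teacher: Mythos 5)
Your proof is correct, and the measures you construct are the same as the paper's $\nu^{(\alpha)}[\eta]$: the sets $B_{r,k}=\pi^{-1}(A_{r,k})$ are exactly the level sets $B(r,k)$ of the eigenfunctions $f_k$ that the paper works with (the odometer factor is generated by the eigenfunctions for the eigenvalues $\varrho_k$, and these are unique up to constants by ergodicity). The differences lie in the verification. Where you prove ergodicity of $\theta_k$ under $\mathbb{S}^{2^k}$ by observing that $\mathbb{S}^{2^k}|_{B_{0,k}}$ is the first-return map to $B_{0,k}$ (the return time being constantly $2^k$) and invoking the standard fact that induced transformations of ergodic systems are ergodic, the paper argues spectrally: any $\mathbb{S}^{2^k}$-invariant set produces an eigenfunction for $\varrho_k$, which by simplicity of the spectrum must be proportional to $f_k$, so the only invariant sets are the $B(r,k)$ themselves. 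Your route is more elementary at this step, but the paper's eigenfunction bookkeeping is not wasted --- it is precisely what drives the completeness argument in Lemma~\ref{l7} and the spectral obstruction in Lemma~\ref{l8}, so it gets set up already in Lemma~\ref{l6}. Two further points in your favour: you make the aperiodicity of $\nu$ explicit (mutual singularity of $\theta_{k+1}$ and $\mathbb{S}^{2^k}\theta_{k+1}$, which have disjoint supports, forces non-stabilization of the sequence $(\theta_k)$), a point the paper leaves implicit in this proof; and your identification of the parameter $\alpha$ with the centralizer $\mathbb{Z}_2$ of the odometer is a clean conceptual explanation of why the family is indexed by $I^{\mathbb{N}}$, equivalent to the paper's direct level-by-level choice of the residues $r(n,\alpha)$. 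Both deferrals to Lemma~\ref{l7} (that every such $\nu$ arises this way and that distinct $\alpha$ give distinct measures) are legitimate, since the statement itself points there.
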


\begin{proof}
For every $n \geq 0$ we can find a~unique (up to constant) eigenfunction~$f_n$
corresponding to the eigenvalue~$\varrho_n$:
$$
\mathbb{S} f_n = \varrho_n f_n.
$$
Multiplying these functions by suitable constants if necessary, we may assume
that $f_0=\nobreak1$ and $f_{n}=f_{n+1}^2$, $n \geq 0$. Then, almost everywhere with
respect to the measure~$\eta$, the values of the function~$f_n$ coincide with
the powers $\varrho_n^{r}$, $0\leq r< 2^n-1$, $n \geq 0$. Consider the
level sets of the functions~$f_n$:
$$
B(r,n) = \{w \in I^{\mathbb{Z}}\colon f_n(w) = \varrho_n^{r}\}, \qquad 0 \leq
n, \quad 0\leq r< 2^n-1.
$$
Obviously, for every fixed $n \,{\geq}\, 1$ the map~$\mathbb{S}$ permutes
the sets $B(r,n)$, $r\,{=}\,0,\dots, 2^n\!{-}1$, cyclically and therefore $\eta(B(r,n))=1/2^n$.

Let $\alpha \in I^{\mathbb{N}}$, $\alpha = (\alpha_k)_{k \geq 1}$. We write
$r(n,\alpha) = \sum_{k=0}^{n-1}2^{k}\alpha_{k+1}$, $n \geq 0$. For $n\geq 0$ we
define a~measure $\theta_n^{(\alpha)}$ as the normalized restriction of the
measure $\eta$ to the set $\mathcal{B}(n,\alpha) = B\bigl(r(n,\alpha),
n\bigr)$:
$$
\theta_n^{(\alpha)} = 2^n \eta\big|_{{\mathcal{B}(n,\alpha)}}, \qquad n \geq 0.
$$

It is clear that the measure $\theta_n^{(\alpha)}$ is invariant under
$\mathbb{S}^{2^n}$. It can readily be seen that the sequence of measures
$(\theta_n^{(\alpha)})_{n\geq 0}$ thus constructed satisfies 
relation~\eqref{eq17}. This follows immediately from the fact that the sets
$\mathcal{B}(n,\alpha)$ and~$\mathcal{B}(n+1,\alpha)$ are connected by the
equation
\begin{align*}
&\mathcal{B}(n,\alpha) =
\bigl\{f_{n+1}^2 = \varrho_n^{r(n,\alpha)}\bigr\} =
\bigl\{f_{n+1} = \varrho_{n+1}^{r(n,\alpha)}\bigr\}\cup
\bigl\{f_{n+1} = -\varrho_{n+1}^{r(n,\alpha)}\bigr\}
\\[2pt]
&\qquad=B\bigl(r(n,\alpha),n+1\bigr)\cup B\bigl(2^n+r(n,\alpha),n+1\bigr) =
\mathcal{B}(n+1,\alpha) \cup
\mathbb{S}^{2^n}\mathcal{B}(n+1,\alpha).
\end{align*}
By Corollary~\ref{c1}, the sequence $(\theta_n^{(\alpha)})_{n\geq 0}$ defines
a~measure $\nu^{(\alpha)}[\eta] \in\mathbb{M}$ for which
$\theta_n[\nu^{(\alpha)}[\eta]]=\theta_n^{(\alpha)}$, \,$n \geq 0$. In
particular,
$$
\theta_0[\nu^{(\alpha)}[\eta]] = \theta_0^{(\alpha)}=\eta.
$$

It remains to prove that the measure $\nu^{(\alpha)}[\eta]$ thus constructed is
ergodic. By Lemma~\ref{l3}, its ergodicity is equivalent to the condition 
that the measure $\theta_n^{(\alpha)}$ is ergodic under $\mathbb{S}^{2^n}$ 
for every $n \geq 0$. We now prove this fact.

\vskip1pt

Let a~set $B \subset I^{\mathbb{Z}}$ be invariant under $\mathbb{S}^{2^n}$.
Then the function
$$
f=\sum_{j=0}^{2^n-1} \varrho_n^j \chi_{\rule{0pt}{7pt}\mathbb{S}^j B}
$$
satisfies the equation $\mathbb{S} f {\kern1pt}{=}{\kern1pt} \varrho_n f$.\ 
All the eigenvalues of the operator $\mathbb{S}$ 
are multiplicity-\allowbreak free, and therefore $f = C f_n$ almost
everywhere with respect to the measure~~$\eta$ for some constant $C \in
\mathbb{C}$. This implies that either $\eta(B)=\nobreak0$ or the set~$B$ coincides with
one of the sets $B(r,n)$, $0\leq r \leq 2^n-1$. In particular, the set
$\mathcal{B}(n,\alpha)$ contains no non-trivial $\mathbb{S}^{2^n}$\!-invariant
subsets. Hence, the measure $\theta_n^{(\alpha)}$ is ergodic for
$\mathbb{S}^{2^n}$. This completes the proof of Lemma~\ref{l6}. 
\end{proof}

\begin{lemma}
\label{l7} 
The family of measures\/ $\nu^{(\alpha)}[\eta]$, $\alpha \in
I^{\mathbb{N}}$, constructed in the proof of Lemma~\ref{l6} describes all the
measures in\/~$\operatorname{Ex}(\mathbb{M})$ with the given projection\/~$\eta$.
In other words, if\/~$\nu \in \mathbb{M}$ and\/ $\theta_0[\nu]=\eta$, then\/ 
$\nu = \nu^{(\alpha)}[\eta]$ for some\/ $\alpha \in I^{\mathbb{N}}$.
\end{lemma}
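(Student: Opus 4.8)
The plan is to show that for $\nu\in\operatorname{Ex}(\mathbb{M})$ with $\theta_0[\nu]=\eta$ the whole sequence $\theta_k:=\theta_k[\nu]$ necessarily coincides with the sequence $(\theta_k^{(\alpha)})_{k\ge 0}$ built in the proof of Lemma~\ref{l6} for a suitable $\alpha\in I^{\mathbb{N}}$; by the uniqueness clause of Corollary~\ref{c1} this forces $\nu=\nu^{(\alpha)}[\eta]$. By Lemma~\ref{l3} each $\theta_k$ lies in $\operatorname{Ex}(\mathcal{M}_k)$, i.e.\ is ergodic for $\mathbb{S}^{2^k}$, and by Corollary~\ref{c1} the $\theta_k$ satisfy the relations~\eqref{eq17}. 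I would prove by induction on $k$ the statement: there are digits $\alpha_1,\dots,\alpha_k\in\{0,1\}$ such that, writing $r_k=\sum_{j=0}^{k-1}\alpha_{j+1}2^{j}=r(k,\alpha)$, one has $\theta_k=2^{k}\,\eta|_{B(r_k,k)}=\theta_k^{(\alpha)}$, where the eigenfunctions $f_n$ and level sets $B(r,n)$ are those produced from the odometer quotient of $(I^{\mathbb{Z}},\eta)$ in Lemma~\ref{l6}. The base case $k=0$ is immediate, since $B(0,0)=I^{\mathbb{Z}}$ and $\theta_0=\eta$.

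For the inductive step, assume $\theta_k=2^{k}\eta|_{B(r_k,k)}$. From the averaging relation $\theta_k=\tfrac12(\theta_{k+1}+\mathbb{S}^{2^k}\theta_{k+1})$ in~\eqref{eq17} both summands are nonnegative, so $\theta_{k+1}\le 2\theta_k$; hence $\theta_{k+1}\ll\eta$ and $\theta_{k+1}$ is carried by $B(r_k,k)$. Using $f_k=f_{k+1}^2$ one splits this carrier, exactly as in the display in the proof of Lemma~\ref{l6}, as
\[
B(r_k,k)=\{f_{k+1}=\varrho_{k+1}^{r_k}\}\cup\{f_{k+1}=-\varrho_{k+1}^{r_k}\}=B(r_k,k+1)\cup B(r_k+2^{k},k+1),
\]
the two pieces being swapped by $\mathbb{S}^{2^{k}}$ and each of $\eta$-measure $2^{-(k+1)}$. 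The heart of the argument is that these two pieces are \emph{precisely} the $\mathbb{S}^{2^{k+1}}$-ergodic components of $(B(r_k,k),\eta|_{B(r_k,k)})$: since the eigenvalues of $\mathbb{S}$ are multiplicity-free (as noted in the proof of Lemma~\ref{l6}, and guaranteed for all dyadic powers by Lemma~\ref{l4}), the only $\mathbb{S}^{2^{k+1}}$-invariant functions supported on $B(r_k,k)$ and absolutely continuous with respect to $\eta$ are the linear combinations of $\chi_{B(r_k,k+1)}$ and $\chi_{B(r_k+2^{k},k+1)}$. As $\theta_{k+1}$ is $\mathbb{S}^{2^{k+1}}$-ergodic and $\theta_{k+1}\ll\eta$, it must be the normalized restriction of $\eta$ to one of these two pieces; this determines a digit $\alpha_{k+1}\in\{0,1\}$ with $\theta_{k+1}=2^{k+1}\eta|_{B(r_{k+1},k+1)}$, where $r_{k+1}=r_k+\alpha_{k+1}2^{k}$.

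It remains to verify consistency with~\eqref{eq17} and to read off $\alpha$. A direct computation, using that $\mathbb{S}^{2^k}$ swaps the two pieces $B(r_k,k+1)$ and $B(r_k+2^{k},k+1)$, gives $\tfrac12(\theta_{k+1}+\mathbb{S}^{2^k}\theta_{k+1})=2^{k}\eta|_{B(r_k,k)}=\theta_k$, so both branches are admissible and none is excluded a priori. The recursion $r_{k+1}=r_k+\alpha_{k+1}2^{k}$ with $r_0=0$ yields $r_k=r(k,\alpha)$, whence $\theta_k=\theta_k^{(\alpha)}$ for every $k$, and Corollary~\ref{c1} gives $\nu=\nu^{(\alpha)}[\eta]$. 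In particular the support $B(r_k,k)$ has $\eta$-measure $2^{-k}\to 0$, so the sequence $(\theta_k)$ never stabilizes and $\nu$ is automatically aperiodic, in agreement with Remark~\ref{r11} and Corollary~\ref{c6}. The main obstacle I expect is the ergodic-components step: one must be certain that $B(r_k,k)$ splits into exactly two $\mathbb{S}^{2^{k+1}}$-ergodic pieces and that an ergodic $\theta_{k+1}$ cannot spread across both. This is exactly where the simplicity of the eigenvalues $\varrho_n$ and the absolute continuity $\theta_{k+1}\ll\eta$ are used in an essential way; it is also the point at which ergodicity of $\nu$ (rather than mere membership in $\mathbb{M}$) is indispensable, since a non-ergodic $\nu$ with $\theta_0[\nu]=\eta$ could split $\theta_{k+1}$ over both pieces.
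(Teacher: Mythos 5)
Your proof is correct, and it is organized differently from the paper's. The paper first rules out periodic type via Lemma~\ref{l8}, then invokes the mutual singularity of all $2^n$ projections $\mathcal{P}_{r,n}[\nu]$ (Theorem~\ref{t3}/Corollary~\ref{c6}) to get $2^n$ pairwise disjoint supports $\mathbb{S}^rB_n$, builds from them the eigenfunction $g_n=\sum_r\varrho_n^r\chi_{\mathbb{S}^rB_n}$, and identifies it with $c_nf_n$ to conclude $B_n=B(r_n,n)$; only then does it derive the recursion $r_{n+1}\in\{r_n,r_n+2^n\}$ from the nesting $B_{n+1}\subset B_n$. You instead proceed by induction on $k$, extract the absolute continuity $\theta_{k+1}\leq 2\theta_k\ll\eta$ directly from the averaging relation~\eqref{eq17}, and identify $\theta_{k+1}$ as one of the two $\mathbb{S}^{2^{k+1}}$-ergodic components of $\eta|_{B(r_k,k)}$ — the same eigenfunction/simplicity argument as in the proof of Lemma~\ref{l6}, localized to a single level set. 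Your route buys some economy: it does not need Lemma~\ref{l8} or Theorem~\ref{t3} as inputs, and aperiodicity of $\nu$ falls out as a byproduct (the supports shrink to measure $2^{-k}$, so $(\theta_k)$ never stabilizes), whereas the paper must establish aperiodicity first in order to use Corollary~\ref{c6}. The paper's version, on the other hand, exhibits the full family $\{\mathcal{P}_{r,n}[\nu]\}_r$ and its matching with all level sets of $f_n$ at once, which is the structure reused in the surrounding discussion of conditional measures. Your flagged ``main obstacle'' — that $B(r_k,k)$ splits into exactly two ergodic pieces and an ergodic $\theta_{k+1}\ll\eta$ cannot charge both — is handled correctly: two distinct ergodic measures are mutually singular, so an ergodic measure absolutely continuous with respect to a two-atom ergodic decomposition must equal one of the atoms.
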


Before passing to the proof of this lemma, we discuss a~spectral property
of measures of periodic type.

\begin{lemma}
\label{l8} 
Let\/ $k\geq0$, \,$\eta\in\mathcal{M}_k$ and\/
$D_k[\eta]\in\operatorname{Ex}(\mathbb{M})$. Then the
numbers\/~$\varrho_n$, $n>\nobreak k$, are not eigenvalues of the operator\/~$\mathbb{S}$
on the space\/ $(I^{\mathbb{Z}}, \theta_0[D_k[\eta]])$.
\end{lemma}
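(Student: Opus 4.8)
The plan is to compute the projected measure explicitly, transfer an eigenfunction of $\mathbb{S}$ on it to an eigenfunction of $\mathbb{S}^{2^k}$ on the base measure $\eta$, and then use the strong ergodicity of the high powers of $\mathbb{S}^{2^k}$ to rule out the root $\varrho_{n-k}$.

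First I would write $\mu:=\theta_0[D_k[\eta]]$ explicitly. Since $\theta_0=\mathcal{P}_{0,0}$ is the full projection to $I^{\mathbb{Z}}$ (Definition~\ref{d8}), projecting formula~\eqref{eq13} and using $m(A_{i,k})=2^{-k}$ gives
\begin{equation*}
\mu=\frac{1}{2^k}\sum_{i=0}^{2^k-1}\mathbb{S}^i\eta .
\end{equation*}
The $i=0$ summand shows $\mu\geq 2^{-k}\eta$, so $\eta\ll\mu$ with $d\eta/d\mu\leq 2^k$, and restriction maps $L^2(\mu)$ into $L^2(\eta)$. I would then record two free ergodicity facts: because $D_k[\eta]\in\operatorname{Ex}(\mathbb{M})$, Lemma~\ref{l3} makes $\mu$ $\mathbb{S}$-ergodic, while Lemma~\ref{l5} makes $\eta$ $\mathbb{S}^{2^{k+1}}$-ergodic; applying Lemma~\ref{l4} to $T=\mathbb{S}^{2^k}$ then yields that $\mathbb{S}^{2^n}$ is $\eta$-ergodic for every $n>k$.

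For the transfer, suppose toward a contradiction that $\mathbb{S}f=\varrho_n f$ for some $n>k$ and some nonzero $f\in L^2(\mu)$. As $|\varrho_n|=1$, the modulus $|f|$ is $\mathbb{S}$-invariant, so the $\mathbb{S}$-ergodicity of $\mu$ forces $|f|\equiv c>0$. Iterating the eigenrelation $2^k$ times inside the genuinely $\mathbb{S}$-invariant system $(I^{\mathbb{Z}},\mu)$ gives $\mathbb{S}^{2^k}f=\varrho_n^{2^k}f=\varrho_{n-k}f$ $\mu$-a.e.\ (using $\varrho_n^{2^k}=\varrho_{n-k}$ from Definition~\ref{d14}). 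Because $\eta\ll\mu$, this identity persists $\eta$-a.e.; here $\mathbb{S}^{2^k}$ really preserves $\eta$, one has $f\in L^2(\eta)$, and $f\neq0$ $\eta$-a.e.\ (as $|f|\equiv c>0$). Thus $\varrho_{n-k}$ is an eigenvalue of $\mathbb{S}^{2^k}$ on $(I^{\mathbb{Z}},\eta)$.

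The contradiction is then immediate: for $n>k$ the number $\varrho_{n-k}$ is a nontrivial $2^{n-k}$-th root of unity, so $\mathbb{S}^{2^n}f=(\mathbb{S}^{2^k})^{2^{n-k}}f=\varrho_{n-k}^{2^{n-k}}f=f$ $\eta$-a.e.; by the $\eta$-ergodicity of $\mathbb{S}^{2^n}$ the eigenfunction $f$ must be constant, which is incompatible with $\mathbb{S}^{2^k}f=\varrho_{n-k}f$ and $\varrho_{n-k}\neq1$. Hence $\varrho_n$ is not an eigenvalue. The crux is the transfer step: $\eta$ is only $\mathbb{S}^{2^k}$-invariant and $\mu$ is a (possibly degenerate) average of its $\mathbb{S}$-shifts, so one must iterate the eigenrelation within the invariant system $(\mu,\mathbb{S})$ and only then descend to $\eta$ via $\eta\ll\mu$, the survival of $f$ on $\operatorname{supp}\eta$ being guaranteed precisely by the constant-modulus property coming from the $\mathbb{S}$-ergodicity of $\mu$.
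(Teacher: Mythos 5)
Your proof is correct, and it rests on the same two pillars as the paper's: the explicit formula $\theta_0[D_k[\eta]]=2^{-k}\sum_{i=0}^{2^k-1}\mathbb{S}^i\eta$ obtained by projecting \eqref{eq13}, and the fact (Lemmas~\ref{l5} and~\ref{l4}) that $\eta$ and its shifts are ergodic for $\mathbb{S}^{2^n}$ when $n>k$. Where you diverge is in the finishing move. The paper never descends to an eigenrelation on $\eta$: it observes that $\mathbb{S}^{2^n}f=f$ on each $\mathbb{S}^j\eta$, so $f$ is constant on each of the $2^k$ pieces and hence takes at most $2^k$ values $\theta_0$-a.e., while the relation $\mathbb{S}f=\varrho_n f$ would force a nonzero $f$ to take at least $2^n>2^k$ distinct values on sets of positive measure --- a purely combinatorial value count. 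You instead invoke the $\mathbb{S}$-ergodicity of $\theta_0[D_k[\eta]]$ (via Lemma~\ref{l3}) to pin $|f|$ to a positive constant, push the iterated eigenrelation $\mathbb{S}^{2^k}f=\varrho_{n-k}f$ down to $(I^{\mathbb{Z}},\eta)$ through $\eta\ll\mu$, and contradict the $\mathbb{S}^{2^n}$-ergodicity of $\eta$ there. Your route needs one extra hypothesis input (ergodicity of the projection itself) and the absolute-continuity bookkeeping, but in exchange it avoids the slightly delicate "at least $2^n$ distinct values" step, which in the paper implicitly relies on $f$ already being a simple function before the count is made; both arguments are sound and of comparable length.
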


\begin{proof}
Let
\begin{equation}
\label{eq19}
\mathbb{S} f = \varrho_n f
\end{equation}
almost everywhere with respect to the measure $\theta_0=\theta_0[D_k[\eta]]$.
We claim that $f=\nobreak0$~almost everywhere with respect to the
measure~$\theta_0$. It is clear that $\mathbb{S}^{2^n}f =\nobreak f$ almost 
everywhere with respect to the measures $\mathbb{S}^j\eta$, \,$0\leq j \leq 2^k-1$. 
By Lemmas~\ref {l5} and~\ref{l4}, the measures~$\mathbb{S}^j\eta$, 
\,$0\leq j \leq 2^k-1$, 
are ergodic for $\mathbb{S}^{2^n}\!$. Hence, the function~$f$ is 
constant almost everywhere with respect to each of them. It follows that 
$f$ takes at most $2^k$ distinct values on some subset of full measure
with respect to~$\theta_0$. On the other hand, by 
relation~\eqref{eq19}, if~$f$~is not almost everywhere zero, then
it must take at least $2^n$ different values on sets of positive measure.
Hence, $f=\nobreak0$ almost everywhere with respect to~$\theta_0$. 
This completes the proof of Lemma~\ref{l8}. 
\end{proof}

\begin{proof}[of Lemma~\ref{l7}]
We use the notation in the proof of Lemma~\ref{l6}. Let $\nu \in
\mathbb{M}$ be ergodic and let $\theta_0[\nu]=\eta$. It follows from
Lemma~\ref{l8} that the measure~$\nu$ cannot~be~of periodic type. 
By Corollary~\ref{c6}, the measures $\mathcal{P}_{r,n}[\nu]$,
$0\leq r \leq 2^n-1$, are pairwise mutually singular for a~fixed $n \geq\nobreak 0$. 
Moreover, by the relations in Remark~\ref{r10}, the following equation holds:
$$
\eta = \frac{1}{2^n} \sum_{r=0}^{2^n-1} \mathcal{P}_{r,n}[\nu].
$$
Let a~set $B_n \subset I^{\mathbb{Z}}$ be such that $\mathcal{P}_{0,n}[\nu] =
2^n \eta\big|_{B_n}$. Then $\mathcal{P}_{r,n} = 2^n \eta\big|_{\mathbb{S}^r
B_n}$, \,$0\leq r \leq 2^n-1$, and the sets $\mathbb{S}^{r}B_n$ are pairwise
disjoint.

The function
$$
g_n = \sum_{r=0}^{2^n-1} \varrho_n^r \chi_{\rule{0pt}{7pt}\mathbb{S}^{r}B_n}
$$
satisfies the relation $\mathbb{S} g_n = \varrho_n g_n$. Therefore, $g_n = c_n
f_n$ for some constant $c_n \in \mathbb{C}$. This implies that $B_n = B(r_n,n)$
for some $r_n$, $0\leq r_n \leq 2^n-1$, and $c_n = \varrho_n^{-r_n}$.

\vskip1pt

{\advance\baselineskip by 0.7pt

It remains to find out how the numbers~$r_n$ and~$r_{n+1}$ are related. We note
that the measure $\mathcal{P}_{0,n}$ is the semi-sum of the measures
$\mathcal{P}_{0,n+1}$ and $\mathcal{P}_{2^n,n+1}$. Hence, $B_{n+1}$ is
a~subset of~$B_n$. Thus, $B(r_{n+1},n+1)\,{\subset}\,B(r_n,n)$. By the
definition of these sets, this means that $\varrho_{n+1}^{2r_{n+1}}\,{=}\,
\varrho_{n}^{r_{n}}$, that is, $\smash{(r_{n+1}-r_n) \,\vdots\, 2^n}$. Hence, either
$r_{n+1}=r_n$ or $r_{n+1}=r_n+2^n$. We set $\alpha_{n+1}=2^{-n}(r_{n+1}-r_n)$,
\,$n \geq\nobreak 0$. Then $r_n=r(n,\alpha)$ and $B_n= \mathcal{B}(n,\alpha)$, which
implies that $\theta_n[\nu]=\theta_n^{(\alpha)}$ and
$\nu=\nu^{(\alpha)}[\eta]$. This completes the proof of Lemma~\ref{l7}. 

}
\end{proof}

\subsubsection{Description of all ergodic measures}
\label{sss5.3.3}
Finally, we have obtained the following description of the ergodic
measures in~$\mathbb{M}$.

\begin{theorem}
\label{t4} 
The ergodic measures in the set\/~$\mathbb{M}$ are divided into two
disjoint classes.

{\rm 1)}~Ergodic measures of periodic type\/~$k$, $k \geq 0$, of the form
$$
D_k[\eta], \quad \text{where\/}\quad \eta \in \mathcal{M}_k\cap
\operatorname{Ex}(\mathcal{M}_{k+1}).
$$
The projections of ergodic measures of periodic type to the space\/
$I^{\mathbb{Z}}$ admit the following description: a~measure\/ $\widetilde\eta$
on\/~$I^{\mathbb{Z}}$ which is ergodic with respect to the shift\/~$\mathbb{S}$ 
is the projection of some ergodic measure of periodic type\/~$k$
on\/~$I^{\mathbb{Z}}\times I^{\mathbb{N}}$,  
\,$k \geq\nobreak 0$, if and only if 
the operator\/ $\mathbb{S}$ on\/~$(I^{\mathbb{Z}},\eta)$ has a~quotient isomorphic 
to the shift on the set of\/~$2^n$ points with uniform measure with\/ $n=k$ 
rather than\/ $n=k+1$.

{\rm 2)}~Ergodic aperiodic measures of the form\/ $\nu^{(\alpha)}[\eta]$,
where the measure\/~$\eta$ on~the space\/ $I^{\mathbb{Z}}$ is invariant 
and ergodic with respect to the shift\/~$\mathbb{S}$ and, moreover, 
the~operator\/~$\mathbb{S}$ on\/~$(I^{\mathbb{Z}},\eta)$ has a~quotient 
isomorphic to the odometer.\footnote{
The authors are grateful to the referee for indicating
another possible proof of this theorem using the general technique
of (Furstenberg--Zimmer) automorphism extensions. While our proof is 
more specific, the alternative may possibly generalize in a~simpler way 
to problems of describing the invariant measures for adic transformations.}
\end{theorem}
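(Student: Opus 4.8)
The plan is to assemble Theorem~\ref{t4} from the structural results already obtained, treating the two classes separately after recalling the basic dichotomy. By Definition~\ref{d11} and Remark~\ref{r9}, every measure $\nu \in \operatorname{Ex}(\mathbb{M})$ is either of periodic type~$k$ for a unique $k \geq 0$, or aperiodic; thus it suffices to describe $\operatorname{Ex}(\mathbb{M}) \cap (\mathbb{M}_k \setminus \mathbb{M}_{k-1})$ for each $k$ and $\operatorname{Ex}(\mathbb{M}_\infty)$, and in each case to identify the possible projections to $I^{\mathbb{Z}}$.

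For the periodic class I would first invoke Lemma~\ref{l5} together with the fact (Remark~\ref{r9}) that $D_k$ is an affine homeomorphism of $\mathcal{M}_k$ onto $\mathbb{M}_k$: this identifies the ergodic measures of periodic type exactly~$k$ with the measures $D_k[\eta]$ for which $\eta \in \operatorname{Ex}(\mathcal{M}_{k+1})$ and $\eta \in \mathcal{M}_k \setminus \mathcal{M}_{k-1}$ (the last condition, by Remark~\ref{r8}, being what makes the type equal to~$k$ rather than smaller). It then remains to characterise the projections $\theta_0[D_k[\eta]] = 2^{-k}\sum_{i=0}^{2^k-1}\mathbb{S}^i\eta$. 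Recalling that a factor isomorphic to the rotation on $2^n$ points is equivalent, for an ergodic system, to $\varrho_n$ lying in the point spectrum (eigenvalues forming a group), I would argue both directions. For necessity, the cyclic permutation of the pairwise singular measures $\mathbb{S}^i\eta$ (distinct ergodic components, since the type is exactly~$k$) exhibits a factor onto the $2^k$-point rotation, so $\varrho_k$ is an eigenvalue, while Lemma~\ref{l8} rules out $\varrho_n$ for $n>k$, giving precisely ``$n=k$ rather than $n=k+1$''. For sufficiency, given an $\mathbb{S}$-ergodic $\widetilde\eta$ whose largest such factor is the $2^k$-point rotation, I would take the (multiplicity-free) eigenfunction $f_k$ with $\mathbb{S} f_k = \varrho_k f_k$, cut $I^{\mathbb{Z}}$ into its level sets $B(r,k)$, and set $\eta = 2^k\,\widetilde\eta|_{B(0,k)} \in \mathcal{M}_k$; the absence of $\varrho_{k+1}$ as an eigenvalue forces $\eta \in \operatorname{Ex}(\mathcal{M}_{k+1})$ through the eigenvalue/ergodicity mechanism of Lemma~\ref{l4}, so $D_k[\eta] \in \operatorname{Ex}(\mathbb{M})$ by Lemma~\ref{l5} and projects onto $\widetilde\eta$.

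For the aperiodic class the work is already essentially done: Lemma~\ref{l7} shows that every ergodic measure with a prescribed projection $\eta$ is one of the measures $\nu^{(\alpha)}[\eta]$, $\alpha \in I^{\mathbb{N}}$; part~5) of Corollary~\ref{c6} shows that any aperiodic ergodic measure has a projection for which $\mathbb{S}$ admits a quotient isomorphic to the odometer; and Lemma~\ref{l6} produces aperiodic ergodic measures $\nu^{(\alpha)}[\eta]$ whenever $\eta$ is $\mathbb{S}$-ergodic with an odometer quotient. I would combine these three facts to conclude that the aperiodic members of $\operatorname{Ex}(\mathbb{M})$ are exactly the $\nu^{(\alpha)}[\eta]$ with $\eta$ ergodic for $\mathbb{S}$ and possessing an odometer quotient, which is the assertion of class~2).

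The main obstacle I anticipate is the projection characterisation in the periodic case, and in particular its sufficiency direction. Necessity reduces cleanly to Lemma~\ref{l8} plus the visible cyclic structure, but the converse requires reconstructing, from the sole datum of a maximal $2^k$-point rotation factor, a base measure $\eta$ lying in $\operatorname{Ex}(\mathcal{M}_{k+1})$; the delicate point is to convert the non-existence of the eigenvalue $\varrho_{k+1}$ into the $\mathbb{S}^{2^{k+1}}$-ergodicity of the cut-down measure $\eta$, which is the quantitative heart of matching the index~$k$ exactly. Everything else is bookkeeping with the grading of Corollary~\ref{c5} and the bijections of Remarks~\ref{r9} and~\ref{r11}.
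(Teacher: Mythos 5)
Your proposal is correct and follows essentially the same route as the paper, which presents Theorem~\ref{t4} as a summary assembled from Lemma~\ref{l5}, Corollary~\ref{c5}, Lemmas~\ref{l6}--\ref{l8} and Corollary~\ref{c6} rather than giving a separate proof. Your explicit two-sided treatment of the rotation-factor characterization of the projections in the periodic case (necessity from the cyclic permutation of the mutually singular measures $\mathbb{S}^i\eta$ together with Lemma~\ref{l8}, sufficiency by cutting $\widetilde\eta$ along the level sets of the eigenfunction $f_k$ and invoking Lemma~\ref{l4}) supplies details the paper leaves implicit, and it is sound.
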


\begin{remark}
\label{r12} 
The ergodic measures in~$\mathbb{M}$ of periodic type~$0$ are
direct products of the form $\eta\times m$, where $\eta$ stands for an
$\mathbb{S}$-invariant measure on~$I^{\mathbb{Z}}$ which is ergodic 
with respect to the transformation~$\mathbb{S}^2$.
\end{remark}

The properties of ergodic aperiodic measures were described in detail
in Corollary~\ref{c6}.

\subsection({Additional information about the ergodic measures
on~\$I\000\136\{\000\134mathbb\{Z\}\}\000\134times
I\000\136\{\000\134mathbb\{N\}\}\$: the structure of conditional
measures}){Additional information about the ergodic measures
on~$I^{\mathbb{Z}}\times I^{\mathbb{N}}$: 
the structure of conditional measures}
\label{ss5.4}
Let an ergodic automorphism~$T$ of the~Lebesgue space $(X,\mu)$
have an invariant measurable partition~$\varsigma$. In this case, 
the automorphism~$T$ has a~quotient automorphism~$T_{\varsigma}$ 
acting on the quotient space~$X_{\varsigma}$ with the quotient 
measure~$\mu_\varsigma$. Then one says that 
\textit{$T$ is a~skew product over the automorphism\/~$T_{\varsigma}$
of the space\/~$X_{\varsigma}$} with some system of automorphisms of the fibres
(see, for example, \cite{28}). In this case, it is usually assumed that, because
of the ergodic property, the space $(X,\mu)$ is naturally isomorphic to the direct
product of the space $(X_{\varsigma}, \mu_{\varsigma})$ and the `generic
fibre' with the generic measure of the fibre, that is, a~generic conditional
measure on the elements of the partition $\varsigma$. However, especially
if the space~$X$ is equipped with the topology of a~separable metric space, it is 
more convenient not to make a~uniformization of the fibres but to assume that the
fibres remain subsets of the original space~$X$ with conditional measures
as Borel measures on the fibres of the partition. Then the action of the
automorphism is not changed, and only the problem of explicitly calculating
the conditional measures remains. Usually, in Rokhlin theory, these
measures are computed using a~basis of the measurable partition~$\varsigma$,
and this is a~general method.\footnote{
We note that the family of conditional
(canonical) measures corresponds to the kernel of the operator of conditional
expectation (an orthogonal projection) onto the subalgebra corresponding to the
partition~$\varsigma$.} 
In our case, the basis of the partition is determined
by the structure of the quotient automorphism (the odometer), more precisely,
by its spectrum. Thus, almost all the conditional measures (which are pairwise
mutually singular) become limits of sequences of measures of the type
of ergodic averagings (see below). We do not use any information about the
conditional measures below. However, a~similar method can be used for other
skew products.

\begin{definition}
\label{d15} 
We say that a~probability measure~$\vartheta$ on~$I^{\mathbb{Z}}$
is \textit{averageable} if for any $k \geq 1$ there is a~measure $\vartheta_k$
on~$I^{\mathbb{Z}}$ which is the weak limit of the sequence of measures
$\vartheta_{k,n}$, $n\to \infty$, where
\begin{equation}
\label{eq20}
\vartheta_{k,n} = \frac{1}{n} \sum_{j=0}^{n-1} \mathbb{S}^{j 2^k}\vartheta,
\end{equation}
and the sequence of measures $\vartheta_{k}$ tends weakly to the
measure~$\vartheta$ as~$k\to \infty$.
\end{definition}

The following lemma is a~consequence of the ergodic theorem and the Lebesgue
differentiation theorem.

\begin{lemma}
\label{l9} 
If\/~$\nu \in \mathbb{M}$, then for almost all\/ $\alpha \in
I^{\mathbb{N}}$ the conditional measure on the fibre\/
$I^{\mathbb{Z}}\times\{\alpha\}$ is averageable as a~measure
on\/~$I^{\mathbb{Z}}$. Here the measure\/~$\nu$ is uniquely determined 
by the generic conditional measure.
\end{lemma}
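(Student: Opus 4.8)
The plan is to disintegrate $\nu$ along the projection to $I^{\mathbb{N}}$ and to exploit the covariance of the resulting conditional measures under $\mathbb{S}\times\mathbb{O}$, which reduces the two limits in Definition~\ref{d15} to the Birkhoff ergodic theorem for the powers of the odometer and to the martingale (Lebesgue differentiation) theorem, respectively. First I would write $\nu=\int_{I^{\mathbb{N}}}\nu_\alpha\,dm(\alpha)$, where $\nu_\alpha$ is the conditional measure on the fibre $I^{\mathbb{Z}}\times\{\alpha\}$, regarded as a probability measure on $I^{\mathbb{Z}}$; this is legitimate because the projection of $\nu$ to $I^{\mathbb{N}}$ is the Lebesgue measure~$m$. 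Since $\mathbb{S}\times\mathbb{O}$ preserves $\nu$ and $\mathbb{O}$ preserves $m$, uniqueness of the disintegration yields the covariance relation $\mathbb{S}_*\nu_\alpha=\nu_{\mathbb{O}\alpha}$ for $m$-almost every~$\alpha$; discarding a null set together with its (measure-preserving, countable) $\mathbb{O}$-orbit, I may assume that $\mathbb{S}^j_*\nu_\alpha=\nu_{\mathbb{O}^j\alpha}$ holds for all $j\in\mathbb{Z}$.

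Now fix $k\geq1$ and take $\vartheta=\nu_\alpha$ in~\eqref{eq20}. By the covariance relation, $\vartheta_{k,n}=\frac1n\sum_{j=0}^{n-1}\nu_{\mathbb{O}^{j2^k}\alpha}$, so for a continuous $\phi$ on $I^{\mathbb{Z}}$ one has $\int\phi\,d\vartheta_{k,n}=\frac1n\sum_{j=0}^{n-1}g\bigl((\mathbb{O}^{2^k})^j\alpha\bigr)$ with $g(\beta)=\int\phi\,d\nu_\beta\in L^\infty(I^{\mathbb{N}},m)$. The transformation $\mathbb{O}^{2^k}$ (addition of $2^k$) fixes the first $k$ coordinates and acts as an odometer on the tail, so its ergodic components are precisely the cylinders $A_{r,k}$, $0\le r<2^k$. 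By the Birkhoff ergodic theorem these time averages converge $m$-a.e.\ to the conditional expectation of $g$ over $A_{r,k}$, which by Definition~\ref{d8} equals $\int\phi\,d\mathcal{P}_{r,k}[\nu]$ for $\alpha\in A_{r,k}$. Thus, for each fixed $\phi$, one has $\int\phi\,d\vartheta_{k,n}\to\int\phi\,d\mathcal{P}_{r(\alpha,k),k}[\nu]$ almost everywhere, where $r(\alpha,k)=\sum_{i=1}^k\alpha_i2^{i-1}$. Choosing a countable dense subset of $C(I^{\mathbb{Z}})$ and intersecting the corresponding full-measure sets over all test functions and all $k$, I conclude that for a.e.~$\alpha$ the weak limits $\vartheta_k=\mathcal{P}_{r(\alpha,k),k}[\nu]$ exist for every~$k$, which is the first requirement in Definition~\ref{d15}.

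For the second requirement I would observe that $\int\phi\,d\vartheta_k=\mathbb{E}[g\mid\mathcal{F}_k](\alpha)$, where $\mathcal{F}_k$ is the finite algebra generated by the first $k$ coordinates of $\alpha$, whose atoms are exactly the sets $A_{r,k}$. As $k\to\infty$ the algebras $\mathcal{F}_k$ increase to the full Borel algebra and their atoms shrink to $\{\alpha\}$, so the martingale convergence theorem (equivalently, Lebesgue differentiation) gives $\mathbb{E}[g\mid\mathcal{F}_k](\alpha)\to g(\alpha)=\int\phi\,d\nu_\alpha$ for a.e.~$\alpha$. Using once more a countable dense family of test functions, $\vartheta_k\to\nu_\alpha$ weakly for a.e.~$\alpha$, which completes the verification that $\nu_\alpha$ is averageable.

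Finally, for the uniqueness statement, a single generic conditional measure $\nu_\alpha$ already determines $\nu$: the averaging just described recovers $\mathcal{P}_{r(\alpha,k),k}[\nu]$ for all~$k$, the shift relations $\mathbb{S}\mathcal{P}_{r,k}[\nu]=\mathcal{P}_{r+1,k}[\nu]$ of~\eqref{eq15} then yield every $\mathcal{P}_{r,k}[\nu]$, in particular $\theta_k[\nu]=\mathcal{P}_{0,k}[\nu]$, and by Corollary~\ref{c1} the sequence $(\theta_k[\nu])_{k\ge0}$ determines $\nu$ uniquely. The main obstacle is essentially bookkeeping of exceptional null sets: the two limits, first in $n$ and then in $k$, must be taken for a.e.~$\alpha$ simultaneously over a dense set of continuous functions, and this is exactly what the separability of $C(I^{\mathbb{Z}})$ delivers. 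The conceptual heart of the argument is the identification, through covariance, of the fibrewise $\mathbb{S}^{2^k}$-averages of $\nu_\alpha$ with the odometer averages of the map $\alpha\mapsto\nu_\alpha$, which converts the whole statement into the ergodic theorem and the differentiation theorem.
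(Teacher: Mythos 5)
Your proposal is correct and follows essentially the same route as the paper: disintegration over $I^{\mathbb{N}}$, the covariance relation $\mathbb{S}^j\nu_\alpha=\nu_{\mathbb{O}^j\alpha}$, the Birkhoff theorem for $\mathbb{O}^{2^k}$ on the cylinders $A_{r,k}$ to identify $\vartheta_k$ with $\mathcal{P}_{r(\alpha,k),k}[\nu]$, and the Lebesgue differentiation (martingale) theorem for the limit in $k$, with a countable family of test functions handling the null sets. Your uniqueness argument via~\eqref{eq15} and Corollary~\ref{c1} merely makes explicit a step the paper leaves implicit.
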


\begin{proof}
Let $\nu[\alpha]$ be the conditional measure on a~fibre
$I^{\mathbb{Z}}\times\{\alpha\}$. Then, since the measure~$\nu$ is invariant
under the transformation $\mathbb{S}\times\mathbb{O}$, it follows
that for almost all $\alpha \in I^{\mathbb{N}}$ we have
$$
\mathbb{S}^{j} \nu[\alpha] = \nu[\mathbb{O}^j \alpha], \qquad j \in \mathbb{Z}.
$$

Let $\phi$ be the characteristic function of some cylindrical set
in~$I^{\mathbb{Z}}$. Consider the function~$\psi$ on~$I^{\mathbb{N}}$
defined almost everywhere by the formula
$$
\psi(\alpha) = \int_{I^{\mathbb{Z}}} \phi(w) \,d\nu[\alpha](w),
$$
that is, the integral of the function~$\phi$ over the fibre. Let $k \geq 0$.
Then for almost all $\alpha \in I^{\mathbb{N}}$ we have the equation
\begin{equation}
\label{eq21}
\frac{1}{n}\sum_{j=0}^{n-1} \psi\bigl(\mathbb{O}^{j 2^k} \alpha\bigr) =
\int_{I^{\mathbb{Z}}} \phi(w) \,d\nu[\alpha]_{k,n}(w),
\qquad n \geq 1,
\end{equation}
where the measure $\nu[\alpha]_{k,n}$ is the averaging (of the measure
$\nu[\alpha]$) defined by formula~\eqref{eq20}. The transformation
$\mathbb{O}^{2^k}$ is ergodic on each of the cylinders $A_{r,k}$, $0\leq r
\leq 2^k-1$, (see Definition~\ref{d8}) with the Lebesgue measure, and
therefore, by the ergodic theorem, for almost all $\alpha \in I^{\mathbb{N}}$
the left-hand side of equation~\eqref{eq21} converges as~$n\to \infty$
to the mean value of the function~$\psi$ over the cylindrical set~$A_{r,k}$
containing the point~$\alpha$, that is,
$$
\int_{I^{\mathbb{Z}}} \phi(w) \,d\nu[\alpha]_{k,n}(w) \xrightarrow{n \to
\infty} 2^k \int_{(w,\beta) \in I^{\mathbb{Z}}\times A_{r,k}} \phi(w) \,d\nu(w,\beta) =
\int_{I^{\mathbb{Z}}} \phi\,d\mathcal{P}_{r,k}[\nu]
$$
provided that $\alpha \in A_{r,k}$. Hence, for every $k \geq 0$ and almost
every $\alpha \in I^{\mathbb{N}}$ the measures $\nu[\alpha]_{k,n}$ converge
weakly to the measure $\mathcal{P}_{r(k,\alpha),k}[\nu]$, where the number
$r(k,\alpha)$ is chosen in such a~way that $\alpha \in A_{r(k,\alpha),k}$.

\vskip1pt

Further, applying the Lebesgue differentiation theorem, we see that for almost
every $\alpha \in I^{\mathbb{N}}$ the average value of the function~$\psi$ over
the cylindrical set $A_{r(k,\alpha),k}$ containing the point~$\alpha$ converges
to~$\psi(\alpha)$ as~$k \to \infty$. By choosing~$\phi$ to be a~countable family 
of cylinders, we get that for almost all $\alpha \in I^{\mathbb{N}}$ 
the measure $\mathcal{P}_{r(k,\alpha),k}[\nu]$ converges to~$\nu[\alpha]$. 
Hence, for almost all $\alpha \in I^{\mathbb{N}}$ the
measure $\nu[\alpha]$ is averageable.

Moreover, for almost every $\alpha \in I^{\mathbb{N}}$ 
\,$\nu[\alpha]$ uniquely defines the sequence of measures
$\mathcal{P}_{r(k,\alpha),k}[\nu]$, $k \geq 0$ (as the limits of the averaging),
and therefore it also determines the whole of the measure~$\nu$. This completes the
proof of Lemma~\ref{l9}. 
\end{proof}

\begin{remark}
\label{r13} 
If~$\nu \in \operatorname{Ex}(\mathbb{M})$, then for almost all
$\alpha \in I^{\mathbb{N}}$ the conditional measures in~the fibres
$I^{\mathbb{Z}}\times\{\alpha\}$ are extreme points of the set of averageable
measures on~$I^{\mathbb{Z}}$.

\end{remark}

\section{Evaluation of a~scaling sequence}
\label{s6}

\vskip1pt

In this section we recall the definition of the scaling sequence of an action~of 
a~group (in our case, $\mathbb{Z}$ or~$\mathcal{D}$) and also the definition
of the scaling sequence of~a~filtration. In~\cite{27}, the scaling sequences of the
action~$\kappa$ of~$\mathcal{D}$ and of the adic action of~$\mathbb{Z}$ on the
path space $\mathcal{T}(\mathrm{OP})$ of the graph of ordered
pairs~$\mathrm{OP}$ with special measures~$\mu^\sigma$ were calculated. 
It turned out that the classes of scaling sequences of these actions coincide. 
In this section, we describe another way of calculating a~scaling sequence 
for the action~$\kappa$ of~$\mathcal{D}$ on the space
$(\mathcal{T}(\mathrm{OP}),\mu^\sigma)$. Moreover, we find the class of scaling
sequences of the tail filtration $\xi = (\xi_n)_{n \geq 0}$ on the space
$(\mathcal{T}(\mathrm{OP}),\mu^\sigma)$ and show that it coincides with the
class of scaling sequences of the actions described previously.

\subsection{Recalling the definitions and properties}
\label{ss6.1}
We recall the part of the theory of admissible metrics we need 
(see, for example,~\cite{29},~\cite{30},~\cite{5},~\cite{6},~\cite{11},~\cite{31} 
and~\cite{27}).

\subsubsection({Admissible semimetrics and \$\000\134varepsilon\$-entropy})
{Admissible semimetrics and $\varepsilon$-entropy}
\label{sss6.1.1}

\begin{definition}
\label{d16} 
Let $(X,\mu)$ be a~Lebesgue space. A~semimetric $\rho$ on~$X$ (and
the triple $(X,\mu,\rho)$) is said to be \textit{admissible} if~$\rho$ is
measurable as a~function of two variables with respect to the measure~$\mu^2$
and there is a~subset of full measure on which $\rho$ is separable.
If~$\rho$ is a~metric on such a~subset, then it is called an
\textit{admissible metric}. The cone of admissible summable (with respect
to~$\mu^2$) semimetrics on~$(X,\mu)$ is denoted
by~$\mathcal{A}\mathrm{dm}(X,\mu)$.
\end{definition}

To work with admissible semimetrics, the following norm on the space
of functions of two variables is convenient. It has been referred to 
as an $m$-norm (see~\cite{11}).

\begin{definition}
\label{d17} 
For $f \in L^1(X^2,\mu^2)$ we write
\begin{multline*}
\|f\|_m = \inf\bigl\{\|\rho\|_{_{L^1(X^2,\mu^2)}}\colon |f|\leq \rho
\ \,\text{almost everywhere with respect to }\,\mu^2, 
\\
\text{ where } \,\rho\, \text{ is a~measurable semimetric
on } \,(X,\mu)\bigr\}.
\end{multline*}
\end{definition}

\begin{definition}
\label{d18} 
Let $\rho$ be a~measurable (as a~function of two variables)
semimetric on~$(X,\mu)$ and let $\varepsilon>\nobreak0$. By the
$\varepsilon$-\textit{entropy of the triple} $(X,\mu,\rho)$ we mean the number
$\mathbb{H}_\varepsilon(X,\mu,\rho)$ defined as the binary logarithm
of the smallest positive integer~$k$ for which the space~$X$ can be partitioned
into measurable sets $X_0, \dots, X_k$ in such a~way that the set~$X_0$ has small
measure, $\mu(X_0)<\varepsilon$, and the sets~~$X_j$, $1\leq j \leq k$, have
diameters less than~$\varepsilon$ in the semimetric~$\rho$. If there is no
positive integer~$k$ of this kind, then
$\mathbb{H}_\varepsilon(X,\mu,\rho)=+\infty$.
\end{definition}

\begin{remark}
\label{r14} 
A~measurable semimetic~$\rho$ on~$(X,\mu)$ is admissible if and only if
$\mathbb{H}_\varepsilon(X,\mu,\rho)<+\infty$ \,for every \,$\varepsilon>\nobreak0$.
\end{remark}

\begin{lemma}[{\rm (see~\cite{11} and \cite{13})}]
\label{l10} 
$\varepsilon$-entropy has the following properties.

\vskip1.5pt

{\rm 1)}~The\/ $\varepsilon$-entropy\/ $\mathbb{H}_\varepsilon(X,\mu,\rho)$
decreases with respect to\/~$\varepsilon$ and increases with respect 
to\/~$\rho$.

\vskip1.5pt

{\rm 2)}~If\/ $\rho \in \mathcal{A}\mathrm{dm}(X,\mu)$ and\/ $\int_{X^2}\rho
\,d\mu^2 < \varepsilon^2/2$, then\/ $\mathbb{H}_\varepsilon(X,\mu,\rho)=\nobreak0$.

{\rm 3)}~If\/ $\rho,\rho_1,\rho_2 \,{\in}\, \mathcal{A}\mathrm{dm}(X,\mu)$ and\/
$\rho\,{\leq}\, \rho_1+\rho_2$ almost everywhere with respect~to\/~$\mu^2$,
then
$$
\mathbb{H}_{4\varepsilon}(X,\mu, \rho) \leq \mathbb{H}_\varepsilon(X,\mu,
\rho_1)+\mathbb{H}_\varepsilon(X,\mu, \rho_2).
$$

\vskip1.5pt

{\rm 4)}~If\/ $\rho_1,\rho_2 \in \mathcal{A}\mathrm{dm}(X,\mu)$ and\/
$\|\rho_1-\rho_2\|_m<\varepsilon^2/32$, then
$$
\mathbb{H}_\varepsilon(X,\mu, \rho_1)\leq
\mathbb{H}_{\varepsilon/4}(X,\mu, \rho_2).
$$
\end{lemma}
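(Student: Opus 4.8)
The plan is to establish the four properties in the stated order, deriving the last and most delicate one from the two preceding it. Throughout I write $k_\varepsilon(\rho)$ for the least cardinality admitted in Definition~\ref{d18}, so that $\mathbb{H}_\varepsilon(X,\mu,\rho)=\log_2 k_\varepsilon(\rho)$. Property~1) is immediate from the definition: if $\varepsilon'\le\varepsilon$, any partition witnessing $k_{\varepsilon'}(\rho)$ has exceptional set of measure $<\varepsilon'\le\varepsilon$ and cells of $\rho$-diameter $<\varepsilon'\le\varepsilon$, so it also witnesses $k_\varepsilon(\rho)\le k_{\varepsilon'}(\rho)$, whence $\mathbb{H}_\varepsilon$ decreases in $\varepsilon$; and if $\rho\le\rho'$ almost everywhere then diameters in $\rho$ do not exceed those in $\rho'$, so a partition good for $\rho'$ is good for $\rho$, giving $k_\varepsilon(\rho)\le k_\varepsilon(\rho')$ and monotonicity in $\rho$.

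For property~2) I would use a ball argument. Set $g(x)=\int_X\rho(x,y)\,d\mu(y)$, so that $\int_X g\,d\mu=\int_{X^2}\rho\,d\mu^2<\varepsilon^2/2$; hence the set $\{g<\varepsilon^2/2\}$ has positive measure and I may fix a point $x_0$ in it. Put $X_1=\{y\colon\rho(x_0,y)<\varepsilon/2\}$ and $X_0=X\setminus X_1$. By Markov's inequality $\mu(X_0)\le g(x_0)/(\varepsilon/2)<\varepsilon$, while the triangle inequality gives $\rho(y,y')<\varepsilon$ for all $y,y'\in X_1$, so $\operatorname{diam}_\rho X_1<\varepsilon$. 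Thus $k_\varepsilon(\rho)=1$ and $\mathbb{H}_\varepsilon(\rho)=0$.

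Property~3) I would prove by a common refinement. Choose optimal partitions $X=\bigsqcup_{i=0}^{k_1}X_i^{(1)}$ for $\rho_1$ and $X=\bigsqcup_{j=0}^{k_2}X_j^{(2)}$ for $\rho_2$ at scale $\varepsilon$, where $k_1=k_\varepsilon(\rho_1)$ and $k_2=k_\varepsilon(\rho_2)$. Take the new exceptional set to be $X_0=X_0^{(1)}\cup X_0^{(2)}$, of measure $<2\varepsilon$, and the good cells to be the intersections $X_i^{(1)}\cap X_j^{(2)}$ with $i,j\ge1$. On such a cell the hypothesis $\rho\le\rho_1+\rho_2$ gives essential supremum $<\varepsilon+\varepsilon=2\varepsilon$, so its $\rho$-diameter is $<2\varepsilon$; there are at most $k_1k_2$ such cells. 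This partition witnesses $\mathbb{H}_{2\varepsilon}(\rho)\le\log_2(k_1k_2)=\mathbb{H}_\varepsilon(\rho_1)+\mathbb{H}_\varepsilon(\rho_2)$, and the claimed $4\varepsilon$ version follows from the monotonicity of property~1). Property~4) then reduces to~2) and~3). By Definition~\ref{d17} I choose a measurable semimetric $\sigma$ with $|\rho_1-\rho_2|\le\sigma$ almost everywhere and $\int_{X^2}\sigma\,d\mu^2<\varepsilon^2/32$; in particular $\rho_1\le\rho_2+\sigma$. Taking $\delta=\varepsilon/4$ one has $\int\sigma\,d\mu^2<\varepsilon^2/32=\delta^2/2$, so the ball construction of~2) yields $\mathbb{H}_\delta(\sigma)=0$. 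Re-running the common-refinement construction of~3) for $\rho_1\le\rho_2+\sigma$ at scale $\delta$ then gives
\[
\mathbb{H}_\varepsilon(\rho_1)=\mathbb{H}_{4\delta}(\rho_1)\le\mathbb{H}_\delta(\rho_2)+\mathbb{H}_\delta(\sigma)=\mathbb{H}_{\varepsilon/4}(\rho_2),
\]
as required.

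The main obstacle is the bookkeeping of constants in property~4). Each use of the triangle inequality inflates diameters by a factor of $2$, which is why the scales in~3) differ by $4=2^2$ and why $\delta=\varepsilon/4$ is forced; at the same time, converting the $L^1$-smallness $\int\sigma\,d\mu^2<\varepsilon^2/32$ into a vanishing entropy must go through the quadratic threshold $\delta^2/2$ of property~2), which pins down the constant $1/32$. A secondary subtlety, which I would flag explicitly, is that the semimetric $\sigma$ witnessing the $m$-norm need not be admissible; one therefore checks that the constructions underlying~2) and~3) use only the measurability of $\sigma$ and the combinatorics of partition counts—which they do—rather than invoking those statements as black boxes with their admissibility hypotheses.
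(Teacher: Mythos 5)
The paper itself gives no proof of Lemma~\ref{l10}: it is stated with a reference to~\cite{11} and~\cite{13}, so there is no internal argument to compare yours against. Your proof is correct and is essentially the standard one behind those references: monotonicity straight from the definition; the Markov-plus-triangle-inequality ball argument for part~2); the common refinement for part~3), which in fact yields the sharper constant $2\varepsilon$ with the stated $4\varepsilon$ following by monotonicity; and part~4) by combining 2) and~3) at scale $\delta=\varepsilon/4$ applied to a measurable semimetric $\sigma$ with $|\rho_1-\rho_2|\le\sigma$ and $\int_{X^2}\sigma\,d\mu^2<\varepsilon^2/32$ (such a $\sigma$ exists because the $m$-norm is a strict infimum bound). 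Your explicit check that $\sigma$ need only be measurable, not admissible, for the ball argument and the refinement to apply is exactly the point that has to be verified, and you verify it. The only cosmetic issue is in part~2): with $X_1=\{y\colon\rho(x_0,y)<\varepsilon/2\}$ the diameter is a priori only $\le\varepsilon$ rather than $<\varepsilon$ as Definition~\ref{d18} requires; since $g(x_0)<\varepsilon^2/2$ strictly, replacing $\varepsilon/2$ by any $c$ with $g(x_0)/\varepsilon<c<\varepsilon/2$ repairs this without affecting anything downstream.
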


\subsubsection{Scaling sequences of a~measure-preserving transformation}
\label{sss6.1.2}

\begin{definition}
\label{d19} 
Let $(X,\mu)$ be a~Lebesgue space and $T$ a~measure-preserving
transformation on~$(X,\mu)$. Let $\rho$ be a~measurable semimetric on~$(X,\mu)$.
By the \textit{averaging of the semimetric\/~$\rho$ under the action of\/~$T$
in\/~$n$ steps}, $n \geq 1$, we mean the semimetric
$$
T_{\mathrm{av}}^n \rho= \frac{1}{n}\sum_{j=0}^{n-1} T^j\rho.
$$
\end{definition}

\vskip2pt

In what follows, we use the symbol~$\asymp$ for two sequences of positive
numbers bounding each other with some constant:
$$
a_n \asymp b_n \quad \Longleftrightarrow \quad 0<\liminf_{n \to
\infty}\frac{a_n}{b_n}\leq \limsup_{n \to \infty}\frac{a_n}{b_n}<\infty.
$$

\begin{definition}
\label{d20} 
Let $T$ be a~measure-preserving transformation of a~Lebesgue space
$(X,\mu)$ and $\rho$ an admissible semimetric on~$(X,\mu)$. A~sequence
$h=(h_n)_{n \geq 1}$ of positive numbers is said to be 
\textit{scaling for\/~$\rho$} if the relation
$$
\mathbb{H}_\varepsilon(X,\mu,T_{\mathrm{av}}^n \rho)\asymp h_n,\qquad n \to
\infty,
$$
holds for a~sufficiently small $\varepsilon>\nobreak0$. The class of all scaling
sequences for~$\rho$ is denoted by~$\mathcal{H}(X,\mu,T,\rho)$.
\end{definition}

In~\cite{13} (see also the short communication \cite{12}), the conjecture
formulated by Vershik and claiming that a~scaling sequence does not depend
on the semimetric~$\rho$ was proved in a~wide class of semimetrics.

\begin{definition}
\label{d21} 
A~semimetric~$\rho$ on~$(X,\mu)$ is said to be (two-sided)
\textit{generating} for a~transformation~$T$ if there is a~subset $X_0\subset X$ 
of full measure such that for any two distinct points $x,y \in X_0$ there is an
$n \in \mathbb{Z}$ for which $\rho(T^nx,T^ny)>\nobreak0$.
\end{definition}

Obviously, a~measurable metric is a~generating semimetric.

\begin{theorem}[{\rm (see~\cite{13})}]
\label{t5} 
Let\/ $\rho_1, \rho_2 \in \mathcal{A}\mathrm{dm}(X,\mu)$ be generating
semimetrics for a~measure-\allowbreak preserving transformation\/~$T$. 
Then\/ $\mathcal{H}(X,\mu,T,\rho_1)=\mathcal{H}(X,\mu,T,\rho_2)$.
\end{theorem}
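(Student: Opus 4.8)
The plan is to prove the two inclusions $\mathcal{H}(X,\mu,T,\rho_1)\subseteq \mathcal{H}(X,\mu,T,\rho_2)$ and its reverse; by the symmetry of the roles of $\rho_1$ and $\rho_2$ it is enough to establish one of them, and in fact it suffices to bound the $\varepsilon$-entropies of the averaged semimetrics against each other up to multiplicative constants, bounded shifts of the index $n$, and changes of the scale $\varepsilon$. Concretely, I would show that for a suitable small $\varepsilon'$ depending on $\varepsilon$ one has
$$\mathbb{H}_{\varepsilon}(X,\mu,T^n_{\mathrm{av}}\rho_1)\le \mathbb{H}_{\varepsilon'}(X,\mu,T^{n+c}_{\mathrm{av}}\rho_2)$$
for a fixed constant $c$ and all large $n$, together with the symmetric inequality. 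The whole argument rests on the three structural properties of $\varepsilon$-entropy collected in Lemma~\ref{l10} (monotonicity in $\rho$ and $\varepsilon$, the subadditivity estimate under $\rho\le\rho_1+\rho_2$, and the $m$-norm stability), on the elementary fact that $\mathbb{H}_\varepsilon$ is unchanged when $\rho$ is replaced by a shift $T^k\rho$ (since $T$ preserves $\mu$), and on the rescaling rule that multiplying $\rho$ by a constant $a$ merely replaces $\varepsilon$ by $\varepsilon/a$.

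The key approximation step is the following domination lemma, where the generating hypothesis on $\rho_2$ enters. Writing $\rho_2^{[N]}=\sum_{k=-N}^{N}T^k\rho_2$, I claim that for every $\delta>0$ there exist an integer $N$ and a constant $C>0$ with $\|(\rho_1-C\rho_2^{[N]})_+\|_m<\delta$; that is, $\rho_1\le C\rho_2^{[N]}$ modulo an error arbitrarily small in the $m$-norm. To prove this I would use the admissibility of $\rho_1$ to fix, for given small $\eta$, a partition $X=X_0\cup X_1\cup\dots\cup X_M$ with $\mu(X_0)<\eta$, with each $X_j$ of $\rho_1$-diameter less than $\eta$, and with $\rho_1$ bounded by some $D$ off a set of measure less than $\eta$. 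Off these exceptional sets one has $\rho_1\le \eta+D\,\mathbf{1}[x,y\text{ lie in different pieces}]$. Since the weighted metric $\sum_k 2^{-|k|}T^k\rho_2$ separates almost every pair of distinct points (this is exactly the generating property of Definition~\ref{d21}), after discarding a set of small measure the finitely many pieces $X_1,\dots,X_M$ are separated by $\rho_2^{[N]}$ with a uniform positive gap once $N$ is large enough; hence $\mathbf{1}[\text{different pieces}]\le C'\rho_2^{[N]}$ on the complement, which yields the claim.

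With the domination lemma in hand I would push it through the averaging. Applying $T^n_{\mathrm{av}}$ to $\rho_1\le C\rho_2^{[N]}$ and regrouping the double sum $\frac1n\sum_{j=0}^{n-1}\sum_{|k|\le N}T^{j+k}\rho_2$, each shift $T^m\rho_2$ with $-N\le m\le n+N-1$ occurs at most $2N+1$ times, so $T^n_{\mathrm{av}}\rho_1$ is dominated by a fixed constant multiple (depending only on $N$) of the shift $T^{-N}\!\bigl(T^{n+2N}_{\mathrm{av}}\rho_2\bigr)$, up to an error still small in the $m$-norm. Invoking Lemma~\ref{l10}(3),(4) to absorb the error and the sum, the shift-invariance to drop the $T^{-N}$, and the rescaling rule to remove the constant factor, I obtain the desired entropy inequality with the index shifted by the fixed amount $c=2N$. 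Finally, the $\asymp$-class of $n\mapsto \mathbb{H}_\varepsilon(X,\mu,T^n_{\mathrm{av}}\rho)$ is insensitive to such a bounded shift: one direction follows from $T^{n+c}_{\mathrm{av}}\rho\ge \tfrac{n}{n+c}T^n_{\mathrm{av}}\rho$ and monotonicity, and the other from $T^{n+c}_{\mathrm{av}}\rho\le T^n_{\mathrm{av}}\rho+E_n$ with $\|E_n\|_m\to0$, using parts (2) and (3) of Lemma~\ref{l10}. Combining the inequality with its symmetric counterpart gives $\mathbb{H}_\varepsilon(T^n_{\mathrm{av}}\rho_1)\asymp \mathbb{H}_{\varepsilon'}(T^n_{\mathrm{av}}\rho_2)$, hence the equality of the scaling classes.

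I expect the main obstacle to be the domination lemma. The generating property is a pointwise statement in which the separating index depends on the pair of points, and turning it into the \emph{uniform} bound $\rho_1\le C\rho_2^{[N]}$ on a set of measure close to $1$, with $N$ and $C$ independent of the averaging length $n$, requires a genuine measure-theoretic exhaustion (or compactness in an admissible metric) argument. Controlling the error in the $m$-norm rather than merely in $L^1$, and ensuring that $N$ and $C$ do not secretly depend on $n$, are the delicate points; once this lemma is secured, the remaining steps are routine consequences of Lemma~\ref{l10}.
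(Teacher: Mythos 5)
Your overall architecture is sound and in fact close in spirit to the proof in \cite{13} that the paper merely cites: Theorem~\ref{t5} is not proved in this text, which only records that the argument uses Lemma~\ref{l11} (two admissible metrics induce the same topology on a subset of measure greater than $1-\varepsilon$). The reductions you perform after your domination lemma --- averaging the pointwise inequality, absorbing small-$m$-norm errors and bounded index shifts via parts 2)--4) of Lemma~\ref{l10}, rescaling $\varepsilon$, and using the invariance of $\mathbb{H}_\varepsilon$ under the measure-preserving shift $T^{-N}$ --- are correct and routine, and your $N$, $C$ indeed do not depend on $n$. Where you diverge from the paper's written record is at the crux: its fully detailed treatment of the analogous filtration statement (Theorem~\ref{t8}, via Lemmas~\ref{l13} and~\ref{l14}) proves no direct domination of $\rho_1$ by averaged shifts of $\rho_2$; instead it runs a closure argument on the cone $\mathcal{A}\mathrm{dm}(X,\mu)$ --- the class of semimetrics satisfying the entropy comparison against $\rho$ is downward closed, closed in the $m$-norm, and contains $d[f]$ for every $\rho$-Lipschitz $f$, hence all cut semimetrics, hence (by the $m$-density of cut-decomposable semimetrics from \cite{13}) everything. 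That route never needs a uniform separation of partition pieces.

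Your version of the crux, the claim $\|(\rho_1-C\rho_2^{[N]})_+\|_m<\delta$, is true and completable, but the two points you flag as delicate are exactly where the work lives and your sketch does not yet close them. First, to make ``discarding a set of small measure'' compatible with the $m$-norm you need the bound $\rho_1(x,y)\le h(x)+h(y)$ with $h(x)=\int_X\rho_1(x,z)\,d\mu(z)\in L^1(X,\mu)$: it shows that $\rho_1$ restricted to pairs meeting a small set $Y$ is dominated by a semimetric of the form $(g(x)+g(y))\mathbf{1}[x\ne y]$ whose $L^1$-norm tends to $0$ with $\mu(Y)$; without this (or the cut-approximation lemma of \cite{13}) neither the unbounded part of $\rho_1$ nor the pairs meeting $X_0$ is controlled in the $m$-norm. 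Second, the ``uniform positive gap'' between distinct $\rho_1$-pieces is not a consequence of the pointwise generating property alone: two measurable sets can accumulate on each other in the metric $\widehat\rho_2=\sum_k c_kT^k\rho_2$. You must first replace each piece by a $\widehat\rho_2$-compact subset of almost full measure (inner regularity of $\mu$ with respect to an admissible metric, i.e.\ a Luzin-type argument --- precisely the role played by Lemma~\ref{l11}), after which disjoint compacta are at positive distance; and you should note separately that the tail $\sum_{|k|>N}c_kT^k\rho_2$ is itself a semimetric of small $L^1$-norm, so truncating $\widehat\rho_2$ to the finite window $\rho_2^{[N]}$ costs only another admissible $m$-norm error rather than an error controlled merely on most pairs. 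With these two insertions your domination lemma, and hence the theorem, goes through.
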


The proof of this theorem uses the following lemma.

\begin{lemma}
\label{l11} 
Let\/ $\rho_1,\rho_2 \in \mathcal{A}\mathrm{dm}(X,\mu)$ be two
admissible metrics. Then for any\/ $\varepsilon>\nobreak0$ there is 
a~set\/ $X_0\subset X$ such that\/ $\mu(X_0)>1-\varepsilon$ and the 
semimetrics\/ $\rho_1$ and\/~$\rho_2$ define the same topology on\/~$X_0$.
\end{lemma}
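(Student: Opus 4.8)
The plan is to compare the two topologies directly through the identity map $\mathrm{id}\colon (X,\rho_1)\to (X,\rho_2)$, rather than through any auxiliary embedding into a function space. Since both $\rho_1$ and $\rho_2$ are genuine metrics on a common subset of full measure (Definition~\ref{d16}), each turns that subset into a separable metric space, and the resulting topologies are first countable, hence sequential; so it suffices to produce a set $X_0$ with $\mu(X_0)>1-\varepsilon$ on which a sequence is $\rho_1$-convergent to a given point if and only if it is $\rho_2$-convergent to the same point. I would obtain the two implications separately, by applying Lusin's theorem to $\mathrm{id}$ in each direction, and then intersect the two resulting sets.

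First I would verify that $\mathrm{id}\colon (X,\rho_1)\to (X,\rho_2)$ is Borel measurable between the two metric Borel structures. Because $\rho_2$ is measurable as a function of two variables and separable on a set of full measure, the functions $x\mapsto \rho_2(x,y)$ are $\mu$-measurable for $y$ ranging over a countable $\rho_2$-dense set; hence every $\rho_2$-ball is $\mu$-measurable and the (second countable) $\rho_2$-Borel $\sigma$-algebra is generated by $\mu$-measurable sets, so $\mathrm{id}$ is measurable. The key tool is then the closed-set form of Lusin's theorem: for a finite Borel measure $\mu$ on a separable metric space $(X,\rho_1)$ and a Borel map into a separable metric space $(X,\rho_2)$, for every $\delta>0$ there is a $\rho_1$-closed set $E$ with $\mu(X\setminus E)<\delta$ on which the map is relatively continuous. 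I would prove this in the standard way: cover the target by countably many Borel pieces of $\rho_2$-diameter $<1/k$, approximate their preimages from inside by finitely many disjoint $\rho_1$-closed sets up to measure $\delta 2^{-k}$, and take the intersection over $k$; relative continuity on $E$ then follows because finitely many disjoint closed sets in a metric space are separated.

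Applying this to $\mathrm{id}\colon (X,\rho_1)\to (X,\rho_2)$ with $\delta=\varepsilon/2$ yields a set $E_1$ with $\mu(E_1)>1-\varepsilon/2$ on which $\rho_1$-convergence implies $\rho_2$-convergence to the same limit; applying it to $\mathrm{id}\colon (X,\rho_2)\to (X,\rho_1)$ yields $E_2$ with $\mu(E_2)>1-\varepsilon/2$ on which $\rho_2$-convergence implies $\rho_1$-convergence. Setting $X_0=E_1\cap E_2$, so that $\mu(X_0)>1-\varepsilon$, any sequence in $X_0\subseteq E_1$ that $\rho_1$-converges also $\rho_2$-converges, and any sequence in $X_0\subseteq E_2$ that $\rho_2$-converges also $\rho_1$-converges. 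Thus the two metrics have exactly the same convergent sequences on $X_0$, and therefore define the same topology there.

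The main obstacle is obtaining Lusin's theorem in precisely this form \emph{without any compactness}: an admissible metric need not be complete, so $\mu$ need not be tight, and in general one cannot find $\rho_1$-compact sets of large measure, which rules out the usual compact-set statement and the attractive shortcut of declaring a continuous bijection from a compact domain to a Hausdorff target a homeomorphism. The remedy is that inner regularity by \emph{closed} sets (rather than by compact sets) holds automatically for every finite Borel measure on a metric space, and that disjointness of finitely many closed sets already supplies the separation needed for relative continuity. I would take care to formulate the relative continuity on each $E_i$ through sequences lying in $E_i$, so that it is inherited by the subset $X_0\subseteq E_i$; this inheritance is exactly what makes the final intersection argument valid.
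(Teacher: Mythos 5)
The paper itself contains no proof of Lemma~\ref{l11}: it is only stated as an ingredient of Theorem~\ref{t5}, which is quoted from~\cite{13}, so there is no in-paper argument to set yours against. Judged on its own, your proof is correct in outline and is the natural one: two metrizable topologies coincide iff they have the same convergent sequences, and a two-sided application of the closed-set form of Lusin's theorem to the identity map, followed by intersecting the two sets, does exactly what is needed; your insistence on inner regularity by closed (rather than compact) sets is the right way to avoid any completeness or tightness hypothesis. Two steps need more care than you give them. First, Fubini yields $\mu$-measurability of $x\mapsto\rho_2(x,y)$ only for $\mu$-almost every $y$, so the countable $\rho_2$-dense set must be chosen inside the intersection of that full-measure set with the full-measure set on which $\rho_2$ is separable (possible, since every subset of a separable semimetric space is separable). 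Second, and more substantively, the preimages you approximate from inside by $\rho_1$-closed sets are $\rho_2$-Borel, hence $\mu$-measurable, but not a priori $\rho_1$-Borel, whereas inner regularity by closed sets is automatic only on the Borel $\sigma$-algebra of $(X,\rho_1)$. You must either invoke the fact (from~\cite{11}) that an admissible \emph{metric} on a Lebesgue space generates the full measure algebra $\operatorname{mod}0$, or replace each preimage by a $\rho_1$-Borel set equal to it $\operatorname{mod}0$ and discard the resulting null sets --- which is harmless precisely because, as you correctly observe, the lemma does not require $X_0$ to be closed. With these two repairs the argument is complete.
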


Theorem~\ref{t5} means that the class of scaling sequences is a~characteristic
of the transformation~$T$.

\begin{definition}
\label{d22} 
A~sequence~$h$ is said to be a~\textit{scaling entropy sequence}
of a~measure-\allowbreak preserving transformation~$T$ of a~Lebesgue 
space $(X,\mu)$ if~$h \in \mathcal{H}(X,\mu,T,\rho)$ 
for some (and then for every) generating semimetric
$\rho \in \mathcal{A}\mathrm{dm}(X,\mu)$. We denote the class of scaling
entropy sequences of~$T$ by~$\mathcal{H}(X,\mu,T)$.
\end{definition}

The class of scaling entropy sequences is obviously a~metric invariant.

\vskip1pt

It was proved in~\cite{20} that if the class of scaling sequences is non-empty,
then it contains an increasing subadditive sequence. It was proved
in~\cite{27} that there~is a~dynamical system with a~prescribed scaling entropy
sequence if this sequence is subadditive and increases. Thus, a~characteristic
of all possible scaling entropy sequences of measure-\allowbreak preserving transformations
was given. Examples of dynamical systems with a~given growth of scaling
sequences were given by the adic transformation~$\mathcal{A}$ of the path space
$\mathcal{T}(\mathrm{OP})$ of the graph~$\mathrm{OP}$ of ordered pairs with
measures~$\mu^\sigma$ for different sequences~$\sigma$. Namely, the following
theorem was proved.

\begin{theorem}[{\rm (see~\cite{27})}]
\label{t6} 
The sequence\/ $h_n = 2^{\sum_{i=0}^{n-1}\sigma_i}$, $n \geq 1$, is
a~scaling sequence of the adic transformation\/~$\mathcal{A}$ of the space\/
$(\mathcal{T}(\mathrm{OP}),\mu^\sigma)$.
\end{theorem}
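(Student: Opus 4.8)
The plan is to reduce the computation of the scaling sequence of the adic transformation $\mathcal{A}$ on $(\mathcal{T}(\mathrm{OP}),\mu^\sigma)$ to a concrete estimate of the $\varepsilon$-entropy of an averaged metric. By Theorem~\ref{t5}, the class $\mathcal{H}(\mathcal{T}(\mathrm{OP}),\mu^\sigma,\mathcal{A})$ is independent of the choice of generating admissible metric, so I am free to pick the most convenient one. First I would transport the whole problem through the isomorphism $\Lambda\circ\Psi$ of Theorem~\ref{t1} and Proposition~\ref{p1}, which carries $\mathcal{A}$ to the product transformation $\mathbb{S}\times\mathbb{O}$ on $I^{\mathbb{Z}}\times I^{\mathbb{N}}$ and carries $\mu^\sigma$ to the measure $\omega^\sigma=m^\sigma\times m$ of Definition~\ref{d2}. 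The advantage is that $\mathbb{S}\times\mathbb{O}$ acts on the second coordinate simply as the odometer, which after $n$ steps cyclically permutes the $2^n$ cylinders $A_{r,n}$; the averaged metric therefore decomposes naturally according to these blocks.

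Next I would fix a generating admissible metric that is a weighted Hamming-type (cut) metric on the first coordinate $I^{\mathbb{Z}}$, say $\rho(w,w')=\sum_{j} 2^{-|j|}|w(j)-w'(j)|$, pulled back to $I^{\mathbb{Z}}\times I^{\mathbb{N}}$, plus a comparable admissible piece on the $I^{\mathbb{N}}$ factor to make it two-sided generating. The iterated metric $(\mathbb{S}\times\mathbb{O})^n_{\mathrm{av}}\rho$ then compares two points by averaging coordinate differences along the $\mathbb{S}$-shifted window of length $n$. The key point is that the support of $m^\sigma$ consists, by Definition~\ref{d2}, of configurations constant on the cosets of $\mathcal{D}^\sigma$, so the number of genuinely distinct coordinate patterns one sees in a window of length $2^n$ is governed not by $2^n$ but by the number of free generators $g_i$ with $\sigma_i=1$ among $i<n$, which is exactly $\sum_{i=0}^{n-1}\sigma_i$. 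This is the mechanism that produces the factor $2^{\sum_{i=0}^{n-1}\sigma_i}$: the effective number of $\varepsilon$-distinguishable clusters in the averaged metric is of that order.

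The main technical work, and what I expect to be the principal obstacle, is the two-sided entropy estimate $\mathbb{H}_\varepsilon(X,\mu,(\mathbb{S}\times\mathbb{O})^n_{\mathrm{av}}\rho)\asymp 2^{\sum_{i=0}^{n-1}\sigma_i}$ for small fixed $\varepsilon$. For the upper bound I would exhibit an explicit partition: group points according to which coset of $\mathcal{D}^\sigma$ (restricted to the relevant finite level) their configuration represents, discard a set of measure below $\varepsilon$ where the empirical frequencies deviate from their averages, and use property~3) of Lemma~\ref{l10} together with the subadditivity of $\varepsilon$-entropy to control the averaged metric by a sum over the free levels; this yields at most $C\cdot 2^{\sum_{i<n}\sigma_i}$ sets of small diameter. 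The lower bound is the delicate direction: I must produce, on a set of measure bounded below, that many points which remain pairwise $\varepsilon$-separated in the averaged metric. Here I would invoke a law-of-large-numbers / concentration argument showing that for $\omega^\sigma$-typical pairs lying in distinct cosets the averaged coordinate discrepancy stays bounded away from zero uniformly in $n$, so no small-diameter cell can contain two of them; the counting then forces at least $c\cdot 2^{\sum_{i<n}\sigma_i}$ cells. Assembling the matching bounds and using Lemma~\ref{l10}(4) to absorb the auxiliary $I^{\mathbb{N}}$-metric contribution (whose $m$-norm is negligible after averaging) gives the claimed $\asymp$, and hence by Definition~\ref{d20} the sequence $h_n=2^{\sum_{i=0}^{n-1}\sigma_i}$ is scaling for $\mathcal{A}$, proving Theorem~\ref{t6}.
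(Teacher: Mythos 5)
The paper itself does not prove Theorem~\ref{t6}; it quotes the result from~\cite{27}, and the closest arguments actually given here are Lemma~\ref{l15} with Corollary~\ref{c7} (the same asymptotics for the action of~$\mathcal{D}$, obtained by splitting off the odometer factor via its invariant metric and reducing to a Hamming cube of dimension $2^{\sum_{i<n}\sigma_i}$) and Theorem~\ref{t9} with Lemma~\ref{l17} (the filtration version). Your overall plan --- pass to $\mathbb{S}\times\mathbb{O}$, absorb the odometer coordinate, and estimate the $\varepsilon$-entropy of an averaged cut metric by counting coset patterns --- is exactly the philosophy of those arguments, so the strategy is sound; but as written the proposal has two genuine gaps.

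First, the measure you work with on $I^{\mathbb{Z}}\times I^{\mathbb{N}}$ is misidentified. The map $\Psi$ carries $\mu^\sigma$ to $\omega^\sigma=m^\sigma\times m$ on $I^{\mathcal{D}}\times I^{\mathbb{N}}$, but the further map $\Lambda$ to $I^{\mathbb{Z}}\times I^{\mathbb{N}}$ does \emph{not} produce a product measure: by Remark~\ref{r5} the change of variables $\lambda_\alpha$ depends on $\alpha$, so the partition of $\mathbb{Z}$ into classes on which the configuration is forced to be constant (the preimages of the cosets of $\mathcal{D}^\sigma$) moves with $\alpha$. Your central counting step --- that a window sees only $2^{\sum_{i<n}\sigma_i}$ free values --- is correct for each fixed $\alpha$ once the window is the interval $\lambda_\alpha^{-1}(\mathcal{D}_n)$, but the estimate must be run conditionally on $\alpha$ (or, cleaner, carried out for the $\mathcal{D}$-action on $I^{\mathcal{D}}\times I^{\mathbb{N}}$, where the averaging set $\mathcal{D}_n$ is automatically aligned with the cosets, and then transferred to $\mathbb{Z}$). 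Second, the lower bound as you state it is false: two configurations ``lying in distinct cosets'' may differ in only one of the $2^{\sum_{i<n}\sigma_i}$ classes and hence on only a $2^{-\sum_{i<n}\sigma_i}$ fraction of the coordinates, which tends to zero, so they are \emph{not} $\varepsilon$-separated in the averaged metric. What one actually needs is that independent points of the cube $I^{2^{\sum_{i<n}\sigma_i}}$ disagree in a positive fraction of the classes with overwhelming probability, so that every $\varepsilon$-ball has measure at most $2^{-c\,2^{\sum_{i<n}\sigma_i}}$ --- i.e.\ the standard $\varepsilon$-entropy bound for the Hamming cube invoked in Corollary~\ref{c7} and Lemma~\ref{l17}, not a separation statement for arbitrary pairs in distinct cosets. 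Finally, watch the indexing: an orbit segment of $\mathcal{A}$ of length $n$ sees only about $\log_2 n$ floors of the graph, so the two-sided bound must be organized over windows of length $2^n$ rather than $n$ (your own text slides between the two), and the displayed estimate $\mathbb{H}_\varepsilon\bigl(X,\mu,(\mathbb{S}\times\mathbb{O})^n_{\mathrm{av}}\rho\bigr)\asymp 2^{\sum_{i=0}^{n-1}\sigma_i}$ cannot hold literally, as the case $\sigma\equiv 1$ (a positive-entropy system, where the left-hand side grows linearly in the number of iterates) shows.
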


\subsubsection{Scaling sequences of the action of a~group}
\label{sss6.1.3}
By analogy with the definition for~a~single measure-\allowbreak preserving
transformation, we can introduce the definition~of a~scaling sequence of the
action of a~group~$G$. To do this, it is necessary to distinguish some equipment 
in~$G$, namely, a~family of subsets~$G_n$, $n \in \mathbb{N}$, over which
the averaging is to be carried out.

\begin{definition}
\label{d23} 
Let $G$ be a~group of automorphisms of a~Lebesgue space $(X,\mu)$
with equipment $G_n\subset G$, $n \geq 1$, and let $\rho$ be an admissible
semimetric on~$(X,\mu)$. A~sequence $h=(h_n)_{n \geq 1}$ of positive numbers is
called a~\textit{scaling sequence for~$\rho$}~\,if the~relation
$$
\mathbb{H}_\varepsilon\bigl(X,\mu,T_{\mathrm{av}}^{G_n} \rho\bigr) \asymp
h_n, \qquad n \to \infty,
$$
holds for a~sufficiently small~$\varepsilon>\nobreak0$, where $T_{\mathrm{av}}^{G_n}
\rho = \frac{1}{|G_n|}\sum_{g \in G_n} \rho(g \, \cdot\,)$ is the averaging
of~$\rho$ with respect to the shifts in~$G_n$.

We denote the class of all scaling sequences for a~semimetric~$\rho$ with
respect to the action of~$G$ by~$\mathcal{H}(X,\mu,G,\rho)$.
\end{definition}

As in the case of a~single transformation, the scaling sequences of an action
of an equipped group~$G$ do not depend on the choice of the original metric.

\begin{theorem}[{\rm (see~\cite{27})}]
\label{t7}
If\/~$\rho_1, \rho_2 \in \mathcal{A}\mathrm{dm}(X,\mu)$ are metrics, then
$$
\mathcal{H}(X,\mu,G,\rho_1)=\mathcal{H}(X,\mu,G,\rho_2).
$$
\end{theorem}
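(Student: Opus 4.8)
The plan is to mimic the proof of Theorem~\ref{t5}, since the only facts about the time averages $T_{\mathrm{av}}^n$ used there are three structural properties that the group average $T_{\mathrm{av}}^{G_n}\rho=\frac1{|G_n|}\sum_{g\in G_n}\rho(g\,\cdot\,,g\,\cdot\,)$ also enjoys. First I would record these. (i) Monotonicity: if $\rho\le\rho'$ almost everywhere with respect to $\mu^2$, then $T_{\mathrm{av}}^{G_n}\rho\le T_{\mathrm{av}}^{G_n}\rho'$. (ii) An average of semimetrics is again a semimetric, so $T_{\mathrm{av}}^{G_n}$ maps $\mathcal{A}\mathrm{dm}(X,\mu)$ into itself (a finite average of admissible semimetrics has finite $\varepsilon$-entropy and hence is admissible by Remark~\ref{r14}). (iii) $T_{\mathrm{av}}^{G_n}$ does not increase the $m$-norm: since every $g\in G_n$ preserves $\mu$, the map $g\times g$ preserves $\mu^2$ and carries measurable semimetrics to measurable semimetrics, whence $\|\rho(g\,\cdot\,,g\,\cdot\,)\|_m=\|\rho\|_m$; subadditivity and homogeneity of $\|\cdot\|_m$ then give $\|T_{\mathrm{av}}^{G_n}f\|_m\le\|f\|_m$ for every $f$, uniformly in $n$. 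Property (iii) is what makes the equipment $\{G_n\}$ irrelevant: no F\o lner or other structural assumption on the sets $G_n$ is needed.

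Next I would establish a single, $n$-independent comparison of the two metrics, exactly as for a single transformation. By Lemma~\ref{l11}, together with inner regularity of $\mu$, there is for any $\eta>0$ a compact set $K$ with $\mu(K)>1-\eta$ on which $\rho_1$ and $\rho_2$ induce the same compact topology; the identity of $K$ is then uniformly continuous in both directions, so $\rho_2$ is bounded on $K\times K$, say by $M$, and for every $\beta>0$ there is $\gamma>0$ with $\rho_2(x,y)\le(M/\gamma)\rho_1(x,y)+\beta$ whenever $x,y\in K$. Writing $B=X\setminus K$ and bounding $\rho_2$ crudely on the complement by the semimetric equal to $2M$ as soon as one argument lies in $B$, I obtain a pointwise estimate $\rho_2\le(M/\gamma)\rho_1+E$ valid $\mu^2$-a.e., where $E$ is a semimetric with $\|E\|_m\le\beta+CM\eta$; choosing $\beta$ and $\eta$ small makes $\|E\|_m$ as small as desired. (The reduction permitting one to treat $\rho_2$ as bounded on $K$ rather than globally, i.e. the fact that $\|\rho-\rho\wedge M\|_m\to0$ for admissible $\rho$, is the same truncation already used in Theorem~\ref{t5}.)

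Applying $T_{\mathrm{av}}^{G_n}$ and invoking (i)--(iii), I get $T_{\mathrm{av}}^{G_n}\rho_2\le(M/\gamma)\,T_{\mathrm{av}}^{G_n}\rho_1+T_{\mathrm{av}}^{G_n}E$ with $\|T_{\mathrm{av}}^{G_n}E\|_m\le\|E\|_m$ bounded uniformly in $n$. Fixing a small $\varepsilon$ and choosing the error below $\varepsilon^2/32$, monotonicity and Lemma~\ref{l10}(4) give $\mathbb{H}_\varepsilon(X,\mu,T_{\mathrm{av}}^{G_n}\rho_2)\le\mathbb{H}_{\varepsilon/4}\bigl(X,\mu,(M/\gamma)T_{\mathrm{av}}^{G_n}\rho_1\bigr)$; and since rescaling by $C=M/\gamma\ge1$ satisfies $\mathbb{H}_\delta(X,\mu,C\sigma)\le\mathbb{H}_{\delta/C}(X,\mu,\sigma)$ (a partition optimal at scale $\delta/C$ for $\sigma$ is admissible at scale $\delta$ for $C\sigma$), I conclude $\mathbb{H}_\varepsilon(X,\mu,T_{\mathrm{av}}^{G_n}\rho_2)\le\mathbb{H}_{\varepsilon_1}(X,\mu,T_{\mathrm{av}}^{G_n}\rho_1)$ with $\varepsilon_1=\varepsilon\gamma/(4M)$ independent of $n$. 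By symmetry the reverse inequality holds with $\rho_1,\rho_2$ exchanged and a corresponding $\varepsilon_2$. As $\varepsilon_1,\varepsilon_2\to0$ when $\varepsilon\to0$, reading these two bounds against Definition~\ref{d23} shows that a sequence which is $\asymp\mathbb{H}_\varepsilon(T_{\mathrm{av}}^{G_n}\rho_1)$ for all small $\varepsilon$ is also $\asymp\mathbb{H}_\varepsilon(T_{\mathrm{av}}^{G_n}\rho_2)$, and conversely, that is, $\mathcal{H}(X,\mu,G,\rho_1)=\mathcal{H}(X,\mu,G,\rho_2)$.

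The step I expect to require the most care is the uniformity of the comparison in $n$, and this is precisely where property (iii) is essential: the pointwise estimate $\rho_2\le(M/\gamma)\rho_1+E$ is produced once, before averaging, and (iii) guarantees that its $m$-norm error survives the averaging with an $n$-independent bound, so that Lemma~\ref{l10}(4) may be applied with the same $\varepsilon$-thresholds for all $n$ simultaneously. A secondary technical point, handled exactly as in Theorem~\ref{t5}, is the passage from the metric bounded on the compact set $K$ to the (possibly unbounded) global metric, i.e. controlling $\|\rho-\rho\wedge M\|_m$ together with the contribution of the small-measure complement $B$.
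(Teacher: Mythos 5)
Your argument is essentially correct, but it is worth noting that the paper itself does not prove Theorem~\ref{t7}: the statement is imported from~\cite{27}, and the only proof of this type actually carried out in the text is the filtration analogue, Theorem~\ref{t8}, via Lemmas~\ref{l13} and~\ref{l14}. Measured against that in-paper model, your route is genuinely different. The paper's strategy is a closure argument in the $m$-norm: one shows that the set $\mathcal{M}_\rho$ of semimetrics comparable to $\rho$ is downward closed and $m$-closed, contains the cut semimetrics $d[f]$ for $\rho$-Lipschitz $f$ (because the iteration/averaging is monotone and homogeneous), and then exhausts all of $\mathcal{A}\mathrm{dm}(X,\mu)$ by $m$-density of finite sums of cuts. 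You instead produce, once and for all, a pointwise domination $\rho_2\leq C\rho_1+E$ with $\|E\|_m$ arbitrarily small, by combining Lemma~\ref{l11} with a compactness and uniform-continuity argument on a subset of almost full measure, and then push this single inequality through the averaging using exactly the three structural properties (monotonicity, homogeneity, non-expansiveness of $T_{\mathrm{av}}^{G_n}$ in the $m$-norm) that also drive the paper's argument; the concluding application of parts~1) and~4) of Lemma~\ref{l10} and the bookkeeping with $\varepsilon_1=\varepsilon\gamma/(4M)$ are correct. Your observation that no F\o lner-type hypothesis on the equipment $\{G_n\}$ is needed, only measure preservation of each $g$, is accurate and is precisely why the metric case goes through for arbitrary equipped groups. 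What your approach buys is a shorter, more transparent proof for \emph{metrics}; what the paper's $m$-closure approach buys is that it never needs compactness or uniform continuity and therefore extends more readily to generating \emph{semimetrics} (cf.\ Theorem~\ref{t5} and the remark after Theorem~\ref{t7} that this extension requires extra conditions on $G_n$).

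Two technical inputs in your argument deserve to be made explicit, since they carry real weight. First, Lemma~\ref{l11} only gives a set of large measure on which the two topologies agree; to get a set that is moreover \emph{compact} in that common topology (so that boundedness of $\rho_2$ and uniform continuity of the identity follow) you need the regularity theory of admissible metrics from~\cite{11} (compact subsets of almost full measure exist for any admissible metric); mere total boundedness, which is what finiteness of the $\varepsilon$-entropy gives directly, would not suffice for uniform continuity. Second, the truncation estimate $\|\rho-\rho\wedge M\|_m\to 0$ and the bound on the error term supported on $(X\setminus K)\times X\cup X\times(X\setminus K)$ are genuine lemmas about the $m$-norm (note that $(\rho-M)_+$ is not itself a semimetric, so one must exhibit a dominating semimetric of small $L^1$-norm, e.g.\ of the form $c(\chi_B(x)+\chi_B(y))$ plus a bounded truncation of $\rho$). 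Both facts are available in~\cite{11} and~\cite{13}, and you cite them appropriately, so this is not a gap, but they are the places where the proof actually uses admissibility rather than mere measurability.
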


To replace the metrics in this theorem by generating semimetrics, it is
necessary to impose some conditions on the sequence of sets~$G_n$
(see~\cite{27}). As in the case of a~single transformation, we introduce the
notion of a~\textit{scaling sequence of an action of an equipped group}; this is
a~scaling sequence of an arbitrary admissible metric. The class of these
sequences is a~metric invariant of the action of the equipped group~$G$ and is
denoted by~$\mathcal{H}(X,\mu,G)$.

The following assertion enables us to prove that some number sequence is
scaling by testing this on a~suitable sequence of semimetrics which need not be
generating.

\begin{lemma}
\label{l12}
Let a~sequence of semimetrics\/ $\rho_k\,{\in}\,\mathcal{A}\mathrm{dm}(X,\mu)$,
$k\,{\geq}\, 1$, together separate the points of a~subset of full measure. 
Let a~sequence\/~$h$ of positive numbers be such that\/ $h \in
\mathcal{H}(X,\mu,G,\rho_k)$, $k \geq 1$. Then\/ $h \in \mathcal{H}(X,\mu,G)$.
\end{lemma}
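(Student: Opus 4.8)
The plan is to build from the countable family $(\rho_k)_{k\geq1}$ a single generating admissible metric $\rho$ that has $h$ as a scaling sequence; then $h\in\mathcal{H}(X,\mu,G,\rho)=\mathcal{H}(X,\mu,G)$ by Theorem~\ref{t7} and the definition of $\mathcal{H}(X,\mu,G)$ as the scaling class of an arbitrary admissible metric. Concretely, I would choose weights $c_k>0$ with $\sum_{k\geq1}c_k\bigl(1+\|\rho_k\|_{L^1(X^2,\mu^2)}\bigr)<\infty$ and set $\rho=\sum_{k\geq1}c_k\rho_k$. Since the $\rho_k$ separate the points of a full-measure set $X_0$ and every $c_k>0$, for distinct $x,y\in X_0$ some $\rho_k(x,y)>0$, so $\rho(x,y)\geq c_k\rho_k(x,y)>0$; thus $\rho$ is an honest metric on $X_0$ and, in particular, a generating semimetric for the action.

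Next I would check $\rho\in\mathcal{A}\mathrm{dm}(X,\mu)$: it is measurable, it is summable by the choice of the $c_k$, and it is separable on $X_0$ (intersect the countably many full-measure sets on which the $\rho_k$ are separable and use that a countable weighted sum of separable semimetrics is separable), so $\rho$ is admissible by Definition~\ref{d16} (equivalently by Remark~\ref{r14}). For the same reasons the finite heads $\rho^{(N)}=\sum_{k\leq N}c_k\rho_k$, the tails $\rho^{(>N)}=\sum_{k>N}c_k\rho_k$, and all their averages $T_{\mathrm{av}}^{G_n}(\,\cdot\,)$ again lie in $\mathcal{A}\mathrm{dm}(X,\mu)$, so Lemma~\ref{l10} will apply to every semimetric occurring below. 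I also record that averaging preserves the $L^1(\mu^2)$-norm, so that $\int_{X^2}T_{\mathrm{av}}^{G_n}\rho^{(>N)}\,d\mu^2=\int_{X^2}\rho^{(>N)}\,d\mu^2$ for all $n$.

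It remains to prove $\mathbb{H}_\varepsilon(X,\mu,T_{\mathrm{av}}^{G_n}\rho)\asymp h_n$ for all sufficiently small $\varepsilon$. The lower bound is immediate: $\rho\geq c_1\rho_1$ gives $T_{\mathrm{av}}^{G_n}\rho\geq c_1T_{\mathrm{av}}^{G_n}\rho_1$, so by the monotonicity in part~1) of Lemma~\ref{l10},
$$
\mathbb{H}_\varepsilon(X,\mu,T_{\mathrm{av}}^{G_n}\rho)\geq\mathbb{H}_{\varepsilon/c_1}(X,\mu,T_{\mathrm{av}}^{G_n}\rho_1)\asymp h_n,
$$
the last relation holding because $h\in\mathcal{H}(X,\mu,G,\rho_1)$. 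For the upper bound, fix a small $\varepsilon$ and choose $N=N(\varepsilon)$ with $\int_{X^2}\rho^{(>N)}\,d\mu^2<\varepsilon^2/2$. Then part~2) of Lemma~\ref{l10} gives $\mathbb{H}_\varepsilon(X,\mu,T_{\mathrm{av}}^{G_n}\rho^{(>N)})=0$ for every $n$, and one application of part~3) peels off the tail, while iterating part~3) across the finite head yields
$$
\mathbb{H}_{4\varepsilon}(X,\mu,T_{\mathrm{av}}^{G_n}\rho)\leq\sum_{k=1}^{N}\mathbb{H}_{\varepsilon/(4^{N-1}c_k)}(X,\mu,T_{\mathrm{av}}^{G_n}\rho_k).
$$
Each summand is $\leq C_k h_n$ for large $n$: at the scale $\varepsilon/(4^{N-1}c_k)$ the term is either already $\asymp h_n$ or, if that scale exceeds the threshold from $h\in\mathcal{H}(X,\mu,G,\rho_k)$, is bounded by monotonicity by its value just below that threshold, which is $\asymp h_n$. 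Summing the $N$ terms gives $\mathbb{H}_{4\varepsilon}(X,\mu,T_{\mathrm{av}}^{G_n}\rho)\leq C_{(\varepsilon)}h_n$, and together with the lower bound this yields $h\in\mathcal{H}(X,\mu,G,\rho)$, whence $h\in\mathcal{H}(X,\mu,G)$.

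The main obstacle is the upper bound, where the infinite sum must be handled uniformly in $n$: one has to pick $N$ depending only on $\varepsilon$ and keep $\int_{X^2}T_{\mathrm{av}}^{G_n}\rho^{(>N)}\,d\mu^2$ below $\varepsilon^2/2$ for all $n$ at once, which is exactly what measure-preservation of the averaging secures, and then absorb the factor $4^{N-1}$ lost through the iterated quasi-triangle inequality of part~3) of Lemma~\ref{l10}. The reason this causes no harm is that $N$ is fixed once $\varepsilon$ is fixed, so the finitely many resulting constants may all depend on $\varepsilon$, which is all that the definition of a scaling sequence permits.
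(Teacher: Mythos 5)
Your proposal is correct and follows essentially the same route as the paper: the paper's proof also forms $\rho=\sum c_k\rho_k$ with small positive weights, notes that it is an admissible metric, and invokes Lemma~\ref{l10} to conclude $h\in\mathcal{H}(X,\mu,G,\rho)$. You have simply filled in the details that the paper compresses into ``using Lemma~\ref{l10}, we can readily see,'' namely the lower bound by monotonicity and the upper bound by splitting off an $L^1$-small tail and iterating part~3) of that lemma.
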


\begin{proof}
One can choose a~sequence of sufficiently small positive numbers~$c_k$, 
$k \geq\nobreak 1$, in such a~way that the function $\rho \,{=} \sum c_k \rho_k$ is
in~$\mathcal{A}\mathrm{dm}(X,\mu)$. Here~$\rho$ is an admissible metric. Using
Lemma~\ref{l10}, we can readily see that $h \in \mathcal{H}(X,\mu,G,\rho)$, and
therefore $h \in \mathcal{H}(X,\mu,G)$. 
\end{proof}

\subsubsection{Scaling sequences of a~filtration}
\label{sss6.1.4}
The notion of the scaled
entropy of a~filtration can be found in~\cite{7} and~\cite{8}, and also
in the doctoral thesis of the first author (see also~\cite{5}, \cite{10},
\cite{21} and~\cite{29}). We recall one of the possible definitions
of a~scaling sequence of a~filtration.

Let $\varsigma = (\varsigma_n)_{n\geq 0}$ be a~dyadic filtration on a~Lebesgue
space $(X,\mu)$, where $\varsigma_0$ is the partition into points. Every
element of the partition $\varsigma_n$, $n \geq 0$, is naturally equipped with
a~dyadic hierarchy, namely, this element consists of two elements of the
partition $\varsigma_{n-1}$, each of which also consists of two elements of the
partition $\varsigma_{n-2}$, and so on up to the partitioning into points. The
group $\mathcal{T}_n$ of automorphisms of the binary tree of height~$n$ (this
tree has $n+1$ floors of vertices and $n$ floors of edges) acts on the points
of every element of the partition $\varsigma_n$, and preserves the hierarchy 
(that~is, the elements of previous partitions).

Let $\rho$ be a~semimetric on~$(X,\mu)$. For every $n \geq 0$ we construct
a~semimetric $\mathcal{K}_n=\mathcal{K}_n[\rho]$ on the set of elements of the
partition~$\varsigma_n$ as follows. Let $c_1,c_2$ be two elements of the
partition~$\varsigma_n$, \,$c_i=\{x_{i,j}\colon j=1,\dots, 2^n\}$, \,$i=1,2$. 
We write
\begin{equation}
\label{eq22}
\mathcal{K}_n[\rho](c_1,c_2) = \inf_{S \in
\mathcal{T}_n}\frac{1}{2^n}\sum_{j=1}^{2^n} \rho(x_{1,j}, S x_{2,j}).
\end{equation}
We note that the sequence $\mathcal{K}_n[\rho]$, $n \geq 0$, of semimetrics can
be constructed iteratively as follows. Let $\mathcal{K}_0[\rho]=\rho$ be the
semimetric on~$X = X/ \varsigma_0$. When $n \geq 0$ every point of the set $X
/\varsigma_{n+1}$ is an unordered pair of points in~$X /\varsigma_{n}$, and we can
assign to this pair the semi-sum of the delta measures at the points. Thus, the
set $X /\varsigma_{n+1}$ is embedded in the space of measures on~$X
/\varsigma_{n}$. There is a~Kantorovich metric on the space of measures on the
metric space $(X/\varsigma_n, \mathcal{K}_n[\rho])$, and it is this metric
that defines the metric $\mathcal{K}_{n+1}[\rho]$ on~$X /\varsigma_{n+1}$.

\vskip1pt

We note that the semimetric $\mathcal{K}_n[\rho]$ defined on the quotient space
$(X/\varsigma_n, \mu/\varsigma_n)$ can also be treated as a~semimetric on the
original space $(X,\mu)$ (one need only take its composite with the quotient
map). In this case, the resulting semimetric triples are isomorphic, and the
map preserving the measure and the semimetric is the quotient map.

\begin{definition}
\label{d24} 
A~sequence of positive numbers~$h_n$, $n \geq 1$, is called
a~\textit{scaling sequence for a~semimetric~$\rho$ and a~dyadic filtration}
$\varsigma = (\varsigma_n)_{n \geq 0}$ of the space $(X,\mu)$ if the following
asymptotic relation holds for sufficiently small $\varepsilon>\nobreak0$:
$$
\mathbb{H}_\varepsilon(X/\varsigma_n, \mu/\varsigma_n,
\mathcal{K}_n[\rho]) \asymp
h_n, \qquad n \to \infty.
$$
The class of scaling sequences for a~semimetric~$\rho$ and
a~filtration~$\varsigma$ is denoted by~$\mathcal{H}(X,\mu,\varsigma,\rho)$.
\end{definition}

As in the case of the entropy of an action, it turns out that the class
of scaling sequences does not depend on the metric.

\begin{theorem}
\label{t8} 
Let\/ $\varsigma = (\varsigma_n)_{n \geq 0}$ be a~dyadic filtration
on a~Lebesgue space\/ $(X,\mu)$ and~let\/ $\rho_1,\rho_2 \in
\mathcal{A}\mathrm{dm}(X,\mu)$ be two metrics on~$(X,\mu)$. Then
$$
\mathcal{H}(X,\mu,\varsigma,\rho_1)=\mathcal{H}(X,\mu,\varsigma,\rho_2).
$$
\end{theorem}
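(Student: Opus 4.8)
The plan is to reduce Theorem~\ref{t8} to the already-established independence results for the $\varepsilon$-entropy from Lemma~\ref{l10}, mirroring the argument used for actions (Theorems~\ref{t5} and~\ref{t7}). The key object is the iterated semimetric $\mathcal{K}_n[\rho]$, and the essential observation is that the operator $\rho \mapsto \mathcal{K}_n[\rho]$, built from the Kantorovich construction in~\eqref{eq22}, is \emph{monotone} and \emph{$m$-norm nonexpansive} in the input semimetric. First I would establish two auxiliary estimates: (a) monotonicity, namely that $\rho_1 \leq \rho_2$ almost everywhere with respect to $\mu^2$ implies $\mathcal{K}_n[\rho_1] \leq \mathcal{K}_n[\rho_2]$ almost everywhere with respect to $(\mu/\varsigma_n)^2$; and (b) a contraction-type bound $\|\mathcal{K}_n[\rho_1] - \mathcal{K}_n[\rho_2]\|_m \leq \|\rho_1 - \rho_2\|_m$, where the $m$-norms are computed on the respective quotient spaces. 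Claim~(a) is immediate from~\eqref{eq22}, since for each fixed tree automorphism $S \in \mathcal{T}_n$ the averaged sum is monotone in $\rho$, and the infimum preserves monotonicity. Claim~(b) follows from the iterative Kantorovich description: the Kantorovich metric is $1$-Lipschitz in the underlying ground semimetric, and the averaging $\frac{1}{2^n}\sum_j$ together with the fibrewise structure of $\mu/\varsigma_n$ over $\mu$ does not increase the $m$-norm.

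Granting these two estimates, the proof proceeds exactly as for the action case. By Lemma~\ref{l11} (which applies verbatim, as it only concerns two admissible metrics on $(X,\mu)$ and does not reference any dynamics), for every $\delta>0$ there is a set $X_0$ with $\mu(X_0)>1-\delta$ on which $\rho_1$ and $\rho_2$ define the same topology. On such a set the two metrics are uniformly comparable, so after a small excision one can write $\rho_1 \leq C\rho_2 + \text{(something supported on the small set)}$ and symmetrically. Pushing these comparisons through the monotone operator $\mathcal{K}_n$ via claim~(a), and controlling the excised part via claim~(b) and part~4) of Lemma~\ref{l10}, I obtain sandwich inequalities between $\mathbb{H}_\varepsilon(X/\varsigma_n,\mu/\varsigma_n,\mathcal{K}_n[\rho_1])$ and $\mathbb{H}_{\varepsilon'}(X/\varsigma_n,\mu/\varsigma_n,\mathcal{K}_n[\rho_2])$ for suitably adjusted $\varepsilon'$ comparable to $\varepsilon$. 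The $\asymp$ relation in Definition~\ref{d24} is insensitive to such bounded rescalings of $\varepsilon$, so the two $\varepsilon$-entropy sequences are $\asymp$-equivalent, giving $\mathcal{H}(X,\mu,\varsigma,\rho_1)=\mathcal{H}(X,\mu,\varsigma,\rho_2)$.

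The step I expect to be the main obstacle is claim~(b), the $m$-norm contraction for the iterated Kantorovich functor. The difficulty is that $\mathcal{K}_n[\rho]$ lives on the quotient $(X/\varsigma_n, \mu/\varsigma_n)$, so comparing it against a perturbation requires simultaneously tracking (i) how the Kantorovich transport plans interact with the tree-automorphism infimum in~\eqref{eq22}, and (ii) how the $m$-norm on the quotient relates to the $m$-norm upstairs. A clean way to handle~(i) is to use the iterative one-step description: since $\mathcal{K}_{n+1}[\rho]$ is the Kantorovich metric on measures over $(X/\varsigma_n,\mathcal{K}_n[\rho])$, and the Kantorovich distance satisfies $|W_{\rho}(p,q) - W_{\rho'}(p,q)| \leq \sup |\rho-\rho'|$ pointwise, one gets a one-step nonexpansiveness that can be iterated. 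The subtlety is passing from a pointwise $\sup$-bound to an $m$-norm bound, where one must dominate $|\rho_1-\rho_2|$ by an auxiliary admissible semimetric and verify that this dominating semimetric is carried through $\mathcal{K}_n$ to another admissible semimetric with controlled integral; here part~1) (admissibility is preserved, i.e.\ finiteness of $\varepsilon$-entropy, by Remark~\ref{r14}) and part~4) of Lemma~\ref{l10} do the bookkeeping.

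A cleaner alternative, which I would pursue if the direct $m$-norm estimate proves technically heavy, is to avoid claim~(b) entirely and argue purely through Lemma~\ref{l11} plus monotonicity. On the common-topology set $X_0$ one has genuine two-sided comparability $c_1\rho_2 \leq \rho_1 \leq c_2\rho_2$ after restricting to a slightly smaller compact set; the contribution of points outside $X_0$ to $\mathcal{K}_n[\rho]$ can be absorbed because a set of small $\mu$-measure corresponds, under the fibrewise dyadic structure of $\varsigma_n$, to a correspondingly small perturbation of the quotient measure, and part~2) of Lemma~\ref{l10} shows such small-diameter or small-measure pieces contribute $0$ to the $\varepsilon$-entropy for $\varepsilon$ not too small. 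This keeps the entire argument inside the monotonicity claim~(a) and the three elementary properties of $\varepsilon$-entropy, and I regard it as the safer route to a complete proof.
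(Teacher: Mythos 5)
Your overall architecture is close to the paper's in one respect: the paper's proof does hinge on an $m$-norm Lipschitz bound for the Kantorovich iteration, namely $\|\mathcal{K}_n[\rho_1]-\mathcal{K}_n[\rho_2]\|_m\leq 3\|\rho_1-\rho_2\|_m$ (Lemma~\ref{l13}), which is your claim~(b). But note that the constant there is $3$, not $1$, and for a reason you should not gloss over: after choosing a semimetric $\rho\geq|\rho_1-\rho_2|$, the error term one must control is the \emph{worst-case} matching cost $\max_{s}\frac{1}{N}\sum_j\rho(x_j,y_{s(j)})$, which is not itself a semimetric on the quotient; the paper dominates it by $3$ times the all-pairs average $\widetilde\rho(c_1,c_2)=N^{-2}\sum_{j,k}\rho(x_j,y_k)$ (Assertion~\ref{st1}), which \emph{is} a semimetric with the same $L^1$-norm. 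Your proposed route via ``the Kantorovich distance is $1$-Lipschitz in $\sup|\rho-\rho'|$'' does not yield an $m$-norm bound, precisely because of the sup-versus-integral mismatch you yourself flag; the triangle-inequality averaging trick is the missing ingredient.

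The more serious gap is in how you deploy Lemma~\ref{l11}. You assert that on a set $X_0$ where $\rho_1$ and $\rho_2$ define the same topology ``the two metrics are uniformly comparable, so after a small excision one can write $\rho_1\leq C\rho_2+(\text{error})$.'' This is false: two admissible metrics inducing the same topology (even on a compact set) need not be bi-Lipschitz equivalent --- compare $\rho$ with $\rho^{1/2}$. A mere modulus-of-continuity comparison is not enough here, because the Kantorovich iteration $\mathcal{K}_n$ involves averages of $2^n$ distances, and averages do not commute with a nonlinear modulus. The paper avoids this entirely: its Lemma~\ref{l14} fixes the metric $\rho$ and shows that the class $\mathcal{M}_\rho$ of semimetrics $\widetilde\rho$ satisfying $\mathbb{H}_\varepsilon(X,\mu,\mathcal{K}_n[\widetilde\rho\,])\leq C_1\mathbb{H}_{\varepsilon_1}(X,\mu,\mathcal{K}_n[\rho])$ is closed under domination, is $m$-closed (via Lemma~\ref{l13}), and contains $d[f]$ for every $f$ Lipschitz with respect to $\rho$ --- hence, by $L^1$-approximation, all cut semimetrics, hence all finite sums of cuts, hence (by $m$-density of such sums in $\mathcal{A}\mathrm{dm}(X,\mu)$) everything. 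Theorem~\ref{t8} then follows by applying Lemma~\ref{l14} in both directions. That density argument over cut semimetrics is the key idea your proposal is missing, and without it (or a genuine substitute for the false uniform-comparability claim) the proof does not close.
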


This theorem motivates the following definition.

\begin{definition}
\label{d25} 
A~sequence of positive numbers~$h_n$, $n \geq 1$, is said to be
a~\textit{scaling sequence of a~dyadic filtration\/} 
$\varsigma = (\varsigma_n)_{n\geq 0}$ of the space $(X,\mu)$ 
if it is scaling for some (and then also for every) 
summable admissible metric~$\rho$ on~$(X,\mu)$.
\end{definition}

\subsection{Independence of the metric of a~scaling sequence of a~filtration}
\label{ss6.2}
In this section we give a~proof of Theorem~\ref{t8} using
the theory of admissible semimetrics. The following lemma is crucial in our
proof.

\begin{lemma}
\label{l13} 
Let\/ $\varsigma = (\varsigma_n)_{n \geq 0}$ be a~dyadic filtration
of a~Lebesgue space $(X,\mu)$ and let\/ $n \geq 0$ and\/ $\rho_1,\rho_2 \in
\mathcal{A}\mathrm{dm}(X,\mu)$. Then
$$
\bigl\|\mathcal{K}_n[\rho_1] - \mathcal{K}_n[\rho_2]\bigr\|_m\leq 3 \|\rho_1-\rho_2\|_m.
$$
In other words, the Kantorovich iteration\/~$\mathcal{K}_n$ is a~Lipschitz
function with respect to the\/~$m$-norm with the Lipschitz constant equal to\/~$3$.
\end{lemma}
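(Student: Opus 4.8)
The plan is to fix an admissible semimetric $\sigma$ on $(X,\mu)$ with $|\rho_1-\rho_2|\le\sigma$ almost everywhere and $\|\sigma\|_{L^1(\mu^2)}$ within $\varepsilon$ of $\|\rho_1-\rho_2\|_m$ (such a $\sigma$ exists by the definition of the $m$-norm), and then to produce an admissible semimetric $\sigma'$ on the quotient $(X/\varsigma_n,\mu/\varsigma_n)$ with $|\mathcal{K}_n[\rho_1]-\mathcal{K}_n[\rho_2]|\le\sigma'$ and $\|\sigma'\|_{L^1}\le 3\|\sigma\|_{L^1}$; letting $\varepsilon\to 0$ then yields the Lipschitz estimate. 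I would organize the whole argument around the explicit formula~\eqref{eq22}: for elements $c_1,c_2$ of $\varsigma_n$ with points $x_{i,j}$, choose tree automorphisms $S_i\in\mathcal{T}_n$ realizing the infima defining $\mathcal{K}_n[\rho_i](c_1,c_2)$, and compare the two optimal alignments through $\sigma$.

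The first key step is a pointwise estimate. Routing the $\rho_1$-optimal alignment through the $\rho_2$-optimal one (and vice versa) and using $|\rho_1-\rho_2|\le\sigma$ gives, for every pair $(c_1,c_2)$,
\[
\bigl|\mathcal{K}_n[\rho_1]-\mathcal{K}_n[\rho_2]\bigr|(c_1,c_2)\le \min_{i=1,2}\frac{1}{2^n}\sum_{j=1}^{2^n}\sigma\bigl(x_{1,j},S_i x_{2,j}\bigr),
\]
that is, the difference is bounded by the $\sigma$-cost of an alignment that is optimal for one of the $\rho_i$ rather than for $\sigma$ itself. Inserting the $\sigma$-optimal alignment by the triangle inequality for $\sigma$ bounds the right-hand side by $\mathcal{K}_n[\sigma](c_1,c_2)$ plus an internal re-matching cost inside $c_1$ or $c_2$. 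The between-fibre term $\mathcal{K}_n[\sigma]$ is an admissible semimetric, and here the essential quantitative input is the integration bound
\[
\bigl\|\mathcal{K}_n[\sigma]\bigr\|_{L^1((\mu/\varsigma_n)^2)}\le \|\sigma\|_{L^1(\mu^2)},
\]
which I would prove by comparing the optimal alignment with the independent (product) coupling of the two uniform conditional measures and using that a point of $X/\varsigma_n$ together with a uniformly chosen representative is distributed as $\mu$ (the conditional measures of a dyadic filtration are the semi-sums, by the construction preceding~\eqref{eq22}). This term supplies the first unit of the constant.

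The hard part will be the internal re-matching term: it measures a self-displacement inside a single fibre, so it does not vanish on the diagonal and is therefore not, as it stands, a semimetric on $X/\varsigma_n$; moreover it cannot simply be absorbed into $\mathcal{K}_n[\sigma]$, because $|\mathcal{K}_n[\rho_1]-\mathcal{K}_n[\rho_2]|$ can be strictly positive exactly where $\mathcal{K}_n[\sigma]$ vanishes (the optimal $\sigma$-alignment may flip between the direct and the crossed matching while the $\rho_i$-alignments do not). I would handle this by replacing the naive bound with its admissible-semimetric (pseudometric) hull and estimating that hull through genuine between-fibre $\sigma$-distances, so that each of the two remaining re-matching contributions integrates to at most $\|\sigma\|_{L^1(\mu^2)}$; this is what produces the overall factor $3$ and, crucially, keeps the constant independent of the level $n$. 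It is essential that this estimate be carried out globally in $n$ rather than by iterating the one-step Kantorovich operator: a term-by-term or per-step accounting already loses a factor of about $2$ at each level and would give a bound growing like $2^n$. Finally, taking $\sigma'$ to be the resulting admissible semimetric (admissible by the criterion of Remark~\ref{r14}) and invoking the monotonicity and subadditivity of the $m$-norm yields $\|\mathcal{K}_n[\rho_1]-\mathcal{K}_n[\rho_2]\|_m\le 3\|\sigma\|_{L^1}$, and the lemma follows.
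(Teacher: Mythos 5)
Your opening is sound and matches the paper's starting point: choose a measurable semimetric $\sigma$ with $|\rho_1-\rho_2|\le\sigma$ and $\|\sigma\|_{L^1(\mu^2)}$ close to $\|\rho_1-\rho_2\|_m$, and observe that the difference $\mathcal{K}_n[\rho_2](c_1,c_2)-\mathcal{K}_n[\rho_1](c_1,c_2)$ is bounded by the $\sigma$-cost of the alignment that is optimal for $\rho_1$ (a small slip: this gives a bound on each signed difference by the cost at the \emph{other} metric's optimal alignment, hence a bound on the absolute value by the \emph{maximum} of the two costs, not the minimum as you wrote). From there, however, you take a turn that manufactures the main difficulty instead of resolving it. Splitting the cost as $\mathcal{K}_n[\sigma](c_1,c_2)$ plus an internal re-matching term produces a quantity of the form $\frac{1}{2^n}\sum_j\sigma(S_\sigma x_{2,j},S_1x_{2,j})$ supported on pairs of points \emph{inside a single fibre} of $\varsigma_n$; the union of the sets $c\times c$ is $\mu^2$-negligible, so this term is governed by within-fibre conditional averages of $\sigma$ that are not directly comparable to $\|\sigma\|_{L^1(\mu^2)}$. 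You correctly identify this as the hard part, but the proposed remedy --- ``replacing the naive bound with its admissible-semimetric hull and estimating that hull through genuine between-fibre $\sigma$-distances'' --- is precisely the step that requires a mechanism, and none is supplied; nor is the bookkeeping that is supposed to yield $1+2=3$ carried out. As written this is a plan, not a proof. (For what it is worth, the internal term \emph{can} be tamed: the triangle inequality through an independent $\mu$-distributed point gives $\sigma(y,y')\le\int_X[\sigma(y,x)+\sigma(x,y')]\,d\mu(x)$, so the re-matching cost is at most $2g(c_2)$ with $g(c)=\int\bar\sigma\,d\mu_c$, $\bar\sigma(y)=\int\sigma(y,x)\,d\mu(x)$, and the off-diagonal semimetric $g(c_1)+g(c_2)$ has $L^1$-norm $2\|\sigma\|_{L^1}$; but this, the symmetrization, and the $\max$-versus-$\min$ issue all have to be written down and checked.)

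The paper's proof avoids the comparison with the $\sigma$-optimal alignment altogether. Its single combinatorial input is Assertion~\ref{st1}: for any semimetric $\rho$ and any points $x_1,\dots,x_N,y_1,\dots,y_N$,
$$
\frac{1}{N}\sum_{j=1}^{N}\rho(x_j,y_j)\le\frac{3}{N^2}\sum_{j=1}^{N}\sum_{k=1}^{N}\rho(x_j,y_k),
$$
obtained by summing the triangle inequality $\rho(x_j,y_j)\le\rho(x_j,y_{j+k})+\rho(x_{j+k-m},y_{j+k})+\rho(x_{j+k-m},y_j)$ over $j,k,m$ modulo $N$. Consequently the $\sigma$-cost of \emph{any} alignment --- indeed the maximum over all permutations in $S_N$, a far larger class than $\mathcal{T}_n$ --- is at most $3\widetilde\sigma(c_1,c_2)$, where $\widetilde\sigma(c_1,c_2)=\frac{1}{N^2}\sum_{j,k}\sigma(x_{1,j},x_{2,k})$ is the all-pairs average. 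This $\widetilde\sigma$ is the independent-coupling cost you already invoke for the weaker estimate $\|\mathcal{K}_n[\sigma]\|_{L^1}\le\|\sigma\|_{L^1}$; it is a bona fide semimetric on $X/\varsigma_n$, and because the conditional measures are uniform it satisfies $\|\widetilde\sigma\|_{L^1((\mu/\varsigma_n)^2)}=\|\sigma\|_{L^1(\mu^2)}$ exactly. That yields $\|\mathcal{K}_n[\rho_1]-\mathcal{K}_n[\rho_2]\|_m\le3\|\sigma\|_{L^1}$ in one stroke, with the constant manifestly independent of $n$. The missing idea in your proposal is exactly this lemma (worst-case matching cost is at most three times the all-pairs average); with it, no decomposition into optimal-alignment and re-matching terms is needed.
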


The proof of Lemma~\ref{l13} uses the following observation which follows
immediately from the triangle inequality.

\begin{statement}
\label{st1}
Let\/ $N \geq 1$, let $\rho$ be a~semimetric on a~set~$X$, and let\/
$x_1,\dots,x_N,\allowbreak y_1,\dots, y_N \in X$. Then
$$
\frac{1}{N}\sum_{j=1}^{N} \rho(x_j,y_j) \leq
\frac{3}{N^2} \sum_{j=1}^{N}\sum_{k=1}^{N} \rho(x_j,y_k).
$$
\end{statement}

\begin{proof}
Let $k,m \in \{0,\dots,N-1\}$. Then for every $j$, \,$1\leq j \leq N$, 
it follows from the triangle inequality that
\begin{equation*}
\label{trin}
\rho(x_j,y_j)\leq \rho(x_j,y_{j+k})+\rho(x_{j+k-m},
y_{j+k})+\rho(x_{j+k-m},y_j).
\end{equation*}
Summing this inequality over all $j$,~$k$,~$m$ modulo~$N$ and dividing 
the result by~$N^3$, we obtain the desired inequality. 
\end{proof}

\begin{proof}[of Lemma~\ref{l13}]
Let $N = 2^n$ and let $c_1,c_2$ be two elements of the partition
$\varsigma_n$. Moreover, let $c_1 = \{x_1,\dots, x_{N}\}$ and $c_2 =
\{y_1,\dots,y_{N}\}$ and let $\rho$ be a~measurable semimetric on~$(X,\mu)$
such that $|\rho_2-\rho_1|\leq \rho$. Then
$$
\mathcal{K}_n[\rho_2](c_1,c_2) \leq \mathcal{K}_n[\rho_1+\rho](c_1,c_2)\leq
\mathcal{K}_n[\rho_1](c_1,c_2)+\max_{s
\in S_{N}}\frac{1}{N}\sum_{j=1}^{N}\rho(x_j,y_{s(j)}),
$$
where the maximum is taken over all permutations~$s$ in the symmetric group
$S_{N}$. Estimating the last term using Assertion~\ref{st1}, we obtain the
inequality
$$
\mathcal{K}_n[\rho_2](c_1,c_2) \leq
\mathcal{K}_n[\rho_1](c_1,c_2)+3\widetilde\rho(c_1,c_2), \quad\!\!
\text{where}\quad\!\! \widetilde\rho(c_1,c_2)\,{=}\,
\frac{1}{N^2}\sum_{j=1}^{N}\sum_{k=1}^{N}\rho(x_j,y_{k}).
$$
Obviously, the symmetric inequality also holds:
$$
\mathcal{K}_n[\rho_1](c_1,c_2) \leq
\mathcal{K}_n[\rho_2](c_1,c_2)+3\widetilde\rho(c_1,c_2).
$$
It only remains to note that the function $\widetilde\rho$ defined in this way
is a~measurable semimetric on the space $X/\varsigma_n$ with the quotient
measure $\mu/\varsigma_n$, and
$\|\widetilde\rho\|_{_{L^1(X^2,\mu^2)}}=\|\rho\|_{_{L^1(X^2,\mu^2)}}$. Hence,
$$
\bigl\|\mathcal{K}_n[\rho_1]-
\mathcal{K}_n[\rho_2]\bigr\|_m\leq 3\|\widetilde\rho\|_{_{L^1(X^2,\mu^2)}} =
3 \|\rho\|_{_{L^1(X^2,\mu^2)}}.
$$
Passing to the infimum over all possible semimetrics~$\rho$ dominating the
modulus of~the difference $|\rho_1-\rho_2|$, we obtain the desired inequality. 
\end{proof}

The following lemma is an analogue of Lemma~9 in~\cite{13}.

\begin{lemma}
\label{l14} 
Let\/ $\rho, \widetilde\rho \in \mathcal{A}\mathrm{dm}(X,\mu)$, 
where\/ $\rho$ is a~metric. Then for every\/ $\varepsilon>\nobreak0$ 
there are\/ $\varepsilon_1>\nobreak0$ and\/ $C_1>\nobreak0$ 
such that the following inequality holds for every\/ $n \geq 0$:
\begin{equation}
\label{eq23}
\mathbb{H}_\varepsilon(X,\mu, \mathcal{K}_n[\widetilde\rho\,])\leq
C_1 \mathbb{H}_{\varepsilon_1}(X,\mu,
\mathcal{K}_n[\rho]).
\end{equation}
\end{lemma}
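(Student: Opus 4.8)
The plan is to reduce the statement to a single pointwise domination of $\widetilde\rho$ by $\rho$ with a controllably small \emph{admissible} remainder, and then to push that domination through the Kantorovich iteration $\mathcal{K}_n$ with constants that do not depend on~$n$. First I record a harmless reduction: since $\rho$ is a metric, $\widetilde\rho+\delta_0\rho$ is an admissible metric for any $\delta_0>0$ and dominates $\widetilde\rho$, so by monotonicity of $\mathcal{K}_n$ and of $\varepsilon$-entropy (part~1) of Lemma~\ref{l10}) it suffices to bound $\mathbb{H}_\varepsilon(X,\mu,\mathcal{K}_n[\widetilde\rho+\delta_0\rho])$; thus I may assume $\widetilde\rho$ is itself an admissible metric. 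The goal then becomes: given $\varepsilon>0$, produce a constant $C\geq 1$ and an admissible semimetric $r\in\mathcal{A}\mathrm{dm}(X,\mu)$ with $\|r\|_{L^1}$ as small as we wish, such that $\widetilde\rho\leq C\rho+r$ holds $\mu^2$-almost everywhere.

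Constructing this $r$ is the heart of the argument, and it splits the pairs $(x,y)$ into three regimes. Applying Lemma~\ref{l11} to the admissible metrics $\rho$ and $\widetilde\rho$ I obtain, for small $\delta$, a set on which they induce the same topology; intersecting it with $\{h\leq L\}$, where $h(x)=\int_X\widetilde\rho(x,z)\,d\mu(z)\in L^1(\mu)$ and $L$ is chosen so that $\mu(\{h>L\})$ is small, and then passing to a compact subset $G$ of measure $>1-\delta$ (separability plus inner regularity), I arrange that on $G$ the two metrics induce the same topology and, by the elementary bound $\widetilde\rho(x,y)\leq h(x)+h(y)$ (integrate the triangle inequality), that $\widetilde\rho\leq M:=2L$ there. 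On the compact set $G$ the identity $(G,\rho)\to(G,\widetilde\rho)$ is then uniformly continuous, giving $\delta'>0$ with $\widetilde\rho(x,y)<\eta$ whenever $x,y\in G$ and $\rho(x,y)<\delta'$. Setting $C=\max(M/\delta',1)$ and $B=X\setminus G$, I define
\begin{equation*}
r(x,y)=\min\bigl(\widetilde\rho(x,y),\eta\bigr)+p(x)+p(y),\qquad p=(h+M)\chi_{B}.
\end{equation*}
Here $\min(\widetilde\rho,\eta)$ is an admissible semimetric (truncation preserves the triangle inequality and, below the scale $\eta$, keeps the $\varepsilon$-entropy finite), the sum-of-functions term $p(x)+p(y)$ is an admissible semimetric as well, and $\|r\|_{L^1}\leq\eta+2\int_B h\,d\mu+2M\mu(B)$. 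Checking the three regimes: for pairs meeting $B$ one has $\widetilde\rho\leq p(x)+p(y)\leq r$; for pairs in $G$ with $\rho<\delta'$ one has $\widetilde\rho<\eta$, hence $\widetilde\rho=\min(\widetilde\rho,\eta)\leq r$; and for pairs in $G$ with $\rho\geq\delta'$ one has $\widetilde\rho\leq M\leq C\rho$. Thus $\widetilde\rho\leq C\rho+r$ everywhere, and by absolute continuity of the integral of $h$ the quantity $\|r\|_{L^1}$ can be made smaller than any prescribed bound by first fixing $\eta$, then $L$, and finally shrinking $\mu(B)$.

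With the domination in hand the conclusion is immediate and uniform in~$n$. By monotonicity, $\mathcal{K}_n[\widetilde\rho]\leq\mathcal{K}_n[C\rho+r]$, and since $C\rho+r$ and $C\rho$ are admissible, Lemma~\ref{l13} gives $\|\mathcal{K}_n[C\rho+r]-\mathcal{K}_n[C\rho]\|_m\leq 3\|r\|_m\leq 3\|r\|_{L^1}$ with a Lipschitz constant independent of~$n$. Choosing $r$ with $3\|r\|_{L^1}<\varepsilon^2/32$ and invoking parts~1) and~4) of Lemma~\ref{l10}, together with the homogeneity $\mathcal{K}_n[C\rho]=C\,\mathcal{K}_n[\rho]$ and the scaling bound $\mathbb{H}_\varepsilon(X,\mu,C\sigma)\leq\mathbb{H}_{\varepsilon/C}(X,\mu,\sigma)$ (valid for $C\geq1$), I obtain
\begin{equation*}
\mathbb{H}_\varepsilon(X,\mu,\mathcal{K}_n[\widetilde\rho])\leq\mathbb{H}_\varepsilon(X,\mu,\mathcal{K}_n[C\rho+r])\leq\mathbb{H}_{\varepsilon/4}(X,\mu,C\,\mathcal{K}_n[\rho])\leq\mathbb{H}_{\varepsilon/(4C)}(X,\mu,\mathcal{K}_n[\rho]).
\end{equation*}
This is the asserted inequality, in fact with $\varepsilon_1=\varepsilon/(4C)$ and $C_1=1$.

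The step I expect to be the main obstacle is precisely this uniform-in-$n$ passage, and it dictates the whole organization. One cannot simply split $\mathcal{K}_n[C\rho+r]$ into a dominant and a negligible part, because the iteration \eqref{eq22} is \emph{superadditive} rather than subadditive in its cost: the infimum over tree automorphisms only yields $\mathcal{K}_n[\rho_1+\rho_2]\geq\mathcal{K}_n[\rho_1]+\mathcal{K}_n[\rho_2]$. The error must therefore be transported through monotonicity combined with the $m$-norm Lipschitz estimate of Lemma~\ref{l13}, whose constant~$3$ is uniform in~$n$; this uniformity is exactly what makes $\varepsilon_1$ and $C_1$ independent of~$n$. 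The second delicate point, hidden in the construction of~$r$, is that the near-diagonal discrepancy between the two metrics cannot be absorbed into an additive constant, since a discrete ``$\eta$ off the diagonal'' has infinite $\varepsilon$-entropy and is not admissible; truncating to $\min(\widetilde\rho,\eta)$ is what keeps the remainder an admissible semimetric of small norm while still dominating $\widetilde\rho$ near the diagonal.
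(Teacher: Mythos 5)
Your route is genuinely different from the paper's. The paper argues ``softly'': it introduces the class $\mathcal{M}_\rho\subset\mathcal{A}\mathrm{dm}(X,\mu)$ of semimetrics satisfying the conclusion, shows via Lemma~\ref{l13} and Lemma~\ref{l10} that this class is downward closed and closed in the $m$-norm, checks directly that it contains $d[f]$ for $f$ Lipschitz with respect to $\rho$ (hence for all $f\in L^1$), and concludes because finite sums of cut semimetrics are $m$-dense in $\mathcal{A}\mathrm{dm}(X,\mu)$ (the proof of Lemma~9 in the cited paper of the second author). You instead build a single pointwise domination $\widetilde\rho\leq C\rho+r$ with $\|r\|_{L^1}$ small, using Lemma~\ref{l11}, inner regularity and uniform continuity on a compact set, and then push it through $\mathcal{K}_n$. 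Both proofs rest on the same engine --- the $n$-uniform Lipschitz property of $\mathcal{K}_n$ in the $m$-norm --- and your comparison step, your diagnosis of the superadditivity obstruction, and your remark that the near-diagonal discrepancy must be truncated rather than absorbed into a constant are all correct. Your approach is more self-contained (it does not need the density of cut semimetrics), at the cost of the compactness machinery; the paper's approach is softer and transfers more easily to other functionals that are $m$-Lipschitz uniformly in $n$.

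There is, however, one concrete error in your construction: the term $p(x)+p(y)$ with $p=(h+M)\chi_B$ is \emph{not} an admissible semimetric, and consequently neither is your $r$ nor $C\rho+r$. (Strictly it is not even a semimetric, since it does not vanish on the diagonal; with the usual convention of setting it to zero there, any two distinct points of $B$ are at distance at least $2M$, so $B$ --- a set of positive measure, hence uncountable --- is uniformly discrete, and no full-measure subset can be separable as Definition~\ref{d16} requires. The set $B$ cannot be made null, because Lemma~\ref{l11} and inner regularity only give sets of measure $1-\delta$.) As written, you therefore invoke Lemma~\ref{l13} and part~4) of Lemma~\ref{l10} outside their stated hypotheses. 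The gap is repairable without changing the strategy: the $m$-norm in Definition~\ref{d17} is an infimum over \emph{measurable} semimetrics, and the proofs of Lemma~\ref{l13} and of parts 2)--4) of Lemma~\ref{l10} never use separability; alternatively, apply Assertion~\ref{st1} directly to $r$ to get $\mathcal{K}_n[\widetilde\rho\,]\leq C\,\mathcal{K}_n[\rho]+3\widetilde r_n$ with $\|\widetilde r_n\|_{L^1(\mu^2)}=\|r\|_{L^1(\mu^2)}$, and finish with parts 2) and 3) of Lemma~\ref{l10}. You should either make this repair explicit or replace $p(x)+p(y)$ by a genuinely admissible remainder; as the proposal stands, the claimed admissibility of $r$ is false.
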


\vskip1pt

\begin{proof}
Consider the set $\mathcal{M}_\rho\subset \mathcal{A}\mathrm{dm}
(X,\mu)$ consisting of the semimetrics $\widetilde\rho$ for which the
conclusion of the lemma holds. Our objective is to prove that 
$\mathcal{M}_\rho = \mathcal{A}\mathrm{dm}(X,\mu)$.

\vskip1pt

The set $\mathcal{M}_\rho$ obviously contains, together with every semimetric,
all semimetrics dominated by that semimetric, 
that is, if~$\rho_1 \in \mathcal{M}_\rho$ and $\rho_2 \leq \rho_1$, 
then $\rho_2 \in \mathcal{M}_\rho$.

\vskip1pt

We claim that $\mathcal{M}_\rho$ is closed under 
the $m$-norm. Indeed, if a~semimetric $\rho_1$ is contained in the closure
of~$\mathcal{M}_\rho$, then for every $\varepsilon>\nobreak0$ there is a~semimetric
$\rho_2 \in \mathcal{M}_\rho$ for which $\|\rho_1-\rho_2\|_m<\varepsilon^2/96$.
It follows from Lemma~\ref{l13} that the inequality
$$
\bigl\|\mathcal{K}_n[\rho_1]-\mathcal{K}_n[\rho_2]\bigr\|_m<\frac{\varepsilon^2}{32}
$$
holds for every $n \geq 0$. By Lemma~\ref{l10}, the following inequality holds:
$$
\mathbb{H}_\varepsilon(X,\mu,\mathcal{K}_n[\rho_1])\leq
\mathbb{H}_{\varepsilon/4}(X,\mu,\mathcal{K}_n[\rho_2]),
$$
which implies that $\rho_1$ also belongs to~$\mathcal{M}_\rho$.

\vskip1pt

Let $f\colon (X,\mu) \to \mathbb{R}$ be a~measurable function. We denote
by~$d[f]$ the semimetric constructed from $f$ as follows:
$$
d[f](x,y)=|f(x)-f(y)|, \qquad x,y\in X.
$$
If~$f$ is the characteristic function of a~measurable set $A \subset X$,
then $d[f]$ is called a~cut semimetric (or simply a~cut).

It can readily be seen that, if~$f$ is a~Lipschitz function with respect 
to~$\rho$, then the semimetric $d[f]$ belongs to~$\mathcal{M}_\rho$. 
Indeed, if~$|f(x)-f(y)|\leq C\rho(x,y)$ for some constant
$C>\nobreak1$ and all $x,y \in X$, then $\mathcal{K}_n[d[f]]\leq C\mathcal{K}_n[\rho]$
for $n \geq 0$, and therefore
$$
\mathbb{H}_\varepsilon(X,\mu,\mathcal{K}_n[d[f]])\leq
\mathbb{H}_{\varepsilon/C}(X,\mu,\mathcal{K}_n[\rho]),
$$
which implies that $d[f]\in \mathcal{M}_\rho$.

\vskip1pt

We can readily verify the inequality
$$
|d[f_1](x,y)-d[f_2](x,y)|\leq |f_1(x)-f_2(x)|+|f_1(y)-f_2(y)|,
\qquad x,y \in X,
$$
and the function on the right-hand side is a~semimetric on~$X$. This implies
the inequality
$$
\|d[f_1]-d[f_2]\|_m\leq 2\|f_1-f_2\|_{_{L^1(X,\mu)}}.
$$
Let $f \in L^1(X,\mu)$. Approximating the function $f$ by functions Lipschitz
with respect to~$\rho$ in the $L^1$-norm and applying the closedness
of~$\mathcal{M}_\rho$, we see that $d[f]\in \mathcal{M}_\rho$.

It follows from what has been said that $\mathcal{M}_\rho$ contains
all semimetrics that are dominated by a~finite sum of cut semimetrics.
Semimetrics of this kind approximate, with respect to the $m$-norm, an
arbitrary summable admissible semimetric (see~the proof of Lemma~9
in~~\cite{13}). Thus, it follows from the closedness of~$\mathcal{M}_\rho$ 
under the~$m$-norm that $\mathcal{M}_\rho=\mathcal{A}\mathrm{dm}(X,\mu)$. 
\end{proof}

\begin{proof}[of Theorem~\ref{t8}]
By symmetry, it suffices to prove the inclusion $\mathcal{H}(X,\mu,\varsigma,
\rho_1)\subset \mathcal{H}(X,\mu,\varsigma,\rho_2)$. If the first set is empty,
then there is nothing to prove. Suppose that~it~is non-empty and that a~sequence
$h=(h_n)_{n\geq 0}$ of positive numbers is such~that 
$h \in \mathcal{H}(X,\mu,\varsigma,\rho_1)$. 
We claim that $h \in \mathcal{H}(X,\mu,\varsigma,\rho_2)$.

\vskip1pt

It obviously follows from Lemma~\ref{l14} applied to~$\rho\,{=}\,\rho_1$ and
$\widetilde\rho\,{=}\,\rho_2$ that for every $\varepsilon>\nobreak0$ 
we have the relation
$$
\limsup_{n \to \infty} \frac{\mathbb{H}_\varepsilon(X, \mu,
\mathcal{K}_n[\rho_2])}{h_n}<+\infty.
$$
It remains to be proved that the lower limit is bounded away from zero. 
To this end, we apply Lemma~\ref{l14}, transposing the roles of the metrics
$\rho_1$ and $\rho_2$. Namely, when  
$\rho\,{=}\,\rho_2$ and $\widetilde\rho\,{=}\,\rho_1$
and for every $\varepsilon>\nobreak0$ there are an $\varepsilon_1>\nobreak0$ 
and a~$C_1>\nobreak0$ such that
$$
\mathbb{H}_\varepsilon(X,\mu,
\mathcal{K}_n[\rho_1])\leq C_1 \mathbb{H}_{\varepsilon_1}(X,\mu,
\mathcal{K}_n[\rho_2]).
$$
If $\varepsilon$ is sufficiently small, then
$$
0<\liminf_{n \to \infty} \frac{\mathbb{H}_\varepsilon(X,\mu,
\mathcal{K}_n[\rho_1])}{h_n}\leq C_1\liminf_{n \to \infty}
\frac{\mathbb{H}_{\varepsilon_1}(X,\mu,
\mathcal{K}_n[\rho_2])}{h_n}.
$$
Thus, if~$\widetilde\varepsilon<\varepsilon_1$, then
$$
0<\liminf_{n \to \infty} \frac{\mathbb{H}_{\widetilde\varepsilon}(X,\mu,
\mathcal{K}_n[\rho_2])}{h_n}.
$$
This shows that $h \in \mathcal{H}(X,\mu,\varsigma,\rho_2)$ and
completes the proof of the theorem. 
\end{proof}

The following remark enables us to find a~scaling sequence of a~filtration
by making the calculations for a~suitable family of semimetrics.

\begin{statement}
\label{st2}
Let a~sequence of summable admissible semimetrics\/ $(\rho_k)_{k \geq 0}$ 
be non-\allowbreak decreasing and together separate the points of a~space $(X,\mu)$ 
up~to a~set of measure zero. Let\/ $\varsigma = (\varsigma_n)_{n \geq 0}$ be 
a~dyadic filtration on~$(X,\mu)$. Suppose that a~sequence\/ $h=(h_n)_{n \geq 0}$
of positive numbers is scaling for~$\varsigma$ and for every
semimetric~$\rho_k$, $k \geq\nobreak 0$. Then $h$ is scaling for~$\varsigma$.
\end{statement}

\begin{proof}
We first choose small positive coefficients $C_k$, $k \geq\nobreak 0$, in such
a~way that the function $\rho=\sum_{k\geq 0} C_k\rho_k$ is a~summable admissible
metric. By Theorem~\ref{t8} and Definition~\ref{d25}, it suffices to show
that $h \in \mathcal{H}(X,\mu,\varsigma,\rho)$. The metric~$\rho$ obviously dominates
every semimetric~$\rho_k$, $k\geq 0$, and therefore the bound for the
lower limit is obvious. We now estimate the upper limit.

\vskip1pt

Let $\varepsilon>\nobreak0$ and take an index $l$ for which
$$
\biggl\|\rho - \sum_{k= 0}^{l} C_k\rho_k\biggr\|_m = \biggl\|\sum_{k\geq
l+1} C_k\rho_k\biggr\|_{_{L^1(X^2,\mu^2)}}<\frac{\varepsilon^2}{96}.
$$
Then, by Lemma~\ref{l13}, for every $n \geq 0$ we have the inequality
$$
\biggl\|\mathcal{K}_n[\rho] - \mathcal{K}_n\biggl[\sum_{k= 0}^{l}
C_k\rho_k\biggr]\biggr\|_m<\frac{\varepsilon^2}{32},
$$
whence, by Lemma~\ref{l10}, the following inequality holds:
$$
\mathbb{H}_\varepsilon(X,\mu,\mathcal{K}_n[\rho])\,{\leq}\,
\mathbb{H}_{\varepsilon/4}\biggl(X,\mu,\mathcal{K}_n\biggl[\sum_{k= 0}^{l}
C_k\rho_k\biggr]\biggr)\,{\leq}\,
\mathbb{H}_{\varepsilon/4}\biggl(X,\mu,\sum_{k= 0}^{l} C_k \cdot
\mathcal{K}_n[\rho_l]\biggr) \leq C h_n
$$
for some constant $C>\nobreak0$. This readily implies the desired bound 
for the upper limit. 
\end{proof}

For completeness we point out that the definition of the entropy of a~homogeneous
filtration (in particular, a~dyadic one) and the original papers (such as~\cite{9})
used the metric entropies of some partitions of the orbit space
of the automorphism group of a~tree related to a~filtration rather than the
$\varepsilon$-entropy of the metrics. In comparable terms,
the difference is that Kantorovich iterations of semimetrics determined
by arbitrary functions with finitely many values were used. In essence, it was
proved above that the supremum of the scaling sequences of entropies over all
such semimetrics (that is, over all functions with finitely many values) can be
replaced by an arbitrary metric. It seems that such a~result holds for many
inhomogeneous filtrations, for example, for semi-homogeneous ones (that~is, for
filtrations with respect to central measures on the path spaces of arbitrary
graded graphs).

\subsection{Computation of scaling sequences of actions}
\label{ss6.3}
As already mentioned above (see Theorem~\ref{t6}), the class
$\mathcal{H}(\mathcal{T}(\mathrm{OP}),\mu^\sigma,\mathcal{A})$ was computed
in~\cite{27}. In the~same paper, the class
$\mathcal{H}(\mathcal{T}(\mathrm{OP}),\mu^\sigma,\mathcal{D})$ was computed
for the action $\kappa$ of~the group $\mathcal{D}$ with the natural equipment
$\mathcal{D}_n$, $n\geq 1$, and it was shown that these classes coincide. 
In this subsection, we describe another way of computing the class
$\mathcal{H}(\mathcal{T}(\mathrm{OP}),\mu^\sigma,\mathcal{D})$.

\vskip2pt

We note the following general fact.

\begin{lemma}
\label{l15} 
Let an action of some equipped group\/ $G$ on a~space $(X_1,\mu_1)$
have a~scaling sequence\/ $h_n$, and its action on a~space $(X_2,\mu_2)$ have 
a~\textup{`}discrete spectrum\/\textup{'} (that~is, an invariant summable
admissible metric). Then the direct product of the actions on the space
$(X_1\times X_2, \mu_1\times \mu_2)$ also has a~scaling sequence\/ $h_n$.
\end{lemma}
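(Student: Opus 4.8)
The plan is to test the sequence $h_n$ directly on the obvious product metric and then invoke the metric independence of scaling sequences of an action (Theorem~\ref{t7}). Since $h_n$ is a scaling sequence of the action of~$G$ on $(X_1,\mu_1)$, I may fix a summable admissible \emph{metric} $\rho_1 \in \mathcal{A}\mathrm{dm}(X_1,\mu_1)$ with $\mathbb{H}_\varepsilon(X_1,\mu_1,T_{\mathrm{av}}^{G_n}\rho_1)\asymp h_n$ for all sufficiently small $\varepsilon$; and I let $\rho_2 \in \mathcal{A}\mathrm{dm}(X_2,\mu_2)$ be the $G$-invariant admissible metric that witnesses the discrete spectrum. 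On the product I take
\[
\rho\bigl((x_1,x_2),(y_1,y_2)\bigr) = \rho_1(x_1,y_1)+\rho_2(x_2,y_2),
\]
which is again a summable admissible metric. The key observation is that averaging distributes over this sum and, because $\rho_2$ is $G$-invariant, leaves the second summand unchanged: writing $\sigma_n = T_{\mathrm{av}}^{G_n}\rho_1$, one gets $T_{\mathrm{av}}^{G_n}\rho = \sigma_n \oplus \rho_2$. Thus the first coordinate carries all the growth while the $X_2$-part remains the fixed metric $\rho_2$, and the whole matter reduces to bounding $\mathbb{H}_\varepsilon(X_1\times X_2,\mu_1\times\mu_2,\sigma_n\oplus\rho_2)$ above and below by $h_n$.

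For the upper bound I would write $\sigma_n\oplus\rho_2 = \widetilde{\sigma}_n + \widetilde{\rho}_2$, where $\widetilde{\sigma}_n$ and $\widetilde{\rho}_2$ are the lifts to $X_1\times X_2$ of $\sigma_n$ and $\rho_2$ through the two coordinate projections, and apply the subadditivity estimate in part~3) of Lemma~\ref{l10}. Since each lift ignores one factor, any partition realizing the $\varepsilon$-entropy downstairs lifts to a partition upstairs with the same measures and diameters, so $\mathbb{H}_\varepsilon(X_1\times X_2,\mu_1\times\mu_2,\widetilde{\sigma}_n)\leq \mathbb{H}_\varepsilon(X_1,\mu_1,\sigma_n)$ and likewise for $\widetilde{\rho}_2$. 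The first term is $\asymp h_n$, while the second is a constant independent of $n$ (here the finiteness of the $\varepsilon$-entropy of the \emph{fixed} admissible metric $\rho_2$, guaranteed by Remark~\ref{r14}, is exactly what the discrete-spectrum hypothesis provides). As $h_n$ stays bounded below (the $\varepsilon$-entropy takes only the value $0$ or values $\geq 1$, so it cannot decay to $0$ while remaining $\asymp h_n$), the additive constant is harmless and $\limsup_n \mathbb{H}_\varepsilon(X_1\times X_2,\mu_1\times\mu_2,\sigma_n\oplus\rho_2)/h_n<\infty$.

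For the lower bound I would first use monotonicity in the metric (part~1) of Lemma~\ref{l10}) to discard the $\rho_2$-summand, $\mathbb{H}_\varepsilon(X_1\times X_2,\mu_1\times\mu_2,\sigma_n\oplus\rho_2)\geq \mathbb{H}_\varepsilon(X_1\times X_2,\mu_1\times\mu_2,\widetilde{\sigma}_n)$, and then recover the downstairs entropy by a Fubini slicing argument. Given a near-optimal partition $Y_0,\dots,Y_k$ of the product for $\widetilde{\sigma}_n$ (with $\mu(Y_0)<\varepsilon$ and all $\widetilde{\sigma}_n$-diameters $<\varepsilon$), Markov's inequality applied to the $X_2$-integral of the slice measures of $Y_0$ yields a point $x_2^\ast$ whose slice $\{x_1 : (x_1,x_2^\ast)\in Y_0\}$ has $\mu_1$-measure $<\sqrt{\varepsilon}$; the remaining slices then form a partition of $X_1$ into $k$ sets of $\sigma_n$-diameter $<\varepsilon$, so $\mathbb{H}_{\sqrt{\varepsilon}}(X_1,\mu_1,\sigma_n)\leq \log_2 k$. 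Since $\mathbb{H}_{\sqrt{\varepsilon}}(X_1,\mu_1,\sigma_n)\asymp h_n$ for small $\varepsilon$, this forces $\liminf_n \mathbb{H}_\varepsilon(X_1\times X_2,\mu_1\times\mu_2,\sigma_n\oplus\rho_2)/h_n>0$. Combining the two bounds gives $\mathbb{H}_\varepsilon(X_1\times X_2,\mu_1\times\mu_2,T_{\mathrm{av}}^{G_n}\rho)\asymp h_n$ for all sufficiently small $\varepsilon$, hence $h_n\in\mathcal{H}(X_1\times X_2,\mu_1\times\mu_2,G,\rho)$ and, by Theorem~\ref{t7}, $h_n\in\mathcal{H}(X_1\times X_2,\mu_1\times\mu_2,G)$. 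I expect the slicing step in the lower bound to be the only genuinely nontrivial point; the distributivity of the averaging over the product and the invariance of $\rho_2$ are what make the reduction go through cleanly.
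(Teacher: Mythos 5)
Your proposal is correct and follows essentially the same route as the paper: the sum metric $\rho=\rho_1+\rho_2$ on the product, the observation that averaging fixes the invariant summand $\rho_2$, and the two-sided estimate via the monotonicity and subadditivity properties of $\varepsilon$-entropy from Lemma~\ref{l10}. The paper states the two-sided bound in one sentence, whereas you supply the details it leaves implicit (in particular the Fubini slicing argument recovering the downstairs entropy of the lifted semimetric), and those details are sound.
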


\begin{proof}
Let $\rho_1$ be a~generating admissible semimetric on~$(X_1,\mu_1)$
and $\rho_2$ an invariant admissible metric on~$(X_2,\mu_2)$.
Then the semimetric
$$
\rho\bigl((x_1,x_2),(y_1,y_2)\bigr)=\rho_1(x_1,y_1)+\rho_2(x_2,y_2)
$$
is admissible and generating for the direct product of the actions on the space
$(X_1\times X_2, \mu_1\times \mu_2)$. Here the $\varepsilon$-entropies
of finite averagings of this semimetric admit two-sided estimates, because
of the inequality in Lemma~\ref{l10}, using the $\varepsilon$-entropies of an
averaging of the semimetric $\rho_1$, since the metric $\rho_2$ is invariant
and has finite $\varepsilon$-entropies. 
\end{proof}

This implies an assertion, which was proved in~\cite{27}.

\begin{corollary}
\label{c7} 
The sequence\/ $h_n = 2^{\sum_{i=0}^{n-1}\sigma_i}$ is scaling for the
action\/ $\operatorname{diag}$ of the group\/ $\mathcal{D}$ on the space\/
$(I^{\mathcal{D}}\times I^{\mathbb{N}}, \omega^\sigma)$, and therefore 
for the isomorphic canonical action\/ $\kappa$ of\/~$\mathcal{D}$
on\/~$(\mathcal{T}(\mathrm{OP}),\mu^\sigma)$.
\end{corollary}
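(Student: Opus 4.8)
The plan is to realize the action $\operatorname{diag}$ of $\mathcal{D}$ on $(I^{\mathcal{D}}\times I^{\mathbb{N}},\omega^\sigma)$ as a direct product and then apply Lemma~\ref{l15}. Formula~\eqref{eq1} shows that $\operatorname{diag}$ acts coordinatewise: on the first factor $(I^{\mathcal{D}},m^\sigma)$ it is the shift $w(\,\cdot\,)\mapsto w(\,\cdot\,+g)$, and on the second factor $(I^{\mathbb{N}},m)$ it is the translation $\alpha\mapsto\alpha+\tau(g)$. Since $\omega^\sigma=m^\sigma\times m$ by Definition~\ref{d2}, the whole system is the direct product of these two $\mathcal{D}$-actions. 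Thus, by Lemma~\ref{l15}, it suffices to prove that the second factor has discrete spectrum and that the first factor has scaling sequence $h_n=2^{\sum_{i=0}^{n-1}\sigma_i}$ with respect to the equipment $\mathcal{D}_n$.

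First I would verify that the translation action on $(I^{\mathbb{N}},m)$ has discrete spectrum, that is, admits an invariant summable admissible metric. Here $I^{\mathbb{N}}=(\mathbb{Z}/2\mathbb{Z})^{\mathbb{N}}$ is a compact abelian group with Haar measure $m$, and $\mathcal{D}$ acts by translations through~$\tau$. Since $|\alpha_i-\beta_i|=\alpha_i\oplus\beta_i$ is unchanged under adding a common element to both arguments, the formula $d(\alpha,\beta)=\sum_{i\geq1}2^{-i}(\alpha_i\oplus\beta_i)$ defines a translation-invariant summable admissible metric. This is exactly the hypothesis imposed on $(X_2,\mu_2)$ in Lemma~\ref{l15}.

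The substantive step is to compute the scaling sequence of the shift on $(I^{\mathcal{D}},m^\sigma)$. By Definition~\ref{d2} the measure $m^\sigma$ is concentrated on configurations constant on the cosets of $\mathcal{D}^\sigma$, and the factor map identifies $(I^{\mathcal{D}},m^\sigma,\text{shift})$ with the Bernoulli system $(I^{Q},m,\text{shift through }\pi)$, where $Q=\mathcal{D}/\mathcal{D}^\sigma$ and $\pi\colon\mathcal{D}\to Q$ is the quotient map. For each $c\in Q$ take the cut semimetric $\rho_c(\bar w,\bar w')=|\bar w(c)-\bar w'(c)|\in\mathcal{A}\mathrm{dm}$; the countable family $(\rho_c)_{c\in Q}$ separates points, so by Lemma~\ref{l12} it suffices to show that $h_n$ is scaling for each $\rho_c$. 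Averaging over $\mathcal{D}_n$ and using that $\pi|_{\mathcal{D}_n}$ has image of order $|\pi(\mathcal{D}_n)|=2^{\sum_{i<n}\sigma_i}=h_n$ with fibres of constant size $2^n/h_n$, one gets
\[
T_{\mathrm{av}}^{\mathcal{D}_n}\rho_c(\bar w,\bar w')=\frac{1}{h_n}\sum_{j\in\pi(\mathcal{D}_n)+c}|\bar w(j)-\bar w'(j)|,
\]
which is the normalized Hamming metric on the $h_n$ coordinates indexed by the coset $\pi(\mathcal{D}_n)+c$; under the Bernoulli measure these coordinates are independent and uniform. I expect the main obstacle to be the (classical) two-sided estimate $\mathbb{H}_\varepsilon(\{0,1\}^{N},m,\text{norm.\ Hamming})\asymp N$ for small~$\varepsilon$, with $N=h_n$: the upper bound comes from covering by Hamming balls of radius $\tfrac{\varepsilon}{2}N$, and the lower bound from the fact that a set of normalized-Hamming diameter below~$\varepsilon$ lies in a Hamming ball and hence has $m$-measure at most $2^{-N(1-H_2(\varepsilon))}$ (here $H_2$ is the binary entropy function and $1-H_2(\varepsilon)>0$ once $\varepsilon$ is small), so covering all but $\varepsilon$ of the mass requires at least $\asymp 2^{N(1-H_2(\varepsilon))}$ pieces. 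This yields $\mathbb{H}_\varepsilon(T_{\mathrm{av}}^{\mathcal{D}_n}\rho_c)\asymp h_n$, that is, $h_n\in\mathcal{H}(I^{\mathcal{D}},m^\sigma,\mathcal{D},\rho_c)$ for every~$c$, and hence $h_n\in\mathcal{H}(I^{\mathcal{D}},m^\sigma,\mathcal{D})$ by Lemma~\ref{l12}.

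Combining the previous two steps with Lemma~\ref{l15} shows that $h_n$ is scaling for $\operatorname{diag}$ on $(I^{\mathcal{D}}\times I^{\mathbb{N}},\omega^\sigma)$. Finally, by Theorem~\ref{t1} the map $\Psi$ is a $\mathcal{D}$-equivariant homeomorphism, and by the definition of $\mu^\sigma$ in \S\ref{ss3.3} it carries $\omega^\sigma$ to $\mu^\sigma$; since the class of scaling sequences is a metric invariant of an equipped action, $h_n$ is also scaling for the canonical action $\kappa$ of $\mathcal{D}$ on $(\mathcal{T}(\mathrm{OP}),\mu^\sigma)$, which is the assertion of the corollary.
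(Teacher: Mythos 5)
Your proposal is correct and follows essentially the same route as the paper: reduce via Lemma~\ref{l15} to the shift on $(I^{\mathcal{D}},m^\sigma)$, identify that system with the Bernoulli shift of the quotient group $\mathcal{D}/\mathcal{D}^\sigma$, and compute the $\varepsilon$-entropy of the averaged cut semimetric as that of a dyadic Hamming cube of dimension $2^{\sum_{i<n}\sigma_i}$. The only cosmetic difference is that you invoke Lemma~\ref{l12} with the countable separating family of cuts $(\rho_c)_{c\in Q}$, whereas the paper uses a single cut along the first coordinate and notes that it is generating; your version also spells out the standard Hamming-cube entropy bounds that the paper treats as well known.
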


\begin{proof}
The action of~$\mathcal{D}$ on~$(I^{\mathbb{N}},m)$ has an invariant metric,
and therefore we arrive at the conditions of Lemma~\ref{l15}. Thus, the action
of~$\mathcal{D}$ on~$(I^{\mathcal{D}}\times I^{\mathbb{N}}, \omega^\sigma)$ has
the same scaling sequence as the action of~$\mathcal{D}$
on~$(I^{\mathcal{D}}, m^\sigma)$. 
The measure~$m^\sigma$ is the pushforward of the Lebesgue measure
on~$I^{\mathcal{D}/\mathcal{D}^\sigma}$ (see Definition~\ref{d2}), and
therefore the scaling sequence of the action on~$\bigl(I^{\mathcal{D}},
m^\sigma\bigr)$ coincides with the scaling sequence of~the~action
on~$\bigl(I^{\mathcal{D}}, m^\sigma\bigr)$ with the Lebesgue measure. The
subgroup $\mathcal{D}^\sigma$ acts trivially on this space, and therefore the
calculation reduces to that of a~scaling sequence of the
`effectively' acting part of the group $\mathcal{D}$, that~is, of the
complementary subgroup
$$
\overline{\mathcal{D}}^{\,\sigma} = 
\bigl\langle g_i \colon \sigma_i=1, \, i \geq 0 \bigr\rangle.
$$
Let $\rho$ be a~cut along the first coordinate
on~$I^{\overline{\mathcal{D}}^\sigma} =I^{\mathcal{D}/\mathcal{D}^\sigma}$, which
is obviously an admissible semimetric that is generating for the action $\mathcal{D}$.
Then the semimetric space $(I^{\overline{\mathcal{D}}^\sigma},
T^{\mathcal{D}_n}_{\mathrm{av}}\rho)$ with the Lebesgue measure is isomorphic
to the dyadic cube of dimension $|\mathcal{D}_n/
(\mathcal{D}^\sigma\cap\mathcal{D}_n)| = 2^{\sum_{i=0}^{n-1}\sigma_i}$ with
the uniform measure, which implies the result. 
\end{proof}

Arguing in a~similar way one can obtain the following result.

\begin{lemma}
\label{l16} 
Let\/ $H \subset \mathcal{D}$ be a~subgroup. Consider the
measure\/ $m^H$ on\/~$I^{\mathcal{D}}$ which is the pushforward  
of the Lebesgue
measure on~$I^{\mathcal{D}/H}$. Then the sequence $h_n= |\mathcal{D}_n/(H\cap
\mathcal{D}_n)|$ is scaling for the direct product of the actions
of\/~$\mathcal{D}$ (with the equipment\/ $\mathcal{D}_n$, $n \geq 1$) 
on the space\/ $(I^{\mathcal{D}}\times I^{\mathbb{N}}, m^H\times m)$.
\end{lemma}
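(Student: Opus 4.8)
The plan is to follow the proof of Corollary~\ref{c7} almost verbatim, replacing the special subgroup $\mathcal{D}^\sigma$ by the arbitrary subgroup $H$ and exploiting the fact that $\mathcal{D}=\bigoplus\mathbb{Z}/2\mathbb{Z}$ is a vector space over~$\mathbb{F}_2$, so that $H$ is a subspace and all the quotient bookkeeping reduces to linear algebra over~$\mathbb{F}_2$. First I would apply Lemma~\ref{l15}: the action of~$\mathcal{D}$ on~$(I^{\mathbb{N}},m)$ is the odometer, which has an invariant summable admissible metric (discrete spectrum), so the direct product action on $(I^{\mathcal{D}}\times I^{\mathbb{N}}, m^H\times m)$ has the same scaling sequence as the action of~$\mathcal{D}$ on $(I^{\mathcal{D}}, m^H)$. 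Since $m^H$ is by construction the pushforward of the Lebesgue measure under the map of configurations $I^{\mathcal{D}/H}\to I^{\mathcal{D}}$, this latter action is equivariantly isomorphic $\operatorname{mod}0$ to the shift action of~$\mathcal{D}$ on $(I^{\mathcal{D}/H},\operatorname{Leb})$, where $\mathcal{D}$ acts through the quotient map $\pi\colon\mathcal{D}\to\mathcal{D}/H$ (so that $H$ acts trivially). It therefore suffices to compute the scaling sequence of this last action.

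For the test semimetrics I would take, for each coset $c\in\mathcal{D}/H$, the cut $\rho_c$ along the $c$th coordinate, $\rho_c(w,w')=|w(c)-w'(c)|$ on $I^{\mathcal{D}/H}$; these are summable admissible semimetrics, and together they separate points of $I^{\mathcal{D}/H}$. The key computation is the finite averaging of a single cut. Writing $\bar g=\pi(g)$ and noting that $\pi|_{\mathcal{D}_n}\colon\mathcal{D}_n\to\mathcal{D}/H$ has kernel $\mathcal{D}_n\cap H$ and image of size $h_n=|\mathcal{D}_n/(\mathcal{D}_n\cap H)|$, each coset of the image is attained exactly $|\mathcal{D}_n\cap H|$ times, so
$$
T^{\mathcal{D}_n}_{\mathrm{av}}\rho_c(w,w')
=\frac{1}{|\mathcal{D}_n|}\sum_{g\in\mathcal{D}_n}|w(c+\bar g)-w'(c+\bar g)|
=\frac{1}{h_n}\sum_{d\in c+\pi(\mathcal{D}_n)}|w(d)-w'(d)|,
$$
which is $1/h_n$ times the Hamming distance on the $h_n$ independent, uniformly distributed coordinates indexed by the coset $c+\pi(\mathcal{D}_n)$.

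Consequently the semimetric triple $(I^{\mathcal{D}/H},\operatorname{Leb},T^{\mathcal{D}_n}_{\mathrm{av}}\rho_c)$ is isomorphic to the dyadic cube $\{0,1\}^{h_n}$ with the uniform measure and the normalized Hamming metric, exactly as in the proof of Corollary~\ref{c7}. A standard volumetric count (for a fixed small $\varepsilon$, any set of diameter $<\varepsilon$ lies in a Hamming ball of radius $<\varepsilon h_n$, whose cardinality is smaller than $2^{h_n}$ by an exponential factor that remains nontrivial for $\varepsilon$ small) gives $\mathbb{H}_\varepsilon\asymp h_n$, so $h_n\in\mathcal{H}(I^{\mathcal{D}/H},\operatorname{Leb},\mathcal{D},\rho_c)$ for every~$c$. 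Since the cuts $\rho_c$ together separate points, Lemma~\ref{l12} yields $h_n\in\mathcal{H}(I^{\mathcal{D}/H},\operatorname{Leb},\mathcal{D})$; by the reduction of the first paragraph this is also a scaling sequence of the direct product action on $(I^{\mathcal{D}}\times I^{\mathbb{N}}, m^H\times m)$, as claimed.

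The work here is entirely bookkeeping rather than conceptual: the only point needing care in passing from a subgroup generated by standard generators (as in Corollary~\ref{c7}) to an arbitrary subspace~$H$ is to confirm that the effective dimension of the averaged cut equals $|\mathcal{D}_n/(\mathcal{D}_n\cap H)|$ and that the coordinate cuts still separate points. Both follow at once from the surjectivity of~$\pi$ and the first isomorphism theorem applied to $\pi|_{\mathcal{D}_n}$, so no genuinely new difficulty arises beyond the $\mathbb{F}_2$-linear algebra of the images $\pi(\mathcal{D}_n)$ of the finite subgroups~$\mathcal{D}_n$.
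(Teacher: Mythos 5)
Your proof is correct and takes essentially the same route as the paper, which gives no separate argument for Lemma~\ref{l16} but simply says to argue as in Corollary~\ref{c7}: reduce via Lemma~\ref{l15} to the shift action of $\mathcal{D}$ through $\mathcal{D}/H$ on $(I^{\mathcal{D}/H},\operatorname{Leb})$, observe that the $\mathcal{D}_n$-averaging of a coordinate cut is the normalized Hamming metric on a dyadic cube of dimension $h_n=|\mathcal{D}_n/(H\cap\mathcal{D}_n)|$, and read off the $\varepsilon$-entropy. Your use of the whole family of coordinate cuts together with Lemma~\ref{l12}, instead of the single generating cut used in the proof of Corollary~\ref{c7}, is a minor and harmless variation.
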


\subsection{Computing a~scaling sequence of a~filtration}
\label{ss6.4}
In this subsection we compute a~scaling sequence
of the filtration $\zeta$ on~$(I^{\mathcal{D}}\times I^{\mathbb{N}},
\omega^\sigma)$, and hence of the (isomorphic to~$\zeta$) tail filtration $\xi$
on the path space $\mathcal{T}(\mathrm{OP})$ of the graph~$\mathrm{OP}$
of ordered pairs with the measure~$\mu^\sigma$.

\vskip1pt

We first prove a~lemma. Let $m \in \mathbb{Z}$, $m \geq 0$. We note that
there is a~dyadic hierarchy on the subgroup $\mathcal{D}_m$, that
of the cosets of the subgroups~$\mathcal{D}_j$, $0< j <\nobreak m$. The bijections
of the set~$\mathcal{D}_m$ that preserve this hierarchy form a~group isomorphic
to the group~$\mathcal{T}_m$ of automorphisms of a~binary tree of height~$m$.
For an arbitrary finite set~$Q$, the action of~$\mathcal{T}_m$
on~$Q^{\mathcal{D}_m}$ by permuting the coordinates is trivially generated.
This action obviously preserves the normalized Hamming metric~$d_H$
on~$Q^{\mathcal{D}_m}$. Let
$$
\operatorname{dist}_m(w^{(1)},w^{(2)}) = \min_{S \in T_m} d_H(w^{(1)},S w^{(2)}),
\qquad w^{(1)},w^{(2)} \in Q^{\mathcal{D}_m},
$$
be the semimetric on~$Q^{\mathcal{D}_m}$ measuring the distance between orbits
under the action of~$\mathcal{T}_m$.

\begin{lemma}
\label{l17} 
Let\/ $Q$ be a~finite set. Let\/ $r,m \in \mathbb{Z}$, \,$0 \leq r\leq m$. 
Let\/ $G_m$ be the subgroup of\/~$\mathcal{D}_m$ generated by some\/ $r$ elements
of\/~$\{g_0, \dots, g_{m-1}\}$. Let
$$
W = \{w \in Q^{\mathcal{D}_m}\colon w(g+\,\cdot\,) = w(\,\cdot\,) \ \forall
g \in G_m\}
$$
be the set of functions on\/~$\mathcal{D}_m$, with values in\/~$Q$, that are
invariant under the shift of the argument by elements of~$G_m$.
Let\/ $\nu$ be the uniform measure on\/~$W$. Then the\/ $\varepsilon$-entropy 
of the triple\/ $(Q^{\mathcal{D}_m},\operatorname{dist}_m,\nu)$ admits a~two-sided
estimate for any sufficiently small positive\/ $\varepsilon$ by\/~$2^{m-r}$ with
multiplicative constants depending on\/~$\varepsilon$ and\/ $|Q|$.
\end{lemma}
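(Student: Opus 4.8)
The plan is to strip away the symbolic packaging, expose the product structure hidden in $W$, dispatch the upper bound by a one-line comparison, and then concentrate all the effort on the lower bound. First I would observe that the $r$ chosen generators are distinct elements of the basis $\{g_0,\dots,g_{m-1}\}$ of $\mathcal{D}_m\cong(\mathbb{Z}/2\mathbb{Z})^m$, so they are independent, $|G_m|=2^r$, and every $w\in W$ is constant on the $N:=2^{m-r}$ cosets of $G_m$. This identifies $W$ with $Q^{\mathcal{D}_m/G_m}\cong Q^N$ and $\nu$ with the uniform (product) measure. A direct count then shows that for $w^{(1)},w^{(2)}\in W$ the Hamming distance $d_H(w^{(1)},w^{(2)})$ coincides with the \emph{normalized} Hamming distance of the corresponding points of $Q^N$: each coset contributes $2^r$ equal indicators, which cancels the factor $2^{-m}$. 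Thus I am really computing the $\varepsilon$-entropy of $Q^N$ with the uniform measure and the orbit semimetric $\operatorname{dist}_m$, and I want it to be $\asymp N=2^{m-r}$.

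For the upper bound, taking $S=\mathrm{id}$ gives $\operatorname{dist}_m\le d_H$ pointwise, so by the monotonicity in part~1) of Lemma~\ref{l10} one has $\mathbb{H}_\varepsilon(\operatorname{dist}_m)\le\mathbb{H}_\varepsilon(d_H)$. For the Hamming cube $Q^N$ with uniform measure a standard volume/covering estimate produces $\exp(C(\varepsilon,|Q|)N)$ balls of radius $\varepsilon$ covering all but $\varepsilon$ of the mass, whence $\mathbb{H}_\varepsilon(d_H)\le C(\varepsilon,|Q|)\,2^{m-r}$. This already gives the correct order from above.

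The lower bound is the heart of the matter. It suffices to show that every $\operatorname{dist}_m$-ball of radius $\varepsilon$ has $\nu$-measure at most $\exp(-c(\varepsilon,|Q|)N)$; since any set of diameter $<\varepsilon$ sits inside such a ball, covering all but $\varepsilon$ of $\nu$ then forces at least $(1-\varepsilon)\exp(cN)$ pieces and $\mathbb{H}_\varepsilon\ge c'2^{m-r}$. The elementary ingredient is a concentration estimate: for any \emph{fixed} $u\in Q^{\mathcal{D}_m}$ and $w'$ uniform on $W$, the quantity $d_H(w',u)=\frac1N\sum_c\phi_c(w'(c))$ is an average of $N$ independent $[0,1]$-valued variables whose common mean is exactly $1-1/|Q|$, independently of $u$; hence $\nu(\{w':d_H(w',u)<\varepsilon\})\le\exp(-c_0(\varepsilon,|Q|)N)$ for $\varepsilon<1-1/|Q|$, by the Cram\'er bound. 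The trouble is that $\operatorname{dist}_m(w',w)<\varepsilon$ only requires $d_H(w',Sw)<\varepsilon$ for \emph{some} $S\in\mathcal{T}_m$, and a crude union bound over the orbit $\mathcal{T}_m\cdot w$ fails: $|\mathcal{T}_m|=2^{2^m-1}$ overwhelms $\exp(c_0N)$ as soon as $r\ge1$ (and already in the borderline case $|Q|=2$, $r=0$), while the $G_m$-invariance of $w$ makes the orbit size genuinely hard to pin down.

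The way I would resolve this, and what I expect to be the main obstacle, is to exploit the recursive structure of the orbit semimetric,
\[
\operatorname{dist}_m(w',w)=\tfrac12\min\bigl\{\operatorname{dist}_{m-1}(w'_L,w_L)+\operatorname{dist}_{m-1}(w'_R,w_R),\ \operatorname{dist}_{m-1}(w'_L,w_R)+\operatorname{dist}_{m-1}(w'_R,w_L)\bigr\},
\]
where $w_L,w_R$ are the restrictions to the two level-$(m-1)$ subtrees. Setting $P_m(\varepsilon)=\sup_w\nu(\{w':\operatorname{dist}_m(w',w)<\varepsilon\})$, this recursion distinguishes two regimes governed by the bookkeeping of which $g_i$ lie in $G_m$: at a \emph{collapsed} split ($g_{m-1}\in G_m$) invariance forces $w_L=w_R$ and $w'_L=w'_R$, so $P_m(\varepsilon)=P_{m-1}(\varepsilon)$ with no loss; at a \emph{free} split the two halves are independent and the measure behaves like $P_{m-1}(\,\cdot\,)^2$. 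Iterating over the $m-r$ free levels and the $r$ collapsed levels, starting from $P_0=1/|Q|$, formally yields the exponent $2^{m-r}\ln|Q|$, i.e.\ exactly the order $2^{m-r}$. The delicate point is that a \emph{per-node} recursion degrades the scale ($\varepsilon\mapsto2\varepsilon$ at each free level), which blows up after $\log(1/\varepsilon)$ steps; the fix is to process a whole level at once, replacing the node-by-node doubling by a single large-deviation estimate across the $2^{j}$ independent subtrees at that level (so that ``most'' subtrees must match, rather than ``both halves'' at every node). Carrying out this level-wise concentration while keeping track of the free/collapsed pattern so that precisely the $2^{m-r}$ free coordinates are counted is where the real work lies; once that scheme is set up, the concentration inputs are routine.
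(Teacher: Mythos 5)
Your reduction of $W$ to $Q^N$ (with $N=2^{m-r}$) carrying the uniform product measure, and your upper bound via $\operatorname{dist}_m\le d_H$ and the Hamming cube, both match the paper; so does your overall plan for the lower bound (show that every $\operatorname{dist}_m$-ball of radius $\varepsilon$ has measure at most $e^{-cN}$). But the lower bound itself is not proved. You correctly observe that the per-node recursion $P_m(\varepsilon)\le 2P_{m-1}(2\varepsilon)^2$ destroys the scale after about $\log_2(1/\varepsilon)$ free levels, you propose to repair this by a level-wise large-deviation argument across the $2^j$ subtrees of each level, and you then explicitly defer that argument (``where the real work lies''). Since this is, by your own account and the paper's, the heart of the matter, the proposal has a genuine gap at exactly the decisive step.

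The difficulty is also largely self-inflicted, because you stop the reduction halfway: you identify $W$ with $Q^N$ but keep minimizing over all of $\mathcal{T}_m$, which is why the orbit of a $G_m$-invariant configuration is ``hard to pin down'' and the union bound over $|\mathcal{T}_m|=2^{2^m-1}$ loses to $e^{c_0N}$ once $r\ge1$. The paper instead notes that the whole \emph{triple} $(Q^{\mathcal{D}_m},\operatorname{dist}_m,\nu)$, after quotienting by $G_m$, is isomorphic to $(Q^{\mathcal{D}_{m-r}},\operatorname{dist}_{m-r},\text{uniform})$: the coset hierarchy descends with the $r$ collapsed levels deleted, and for $w,w'\in W$ the minimum over $\mathcal{T}_m$ is attained inside $W$. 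So one may assume $r=0$ from the start, and then the ``crude'' bound you dismissed essentially works, provided you count points instead of invoking concentration: a $\operatorname{dist}_{m-r}$-ball of radius $\varepsilon$ about $w$ is the union, over the $\mathcal{T}_{m-r}$-orbit of $w$, of Hamming $\varepsilon$-balls, hence contains at most $M\cdot C_1(\varepsilon,|Q|)^{N}$ points, where $M$ is the maximal orbit size and $C_1\to1$ as $\varepsilon\to0$. For $|Q|\ge3$ the trivial bound $M\le|\mathcal{T}_{m-r}|=2^{N-1}$ already beats $|Q|^{N}$; in the borderline case $|Q|=2$ the elementary recursion $M_{k+1}\le 2M_k^2$ with $M_2=4$ gives $M\le 2^{3N/4}$, and $2^{3N/4}C_1^{N}/2^{N}=(2^{-1/4}C_1)^{N}$ is still exponentially small for small $\varepsilon$. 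This sidesteps the $\varepsilon\mapsto2\varepsilon$ degradation entirely, since the orbit-size recursion involves no metric and the scale $\varepsilon$ is never iterated.
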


\begin{proof}
The measure $\nu$ is concentrated on~$W$, which is the set of configurations
that are invariant under shifts of the argument by elements of~$G_m$. 
The set~$W$ is not invariant under the action of~$\mathcal{T}_m$, 
and therefore the orbit of a~function $w\in\nobreak W$ under the
action of~$\mathcal{T}_m$ can contain functions which are not in~$W$.
Nevertheless, the minimum of the distances in the Hamming metric between such
orbits is achieved at elements of~$W$. Thus, we can pass to the quotient by the
action of~$G_m$, under which the triple $(Q^{\mathcal{D}_m},\operatorname{dist}_m,\nu)$
passes to an isomorphic triple. Hence, the $\varepsilon$-entropy of this
triple is a~function of~$m-r$. We denote the $\varepsilon$-entropy under 
consideration by~$h_{m-r}(\varepsilon)=h_{m-r}(\varepsilon, |Q|)$.

\vskip1pt

In what follows, we may assume that $r=\nobreak0$. Then $W=Q^{\mathcal{D}_m}$. 
We seek two-\allowbreak sided bounds for the number~$h_m(\varepsilon)$. The upper 
bound reduces to the standard entropy estimate. Indeed, this number does not 
exceed the $\varepsilon$-entropy of the uniform measure on the metric space
$(Q^{\mathcal{D}_m}, d_H)$. This space is a~hypercube (with side~$|Q|$)
of dimension~$2^m$ whose $\varepsilon$-entropy is well known and asymptotically
proportional to the dimension of the cube, that is, to~$2^m$.

The lower bound is found in a~somewhat more subtle way and requires an analysis
of the cases $|Q|=2$ and $|Q|>2$. In both cases, we estimate the size $M_m$
of a~maximal orbit in the space $Q^{\mathcal{D}_m}$ under the action of~$\mathcal{T}_m$. 
We shall find a~constant $C_1=C_1(\varepsilon,|Q|)$ such that
the ball of radius~$\varepsilon$ in the metric space $(Q^{\mathcal{D}_m}, d_H)$
contains at most $C_1(\varepsilon,|Q|)^{2^m}$ points, and $C_1(\varepsilon,|Q|)
\to 1$ as~$\varepsilon \to 0$ for a~fixed $|Q|$. The $\varepsilon$-neighborhood
of every orbit contains at most $C_1(\varepsilon,|Q|)^{2^m}M_m$ points, and
therefore the $\varepsilon$-entropy of the uniform measure admits the
lower bound
\begin{equation}
\label{eq24}
h_m(\varepsilon) \geq
\log\biggl((1-\varepsilon)\frac{|Q|^{2^m}}{C_1(\varepsilon,|Q|)^{2^{m}}M_m }\biggr).
\end{equation}

When $|Q|>2$ it suffices to note that $M_m\leq |\mathcal{T}_m| = 2^{2^{m}-1}$.
Therefore, by inequality~\eqref{eq24} we have
$$
h_m(\varepsilon) \geq2^{m} \log\biggl(\frac{|Q|}{2
C_1(\varepsilon,|Q|)}\biggr)+\log(1-\varepsilon)\geq 2^m C_2(\varepsilon,|Q|)
$$
for sufficiently small $\varepsilon$ and some positive constant
$C_2(\varepsilon,|Q|)$.

\vskip1pt

When~$|Q|=2$, the number $M_m$ can be estimated by induction. Obviously, the
inequality $M_{m+1}\leq 2 M_m^2$ holds and 
we have $M_2 = 4$, and
therefore $M_m \leq\nobreak 2^{3\cdot 2^{m-2}-1}$\!. Applying this inequality 
together with~\eqref{eq24}, we obtain the bound
$$
h_m(\varepsilon) \geq 2^{m}
\log\biggl(\frac{2^{1/4}}{C_1(\varepsilon,2)}\biggr)+\log(1-\varepsilon)\geq
2^m C_2(\varepsilon)
$$
for sufficiently small $\varepsilon$ and some positive constant
$C_2(\varepsilon)$.

Summarizing what has been said, we obtain that 
$h_m(\varepsilon) \asymp 2^{m}$, \,$m\to\nobreak\infty$. 
\end{proof}

\begin{theorem}
\label{t9}
The sequence\/ $h=(h_n)$, \,$h_n=2^{\sum_{i=0}^{n-1} \sigma_i}$, \,$n \geq 1$, is
a~scaling sequence of the filtration\/ $\zeta = (\zeta_n)_{n \geq 0}$ on the space\/
$I^{\mathcal{D}}\times I^{\mathbb{N}}$ with the measure\/ $\omega^\sigma$ and
also~of the filtration\/ $\xi=(\xi_n)_{n\geq 0}$ on the space\/
$\mathcal{T}(\mathrm{OP})$ with the measure $\mu^\sigma$, which is isomorphic
to~$\zeta$.
\end{theorem}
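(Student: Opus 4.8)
The plan is to prove the statement for the filtration $\zeta$ on $(I^{\mathcal{D}}\times I^{\mathbb{N}},\omega^\sigma)$; the case of $\xi$ on $(\mathcal{T}(\mathrm{OP}),\mu^\sigma)$ then follows at once, because the homeomorphism $\Psi$ of Theorem~\ref{t1} is $\mathcal{D}$-equivariant and carries $\mu^\sigma$ to $\omega^\sigma$, hence carries the $\mathcal{D}_n$-orbit partitions $\xi_n$ to $\zeta_n$, and the class of scaling sequences of a dyadic filtration is an isomorphism invariant. To compute the scaling sequence of $\zeta$ I would invoke Assertion~\ref{st2}. Fix an enumeration $g^{(0)},g^{(1)},\dots$ of $\mathcal{D}$ and set $\rho_k=\sum_{i=0}^{k}\rho_{g^{(i)}}+\sum_{j=1}^{k}2^{-j}s_j$, where $\rho_g(f_1,f_2)=|f_1(g)-f_2(g)|$ is the cut along the configuration coordinate $g$ and $s_j$ is the cut along the coordinate $\alpha_j$. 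This is a non-decreasing sequence of summable admissible semimetrics whose union separates the points of $I^{\mathcal{D}}\times I^{\mathbb{N}}$, so by Assertion~\ref{st2} it suffices to show that each fixed $\rho_k$ is scaling for $\zeta$, that is, $\mathbb{H}_\varepsilon(X/\zeta_n,\omega^\sigma/\zeta_n,\mathcal{K}_n[\rho_k])\asymp h_n$.

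The core is the identification of the Kantorovich iteration of a single configuration cut with the orbit-distance triple of Lemma~\ref{l17}. Choosing representatives of the $\zeta_n$-orbits normalized so that the first $n$ coordinates of $\alpha$ vanish, the $2^n$ points of an orbit are indexed by $g\in\mathcal{D}_n$, the dyadic hierarchy on the orbit coincides with the chain $\mathcal{D}_0\subset\dots\subset\mathcal{D}_n$, and the configuration value at a fixed $g_0$ along the point indexed by $g$ equals $w(g_0+g)$. Hence formula~\eqref{eq22} gives $\mathcal{K}_n[\rho_{g_0}](c_1,c_2)=\operatorname{dist}_n\bigl(w_1(g_0+\,\cdot\,)|_{\mathcal{D}_n},w_2(g_0+\,\cdot\,)|_{\mathcal{D}_n}\bigr)$, and (using the shift-invariance of $m^\sigma$) the restriction map pushes $\omega^\sigma/\zeta_n$ forward to the uniform measure on the configurations on $\mathcal{D}_n$ invariant under $G_n=\mathcal{D}^\sigma\cap\mathcal{D}_n$. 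Since $G_n$ is generated by the $r=\#\{0\le i<n:\sigma_i=0\}$ elements $g_i$ with $\sigma_i=0$, Lemma~\ref{l17} with $Q=I$ gives $\mathbb{H}_\varepsilon(\,\cdot\,,\mathcal{K}_n[\rho_{g_0}])\asymp 2^{n-r}=2^{\sum_{i=0}^{n-1}\sigma_i}=h_n$. As $\rho_k$ dominates the single cut $\rho_{g^{(0)}}$, monotonicity of $\mathcal{K}_n$ and of the $\varepsilon$-entropy yields the lower bound $\liminf_n\mathbb{H}_\varepsilon(\,\cdot\,,\mathcal{K}_n[\rho_k])/h_n>0$.

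For the upper bound the point is that the finitely many coordinate cuts $s_j$, $1\le j\le k$, contribute nothing to the filtration entropy: with the normalization above, the coordinate $\alpha_j$ of the point indexed by $g$ is exactly the $j$-th address coordinate of $g$ in the tree. Restricting the infimum in~\eqref{eq22} to those $S\in\mathcal{T}_n$ that fix the lowest $k$ levels of the tree (equivalently, preserve the address coordinates $1,\dots,k$) annihilates the entire $\alpha$-part of $\rho_k$, so $\mathcal{K}_n[\rho_k]$ is bounded above by the Kantorovich iteration of its $w$-part over this restricted group. That restricted group is isomorphic to $\mathcal{T}_{n-k}$ acting by rigidly permuting the $2^{n-k}$ blocks of $\zeta_k$, and the resulting semimetric is again an orbit distance of the type in Lemma~\ref{l17}, now at height $n-k$, with finite alphabet $Q$ recording the relevant $w$-values of a block and with invariance group generated by $r'=\#\{k\le i<n:\sigma_i=0\}$ of its generators. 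Lemma~\ref{l17} bounds its $\varepsilon$-entropy by $2^{(n-k)-r'}=2^{\sum_{i=k}^{n-1}\sigma_i}$, which is $\asymp h_n$ up to the factor $2^{\sum_{i=0}^{k-1}\sigma_i}$ depending only on $k$. Combined with the lower bound this gives $\mathbb{H}_\varepsilon(\,\cdot\,,\mathcal{K}_n[\rho_k])\asymp h_n$ for every $k$.

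The main obstacle is precisely this last step. A priori the matching $S$ that is optimal for the configuration cuts may scramble the address coordinates and thereby make the $\alpha$-cuts expensive, whereas the matching $S=\mathrm{id}$ that kills the $\alpha$-cuts need not be good for the $w$-part; the two demands are genuinely coupled because a single $S$ must serve both. Reconciling them by passing to the subgroup fixing the bottom $k$ levels, and verifying that the induced block problem is still governed by Lemma~\ref{l17} (so that collapsing $k$ levels only alters the answer by the $k$-dependent constant $2^{\sum_{i<k}\sigma_i}$), is the delicate part; everything else reduces to monotonicity of $\mathcal{K}_n$ and the entropy inequalities of Lemma~\ref{l10}. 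Once $h$ is scaling for each $\rho_k$, Assertion~\ref{st2} shows $h$ is a scaling sequence of $\zeta$, and transport along $\Psi$ completes the proof for $\xi$ on $(\mathcal{T}(\mathrm{OP}),\mu^\sigma)$.
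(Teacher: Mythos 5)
Your proposal is correct and follows essentially the same route as the paper's proof: reduce to the filtration $\zeta$ via the equivariant map $\Psi$, invoke Assertion~\ref{st2} for a family of finite-coordinate cut-type semimetrics, normalize the $\zeta_n$-representatives so that the $\alpha$-part records the tree address of $g\in\mathcal{D}_n$ and forces the relevant matchings $S$ to satisfy $Sg-g\in\mathcal{D}_{n,k}$, and thereby identify $\mathcal{K}_n[\rho_k]$ with the orbit Hamming distance of Lemma~\ref{l17}, whose $\varepsilon$-entropy is $\asymp 2^{\sum_i\sigma_i}$. The only differences are technical: the paper takes $\rho_k$ to be the block indicator semimetric, so that $\mathcal{K}_n[\rho_k]$ equals the orbit distance on $Q_k^{\mathcal{D}_{n,k}}$ exactly and both bounds come out at once, whereas you use sums of single cuts and split the estimate, obtaining the lower bound from one configuration cut (alphabet $Q=I$) and the upper bound by restricting the infimum to coset-preserving matchings.
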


\begin{proof}
Let us compute the asymptotic behaviour of the entropies for the following
sequence of semimetrics $\rho_k$, $k\geq 1$, on~$I^{\mathcal{D}}\times
I^{\mathbb{N}}$. Let $w^{(1)},w^{(2)} \in I^{\mathcal{D}}$ and
$\alpha^{(1)},\alpha^{(2)} \in\nobreak I^{\mathbb{N}}$. We write
\begin{align*}
&\rho_k\bigl((w^{(1)}, \alpha^{(1)}),(w^{(2)}, \alpha^{(2)})\bigr)
\\[4pt]
&\qquad=\begin{cases} 0& \text{ if }
w^{(1)}|_{\mathcal{D}_k}=w^{(2)}|_{\mathcal{D}_k} \text{ and }
\alpha^{(1)}_i=\alpha^{(2)}_i \text{ for } i=1,\dots, k,
\\
1& \text{ otherwise.}
\end{cases}
\end{align*}

It is clear that this sequence of semimetrics is monotone increasing and
separates the points of the space by~$I^{\mathcal{D}}\times
I^{\mathbb{N}}$. By Assertion~\ref{st2}, it suffices to show that the sequence
$h$ is scaling for every semimetric $\rho_k$, $k \geq 1$.

\vskip1.5pt

Let $k$ be a~fixed positive integer and let $n>k$. Our objective is to understand
the structure of the semimetric $\mathcal{K}_n[\rho_k]$ on the set of elements
of the partition~$\zeta_n$.

\vskip1.5pt

We define a~map~$\phi_n \colon I^{\mathcal{D}}\times I^{\mathbb{N}}\to
I^{\mathcal{D}_n}$ as follows. Every element of the partition $\zeta_n$ is the
orbit of some point under the action of the group $\mathcal{D}_n$, and this
element contains a~unique pair $(w,\alpha)$, $w \in I^{\mathcal{D}}$, $\alpha
\in I^{\mathbb{N}}$, for which $\alpha_i=\nobreak0$ for $i=1,\dots, n$. 
We say that this pair is the \textit{representative} of this element of 
the partition and set $\phi_n=w|_{\mathcal{D}_n}$ on this orbit.

Let $(w^{(1)},\alpha^{(1)})$ and $(w^{(2)},\alpha^{(2)})$ be the representatives
of elements $c_1$ and $c_2$, respectively, of the partition $\zeta_n$. Then
by~\eqref{eq22} we have (recalling that the group~$\mathcal{T}_n$ acts
on~$\mathcal{D}_n$ and preserves the hierarchy and that~$\tau$ embeds~$\mathcal{D}$
in~$I^{\mathbb{N}}$)
\begin{align}
\mathcal{K}_n[\rho_k](c_1,c_2)
&=\min_{S\in \mathcal{T}_n}\bigg\{\frac{1}{2^n}\sum_{g\in
\mathcal{D}_n}\rho_k\Bigl(\bigl(w^{(1)}(g+\,\cdot\,),
\tau(g)+\alpha^{(1)}\bigr),
\nonumber
\\
&\qquad\qquad\qquad\bigl(w^{(2)}(Sg+\,\cdot\,),
\tau(Sg)+\alpha^{(2)}\bigr)\Bigr) \bigg\}.
\label{eq25}
\end{align}

Let $\mathcal{D}_{n,k}$ be the subgroup of~$\mathcal{D}_n$ generated by the
elements $g_{k+1},\dots, g_n$, which is complementary to~$\mathcal{D}_k$.
The minimum in expression~\eqref{eq25} is obviously achieved on the
transformations $S \in \mathcal{T}_n$ for which $Sg-g \in \mathcal{D}_{n,k}$
for every $g \in \mathcal{D}_n$. Transformations of this kind can be represented
in the form $S(g+\widetilde g)=g+\widetilde S \widetilde g$, where $g \in
\mathcal{D}_k$, \,$\widetilde g \in \mathcal{D}_{n,k}$, and the transformation
$\widetilde S$ acts on~$\mathcal{D}_{n,k}$, preserving the hierarchy.
Therefore, we can rewrite formula~\eqref{eq25} in the form
\begin{align}
\mathcal{K}_n[\rho_k](c_1,c_2)
&=\min_{\widetilde S}\biggl\{\frac{1}{2^n}
\sum_{\substack{g\in \mathcal{D}_k\\ \widetilde g \in \mathcal{D}_{n,k}}}
\rho_k\Bigl(\bigl(w^{(1)}(g+\widetilde g+\,\cdot\,),\tau(g+\widetilde
g)+\alpha^{(1)}\bigr),
\nonumber
\\
&\qquad\qquad
\bigl(w^{(2)}(g+\widetilde S\widetilde
g+\,\cdot\,),\tau(g+\widetilde S \widetilde
g)+\alpha^{(2)}\bigr)\Bigr)\biggr\}
\nonumber
\\[3pt]
&=\min_{\widetilde S} \biggl\{\frac{1}{2^{n-k}}\Bigl|\bigl\{\widetilde g \in
\mathcal{D}_{n,k} \colon w^{(1)}(\widetilde
g +\,\cdot\,)\big|_{\mathcal{D}_k}\ne
w^{(2)}(\widetilde S \widetilde
g +\,\cdot\,)\big|_{\mathcal{D}_k}\bigr\}\Bigr| \bigg\}.
\label{eq26}
\end{align}

The restriction of a~configuration $w \in I^{\mathcal{D}}$ to~$\mathcal{D}_n$
can be viewed as an element of the space $Q^{\mathcal{D}_{n,k}}_k$ with
$Q_k=I^{\mathcal{D}_k}$. We equip $Q_k^{\mathcal{D}_{n,k}}$ with the
Hamming metric~$d_H$. The group $\mathcal{T}_{n-k}$ acts on this space,
preserving the hierarchy. The last expression in formula~\eqref{eq26} is
precisely the distance between the orbits of this action that contain
$w^{(1)}|_{\mathcal{D}_n}=\phi_n(w^{(1)},\alpha^{(1)})$ and
$w^{(2)}|_{\mathcal{D}_n}=\phi_n(w^{(2)},\alpha^{(2)})$. In other words, 
the~map~$\phi_n$ is an isometry taking the semimetric space
$\bigl((I^{\mathcal{D}}\times I^{\mathbb{N}})/\zeta_n,
\mathcal{K}_n[\rho_k]\bigr)$ to~the metric space
$(Q_k^{\mathcal{D}_{n,k}},d_H)$.

\vskip1pt

Let $\nu_n$ be the pushforward  
of the measure $\omega^\sigma$ under the map
$\phi_n$. This is a~measure on~$I^{\mathcal{D}_n}=Q_k^{\mathcal{D}_{n,k}}$
concentrated and uniform on the set of configurations $w \in I^{\mathcal{D}_n}$
that satisfy the relation $w(g_i+\,\cdot\,)=w(\,\cdot\,)$ for $0\leq i\leq
n-1$, \,$\sigma_i=\nobreak0$. Let $Q = \{w \in I^{\mathcal{D}_k}\colon
w(\,\cdot\,+g_i)=w(\,\cdot\,) \text{ for } 0 \leq i\leq k-1,\, \sigma_i=0\}$.

\vskip1.5pt

Take $m=n-k$. The group $\mathcal{D}_{n,k}$ is isomorphic to the group
$\mathcal{D}_m$ (by a~shift of the indexing of the generators). Let $G_m$ be the
subgroup of~$\mathcal{D}_{n,k}$ generated by the elements $g_i$, $k\leq i \leq
n$, for which $\sigma_i=\nobreak0$. We arrive at the conditions of Lemma~\ref{l17},
where $\nu_n$ is the pushforward of~$\omega^\sigma$
under~$\phi_n$ and~$W$ is its support. Applying the lemma,
we find the asymptotic behaviour of the $\varepsilon$-entropy of the quotient
of~$\omega^\sigma$ with respect to the partition~$\zeta_n$ on the
space $(I^{\mathcal{D}}\times I^{\mathbb{N}})/\zeta_n$ with the
semimetric~$\mathcal{K}_n[\rho_k]$ constructed from the semimetric $\rho_k$:
$$
\mathbb{H}_\varepsilon\bigl((I^{\mathcal{D}}\times I^{\mathbb{N}})/\zeta_n,
\omega^\sigma / \zeta_n, \mathcal{K}_n[\rho_k]\bigr) =
\mathbb{H}_\varepsilon\bigl(Q_k^{\mathcal{D}_{n,k}},
\nu_n,d_H\bigr) \asymp 2^{\sum_{i=k}^n \sigma_i} \asymp h_{n}, \quad
n\to\infty.
$$

Thus, we have proved that the sequence $h$ is scaling for the semimetric
$\rho_k$, $k \geq\nobreak 1$, and for the filtration $\zeta$ on the space
$(I^{\mathcal{D}}\times I^{\mathbb{N}}, \omega^\sigma)$. By
Assertion~\ref{st2}, this means that $h$ is a~scaling sequence of the
filtration $\zeta$. This completes the proof of the theorem. 
\end{proof}

\section{Conclusion}
\label{s7}

\vskip1pt

1.~The universal model of an adic action of the group $\mathbb Z$ 
(and also of the infinite sum of groups of order~$2$) on the path space of the 
graph~$\mathrm{OP}$, which is suggested in the paper, can be compared with the
generally accepted and well-known symbolic model of a~group action, which is
also universal. The advantage of the model suggested here is that it already
has a~canonical periodic approximation of the action, which is absent in the
symbolic model.

\vskip1.5pt

2.~The universal model assumes a~description of all central measures on the
path space of the graph. This description can be found in the paper, and it turns
out to be highly visible modulo the description of the invariant measures in the
symbolic model.

\vskip1.5pt

3.~The scale of intermediate arbitrary sublinear asymptotic behaviours of the
scaled entropy (the existence of these asymptotic behaviours has only been discovered
recently) is associated with the scaled entropy of the filtrations on the path
space of the graph, that is, finally, with the rate of periodic approximations
of the adic action of groups. This coincidence reveals a~deep connection between
the theories of filtrations and periodic approximations. This connection will be
investigated in future papers.

\end{document}